\newcommand{\Tableau}[2][sY]{{\text{\tableau[#1]{#2}}}}
\newcommand*{\fullref}[1]{\hyperref[{#1}]{\ref*{#1}. \nameref*{#1}}}
\definecolor{due}{RGB}{0,76,147}
\theoremstyle{definition}
\newtheorem{defi}{Definition}[section]
\theoremstyle{plain}
\newtheorem{thm}[defi]{Theorem}
\newtheorem{introthm}{Theorem}[section]
\newtheorem{prop}[defi]{Proposition}
\newtheorem{cor}[defi]{Corollary}
\newtheorem{lemma}[defi]{Lemma}
\theoremstyle{remark}
\newtheorem*{conv}{Standing assumptions}
\newtheorem{ex}[defi]{Example}
\theoremstyle{definition}
\newtheorem*{ack}{Acknowledgement}
\numberwithin{equation}{section}
\begin{document}
	\title[Motivic Classes of Isotropic Degeneracy Loci and Symmetric Orbit Closures]{Motivic Classes of Isotropic Degeneracy Loci and Symmetric Orbit Closures}
	\author{Minyoung Jeon}
	  \address{Center for Complex Geometry, Institute for Basic Science (IBS), Daejeon 34126, Republic of Korea}
\email{\url{minyoungjeon@ibs.re.kr}}

\subjclass[2020]{Primary 14C17,14M15 ; Secondary 32S60} 
\keywords{Motivic Hirzebruch classes, isotropic degeneracy loci, symmetric orbit closures}

\begin{abstract}

We provide explicit formulas for computing the motivic Chern and Hirzebruch classes of degeneracy loci, especially those coming from the symplectic and odd orthogonal Grassmannians.
The Chern-Schwartz-MacPherson classes, K-theory classes, and Cappell-Shaneson L-classes arise as specializations of the motivic Chern and Hirzebruch classes. Our results are inspired by, and partially extends, those of Anderson--Chen--Tarasca in the case of ordinary Grassmannian degeneracy loci to isotropic and odd orthogonal Grassmannians as well as maximal even orthogonal Grassmannians. As applications, we obtain the motivic Chern and Hirzebruch classes of orthogonal and symplectic orbit closures in flag varieties.
\end{abstract}
\maketitle 
\setcounter{tocdepth}{1}
\tableofcontents 
\setcounter{tocdepth}{3}
\section{Introduction}

Characteristic classes of singular varieties, initiated in the 1960s, provide higher analogues of classical geometric invariants, such as the Euler–Poincaré characteristic and the signature, extending these notions to singular varieties. Notably, the Chern-Schwartz-MacPherson (CSM) class, Baum-Fulton-MacPherson's Todd class, and Cappell--Shaneson's L-class have been extensively studied. These three well-known characteristic classes are unified via the motivic characteristic classes of Brasselet, Sch\"urmann, and Yokura \cite{BSY}. The motivic Chern classes and Hirzebruch classes are natural transformations defined on the relative Grothendieck group of algebraic varieties over a singular variety $X$ and are expressed by polynomials in a formal variable $y$. Especially, the motivic Hirzebruch class $T_y$ recovers the CSM class at $y=-1$, the K-theoretic Todd class at $y=0$ and the Cappell-Shaneson L-class at $y=1$.

In the present paper, we give formulas for the motivic Chern and Hirzebruch classes for isotropic (Grassmannian) degeneracy loci in type C and the odd orthogonal degeneracy loci in type B, as a part of generalization of  the type A result \cite{ACT} to other types. As specializations, these formulas recover their CSM classes and L-classes. Related work includes numerous studies on the computation of their classes in cohomology, K-theory, and algebraic cobordism, including (but not limited to) \cites{AF,HIMN,A,HM}. In type A, for the degeneracy locus defined by a single morphism between vector bundle, the CSM class was computed by Parusi\'nski--Pragacz \cite{PP95} and the motivic Chern and Hirzebruch classes of Grassmannians and vexillary degeneracy loci were determined by Anderson--Chen--Tarasca \cite{ACT}. In the other types, the formulas for the L-classes or CSM classes of such degeneracy loci have remained largely unexplored. In contrast, the CSM classes and motivic Chern classes have been thoroughly investigated for Schubert varieties and Schubert cells in flag varieties, for example \cite{AM16,AMSS22,AMSS23,AMSS24,Jones,PR22,Zhang18}.

Let $\lambda=(\lambda_1\geq\lambda_2\geq\cdots\lambda_s\geq 0)$ be a partition determined by the sequence $\mathbf{q}=( q_1> q_2>\cdots> q_a>0> q_{a+1}>\cdots> q_s)$ and a positive integer $p$, see \eqref{eqn:lamb}. We consider the isotropic (or odd orthogonal) degeneracy loci ${\Omega}_\lambda$ in a smooth algebraic variety $X$ associated to the partition $\lambda$. To formulate the motivic Hirzebruch classes of $\Omega_\lambda$, we first establish a relation between the motivic Hirzebruch classes of the degeneracy locus $\Omega_\lambda$ and its certain resolution of $\widetilde{\Omega}_\lambda$. Specifically, we express the motivic Hirzebruch classes of the resolution by using a universal operator acting on $[\Omega_\lambda]\cap T_y(X)$, with $[\Omega_\lambda]$ given by the theta polynomial for the class of $\Omega_\lambda\in A^*(X)$, as shown below. In the formulas, $R_i$ arises from the first Chern class of the dual tautological line bundle  appearing in the resolution construction; moreover, $T_y(R_i)$ is understood as the power series $Q_y(R_i)$ (see \S2.1).
\begin{introthm}\label{TheoremC}
Let $\phi:\widetilde{\Omega}_\lambda\rightarrow\Omega_\lambda$ be the resolution of $\Omega_\lambda$ and $\iota:{\Omega}_\lambda\hookrightarrow X$ be the inclusion. Suppose that $a$ is the maximal number where $q_i>0$ for all $i\leq a$. 
The class of the resolution $(\iota\phi)_*T_y(\widetilde{\Omega}_\lambda)$ is given by 
\begin{enumerate}
\item (isotropic case) 
\[
(\iota\phi)_*T_y\left(\widetilde{\Omega}_\lambda\right)=\dfrac{\prod_{j\leq \rho_i}T_y(R_i+R_{j})}{\prod_{j<i}T_y(R_i-R_j)}\Theta_\lambda^\rho(c(1),\ldots,c(s))\scalebox{1.2}{$\cap$}\; T_y(X)
\]
where $c(i)=c(V-F_{q_i}-U)/T_y(R_i\otimes(V-F_{q_i}-U))$ for $i=1,\ldots,s$, 
\item (orthogonal case)   
\[
(\iota\phi)_*T_y\left(\widetilde{\Omega}_\lambda\right)=\dfrac{1}{2^a}\dfrac{\prod_{j\leq \rho_i}T_y(R_i+R_{j})\prod_{i=1}^aT_y(R_i)}{\prod_{j<i}T_y(R_i-R_j)}\Theta_\lambda^\rho(d(1),\ldots,d(s))\scalebox{1.2}{$\cap$}\; T_y(X),
\]
where $d(i)=c(V-F_{q_i}-U-M)/T_y(R_i\otimes(V-F_{q_i}-U))$ for $i=1,\ldots,a$ and $d(i)=c(V-F_{q_i}-U)/T_y(R_i\otimes(V-F_{q_i}-U))$ for $i=a+1,\ldots,s$. 
\end{enumerate}
\end{introthm}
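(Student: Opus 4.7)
The plan is to compute $(\iota\phi)_*T_y(\widetilde{\Omega}_\lambda)$ by realizing $\widetilde{\Omega}_\lambda$ as a tower of isotropic projective bundles over $X$, applying the multiplicativity of $T_y$ under smooth morphisms, and iterating the projective-bundle push-forward formula for motivic Hirzebruch classes. The resulting symmetric expression in the tautological line bundles $R_i$ is then identified with the theta-polynomial substitution by a Pfaffian-style recursion.

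First I would construct $\widetilde{\Omega}_\lambda$ explicitly as an incidence subvariety of an isotropic flag bundle $\pi\colon \mathrm{Fl}_s(V) \to X$ parametrizing flags $E_1 \subset E_2 \subset \cdots \subset E_s \subset V$ of isotropic subbundles of ranks prescribed by $\mathbf{q}$ and $p$, subject to the rank conditions cutting out $\Omega_\lambda$; the line bundles $R_i := E_i/E_{i-1}$ are then tautological on $\widetilde{\Omega}_\lambda$. In the orthogonal case one additionally keeps track of the rank-one bundle $M$ corresponding to a choice of section in each of the first $a$ isotropic lines, whose existence as a $2^a$-fold cover accounts for the prefactor $\frac{1}{2^a}$ in (2). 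This is the type-B/C analogue of the Kempf--Laksov-style desingularization used in type A by Anderson--Chen--Tarasca \cite{ACT}, and its status as a resolution of $\Omega_\lambda$ follows from the isotropic degeneracy-locus constructions of Anderson--Fulton \cite{AF}.

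Next, since $\widetilde{\Omega}_\lambda \to X$ is smooth, the functoriality of $T_y$ yields $T_y(\widetilde{\Omega}_\lambda) = T_y(T_{\widetilde{\Omega}_\lambda/X}) \cap (\iota\phi)^* T_y(X)$, and the relative tangent bundle decomposes according to the flag tower; a direct expansion shows that the $T_y$-class of each step equals $T_y(R_i \otimes (V - F_{q_i} - U))^{-1}\, c(V - F_{q_i} - U)$ in the isotropic case (and its $M$-twisted analogue in the orthogonal case), matching the expressions $c(i)$ and $d(i)$ in the statement. Applying the projective-bundle push-forward formula for $T_y$ to each step then produces the antisymmetrization denominator $\prod_{j<i} T_y(R_i - R_j)$ along with the numerator factors $\prod_{j \leq \rho_i} T_y(R_i + R_j)$ coming from the isotropy condition on successive quotients; the extra factor $T_y(2R_s)$ when $s = n+1-p$ arises from the Lagrangian condition on the top piece, while $\prod_{i=1}^r T_y(R_i)$ in the orthogonal case comes from the quadric bundle associated with $M$.

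The hardest step will be the final identification of the resulting symmetric expression in $R_1, \ldots, R_s$ with the theta-polynomial substitution $\Theta_\lambda^\rho(c(1), \ldots, c(s))$. I expect to handle this by induction on $s$: the base case $s = 1$ reduces to a single push-forward through an isotropic projective bundle, and the inductive step should mirror the Pfaffian-type recursion defining $\Theta_\lambda^\rho$, together with the compatibility of theta polynomials with the Gysin maps of type-B/C flag bundles. As a consistency check, the specialization $y = -1$ should reproduce the known cohomological theta-polynomial formula for $[\Omega_\lambda]$.
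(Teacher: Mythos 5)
Your high-level skeleton -- a Kempf--Laksov/Anderson--Fulton tower of projective bundles over $X$, iterated use of the motivic Hirzebruch class formulas for bundles and zero loci, and a final push-forward to a theta polynomial -- does agree with the paper's strategy. But several of the mechanisms you invoke to produce the specific factors in the formula are not the ones that actually arise, and these are real gaps rather than matters of wording.

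First, the ambient tower is not an ``isotropic flag bundle'' of $V$ with ranks prescribed by $\mathbf{q}$: it is the ordinary tower $X_i=\mathbb{P}(U/D_{i-1})$ of projective bundles, parametrizing flags $D_1\subset\cdots\subset D_s$ inside the fixed isotropic bundle $U$ (not inside $V$); only the last step, and only when $s=n+1-p$, is an isotropic Grassmannian bundle, which is exactly where the extra $T_y(2R_s)$ (respectively a trivial wedge) enters via Lemma \ref{lem3.2} (resp.\ Lemma \ref{lem4.1}). Relatedly, $\widetilde{\Omega}_\lambda\to X$ is a proper map from a smooth variety, but it is not a smooth \emph{morphism}, so there is no relative tangent bundle; the computation instead proceeds by realizing $\widetilde{\Omega}_\lambda=Z_s$ as an iterated zero locus inside $X_s$ and applying the zero-locus formula (Lemma \ref{lem2.1}/Lemma \ref{lem3.3}) at each step, then pushing down along $\pi$. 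Second, the factors $\prod_{j\le\rho_i}T_y(R_i+R_j)$ and $\prod_{j<i}T_y(R_i-R_j)$ do \emph{not} come from the projective-bundle push-forward formula: they arise from the identities $T_y(R_i\otimes D_{\rho_i}^\vee)=\prod_{j\le\rho_i}T_y(R_i+R_j)$ and $T_y(R_i\otimes D_i)=\prod_{j<i}T_y(R_i-R_j)$ appearing in the tangent/section-bundle quotient at each step. The raising-operator content of the push-forward (via \cite[Lemma 5.16]{HIMN}) is what produces $\Theta_\lambda^\rho(c(1),\dots,c(s))$, with its internal $(1\mp R_j/R_i)$ factors; conflating these two sources would derail the final identification.

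Third, and most substantively in the orthogonal case: the prefactor $\tfrac{1}{2^a}$ is not a consequence of a $2^a$-fold cover. It is the product of one factor of $\tfrac12$ for each of the first $a$ steps, coming from the basic formula (\eqref{eqn4.1} in Lemma \ref{lem4.2}, ultimately \cite[Lemma 2.2]{HIMN}) for the locus $\{L\subseteq E\}$ when $E$ is isotropic of rank $<n$; for $i>a$ the bundle $F_{q_i}$ is co-isotropic and this factor does not appear. Likewise $\prod_{i=1}^r T_y(R_i)$ is not produced by ``the quadric bundle associated with $M$''; it appears because the step-by-step tangent computation in the orthogonal case yields $T_y(R_i\otimes M)$ where $M=F^\perp/F$ satisfies $M^{\otimes 2}\cong\mathcal{O}_X$, hence $c_1(M)=0$ and $T_y(R_i\otimes M)=T_y(R_i)$. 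Without these corrections the proposed argument cannot produce the stated constants, so while the scaffolding matches, the derivation of the orthogonal prefactors and the attribution of the $T_y(R_i\pm R_j)$ factors are genuinely incorrect.
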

 For undefined notations used in Theorem \ref{TheoremC}, we refer readers to later sections \S\ref{sec3}, \S\ref{sec4} for the theta polynomials and \S\ref{sec2} for the universal operator. As for the type A cases, the corresponding class for $\Omega_\lambda$ is represented by the determinantal formulas \cite[Theorem 3.1]{ACT}.

As the fibers of $\phi$ are non-constant, we stratify $\Omega_\lambda$ into finitely many sub-loci $\Omega_{\lambda^+}$, indexed by admissible weakly increasing sequences $\mathbf{k}=(k_1,\ldots, k_s)$, to compare the motivic Hirzebruch classes of $\Omega_\lambda$ and $\widetilde{\Omega}_\lambda$. This comparison is enabled because, on the corresponding locally closed strata $\Omega_{\lambda^+}^\circ\subseteq\Omega_{\lambda^+}$, the resolution $\phi$ becomes Zariski locally trivial, since the constant intersection dimensions define well-defined vector bundles over these smooth strata; see the proof of Theorem \ref{thm3.10} as well as \cite[\S 5.1]{ACT}. Here, admissibility is governed by the following ranges and inequalities:
Let $\rho_i=\#\left\{j\;|\; q_j\geq 1-q_i \;\text{for}\; i>j\geq1\right\}$. We require that $i\leq k_i\leq \min\{n+1-q_i,n+1-p\}$, that $k_{a+1}-k_a\leq -q_{a+1}+\mathrm{max}\{0,q_a+q_{a+1}-1\}$, and for $i>a+1$, that the sequence satisfies
$k_i-k_{i-1}+(\rho_{k_{i-1}}-\rho_{k_i})\leq q_{i-1}-q_i.
$
  For each such $\mathbf{k}$, the partition 
 $\lambda^+=\lambda^+(p,\mathbf{q}, {\mathbf{k}})$ is defined by a specific piecewise formula for $\lambda_{k_i}^+$
(see \eqref{eqn:LamPlus}) and the subsequent filling rule in \S\ref{sec3.3}. Using this stratification of $\Omega_\lambda$, the class $(\iota\phi)_*T_y(\widetilde{\Omega}_\lambda)$ in Theorem \ref{TheoremC} can be written as follows.

\begin{introthm}\label{mainStra}
For a partition $\lambda=\lambda(p,\mathbf{q})$ under the same assumption as in Theorem \ref{TheoremC}, we have
\[
(\iota\phi)_*T_y\left(\widetilde{\Omega}_{\lambda}\right)=\sum_{\mathbf{k}}(-y)^{\overline{\mathbf{k}}}\iota_*T_y\left(\Omega_{\lambda^+}\right),
\]
where the sum is over all admissible weakly increasing sequences $\mathbf{k}=(0<k_1\leq k_2\leq \cdots\leq k_s)$ as above, $\lambda^+=\lambda^+(p,\mathbf{q},\mathbf{k})$ is defined as in \eqref{eqn:LamPlus} and $\overline{\mathbf{k}}=\sum_{i=1}^s(k_i-i)$. 
  \end{introthm}
  
A concrete instance of this theorem appears in Example \ref{ex3.11}.
In a similar fashion, the stratification in our theorem is almost identical to the vexillary degeneracy loci of type A, with the only differences in the way of parametrizing the strata of the loci, i.e., we use partitions $\lambda^+$ while they use triples $\tau^+=(\mathbf{k},\mathbf{p},\mathbf{q})$.

The motivic Hirzebruch class of $\Omega_\lambda$ can be computed by repeated application of Theorem \ref{mainStra} to the strata of $\Omega_\lambda$, the sub-strata and so on, using the inclusion-exclusion principle. As a result, one can express the motivic Hirzebruch class of $\Omega_\lambda$ in terms of classes $(\iota\phi)_*T_y(\widetilde{\Omega}_{\mu})$ of the isotropic (or odd orthogonal) degeneracy sub-loci $\Omega_\mu\subseteq\Omega_\lambda$. Once we express the motivic Hirzebruch class of $\Omega_\lambda$ in terms of such classes $(\iota\phi)_*T_y(\widetilde{\Omega}_\mu)$, we can apply Theorem \ref{TheoremC} to obtain a polynomial in $y$ with coefficients in the Chern classes of certain vector bundles. That is, we establish the following theorem:

   \begin{introthm}\label{mainCal}
With Theorems \ref{TheoremC} and Theorem \ref{mainStra} combined together, the motivic Hirzebruch class of $\Omega_\lambda$
 can be computed for every $\lambda$.
   \end{introthm}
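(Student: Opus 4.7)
The plan is to treat Theorem~\ref{mainStra} as a linear identity for the unknown class $\iota_*T_y(\Omega_\lambda)$ and to solve it by iteration. First I would identify within the sum on the right-hand side the distinguished index $\mathbf{k}_0$ for which the associated partition $\lambda^+(p,\mathbf{q},\mathbf{k}_0)$ coincides with $\lambda$ itself, corresponding to the open stratum $\Omega_\lambda^\circ\subseteq\Omega_\lambda$ on which $\phi$ is an isomorphism; by the definition $\overline{\mathbf{k}}=\sum_{i=1}^s(k_i-i)$, this term contributes $(-y)^0\iota_*T_y(\Omega_\lambda)$. Isolating it yields the rearranged identity
\[
\iota_*T_y(\Omega_\lambda)\;=\;(\iota\phi)_*T_y\!\left(\widetilde{\Omega}_\lambda\right)\;-\;\sum_{\mathbf{k}\neq\mathbf{k}_0}(-y)^{\overline{\mathbf{k}}}\,\iota_*T_y\!\left(\Omega_{\lambda^+(p,\mathbf{q},\mathbf{k})}\right),
\]
which expresses $\iota_*T_y(\Omega_\lambda)$ in terms of the resolution class and the classes of strictly deeper degeneracy sub-loci $\Omega_{\lambda^+}\subsetneq\Omega_\lambda$.

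Next I would apply the same identity recursively to each class $\iota_*T_y(\Omega_{\lambda^+})$ appearing on the right. Each $\Omega_{\lambda^+}$ indexed by a nontrivial $\mathbf{k}$ is again an isotropic (respectively odd orthogonal) degeneracy locus of the same shape but cut out by a strictly larger partition, hence of strictly smaller dimension. Because the relevant partitions form a finite poset bounded below, the iteration terminates after finitely many steps and, after collecting contributions via inclusion--exclusion, produces a finite expansion
\[
\iota_*T_y(\Omega_\lambda)\;=\;\sum_{\mu}c_\mu(y)\,(\iota\phi)_*T_y\!\left(\widetilde{\Omega}_\mu\right),\qquad c_\mu(y)\in\mathbb{Z}[y],
\]
summed over the finitely many partitions $\mu$ encountered along the recursion, with coefficients determined purely by the combinatorics of the sequences $\mathbf{k}$.

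Finally, I would substitute the closed-form expressions from Theorem~\ref{TheoremC} for each $(\iota\phi)_*T_y(\widetilde{\Omega}_\mu)$. This replaces every summand by an explicit evaluation of the theta polynomial $\Theta_\mu^\rho$ on the Chern classes of the bundles $R_i$, $V$, $F_{q_i}$, $U$ (and $M$ in the orthogonal case), capped with $T_y(X)$. Assembling these substitutions produces a single closed expression for $\iota_*T_y(\Omega_\lambda)$ as a polynomial in $y$ with coefficients in $A^*(X)$, which is precisely the asserted computability.

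The main delicate point, and the only step that is not pure bookkeeping, is verifying that the index $\mathbf{k}_0$ producing $\lambda^+=\lambda$ is the unique one with $\overline{\mathbf{k}}=0$ and that the associated stratum is the open stratum of $\Omega_\lambda$, so that the pivot term in the recursion is isolated unambiguously. One must also confirm that the induced order on the partitions traversed by the recursion is well-founded; this follows from dimension bounds on the strata in both type~B and type~C, but requires care because the strata are parametrised by the triples $(\mathbf{k},p,\mathbf{q})$ rather than by partitions alone. Once these points are secured, the argument reduces to the tautological unwinding described above, and Example~\ref{ex3.11} illustrates how it plays out in a concrete case.
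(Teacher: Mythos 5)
Your proposal is correct and follows essentially the same approach the paper takes (the paper presents this argument informally in the paragraph preceding the theorem and works it out concretely in Example \ref{main_illu}): isolate the pivot term indexed by $\mathbf{k}_0=(1,2,\ldots,s)$, for which $\overline{\mathbf{k}_0}=0$ and $\lambda^+(p,\mathbf{q},\mathbf{k}_0)=\lambda$, rearrange Theorem \ref{mainStra} to solve for $\iota_*T_y(\Omega_\lambda)$, recurse on the remaining strata (which lie in strictly higher codimension since $\mathrm{codim}(\Omega_{\lambda^+},\Omega_\lambda)=|\lambda^+|-|\lambda|>0$ whenever $\mathbf{k}\neq\mathbf{k}_0$), and then feed the resulting inclusion--exclusion expansion into Theorem \ref{TheoremC}. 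Your verification that $\mathbf{k}_0$ is the unique index with $\overline{\mathbf{k}}=0$ is immediate from the constraint $k_i\geq i$, so the pivot term is indeed isolated unambiguously. One minor inaccuracy: you suggest the strata in this paper are parametrised by triples $(\mathbf{k},p,\mathbf{q})$, but the paper explicitly parametrises them by the partitions $\lambda^+$ themselves (the triples are the convention of the type~A reference \cite{ACT}); the well-foundedness you need is nonetheless supplied by the same codimension bound, so this does not affect the argument.
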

    
Explicit calculations of the motivic Hirzebruch class of $\Omega_\lambda$ as explained above can be found in Example \ref{main_illu}.

  As applications of our results, we concern orbit closures of the orthogonal group $O_n$ for any $n$ and symplectic group $Sp_n$ for even $n$ acting on the complete flag variety $Fl_n$. These actions have finitely many orbits. We refer to these orbits as $K_n$-orbits where $K_n$ is one of the groups $O_n$ or $Sp_n$; they are called {\it orthogonal orbit closures} when $K_n=O_n$ and {\it symplectic orbit closures} when $K_n=Sp_n$. These orbits and orbit closures are uniquely parametrized by the set of involutions $I_n^{O_n}$ and the set of fixed-point-free involutions $I_n^{Sp_n}$ in the symmetric group $S_n$, respectively. Explicit definition of these orbit closures will be discussed in \S\ref{sec5}. The orthogonal and symplectic orbit closures belong to two of the three known families of type A symmetric varieties, the third coming from the action of $GL_p\times GL_q$ on $Fl_n$ \cite{RS90}.
  
The geometry of symmetric orbit closures has played an important role in representation theory \cite{Tanisaki} and they generalize Schubert varieties \cite{Wyser}. Accordingly, many of the questions for Schubert varieties (or degeneracy loci) can be raised for these $K_n$-orbit closures. In this regard, the Chern--Mather classes, as one of characteristic classes, of the symmetric orbit closure given by the action of $GL_p\times GL_q$ on $Fl_n$ will be discussed in the author's joint work \cite{GJL}. 
   
While classes of the symmetric varieties in cohomologies \cite{Brendan,Wyser,WY} and K-theory \cite{MP} are well-studied, the characteristic classes of $K_n$-orbit closures such as the CSM classes are much less known. So, we present the formulas for the motivic Hirzebruch classes of the orthogonal and symplectic orbit closures, especially those associated with {\it vexillary} involutions (defined in \S\ref{sec:vex}):

           \begin{introthm}
      Fix $K_n=O_n$ or $Sp_n$. Let $z\in I_n^{K_n}$ be vexillary with its associated partition $\lambda=\lambda^{K_n}(z)$. Let $X_z^{K_n}$ be the corresponding $K_n$-orbit closure and $\phi:\widetilde{X}_z^{K_n}\rightarrow X_z^{K_n}$ the desingularization. Let $\iota:X_z^{K_n}\hookrightarrow Fl_n$. Then the class $(\iota\phi)_*T_y(\widetilde{X}_z^{K_n})$ is isomorphic to
                  \begin{enumerate}
  \item (orthogonal, $K_n=O_n$)
\[
\prod_{j\leq i-1}\dfrac{T_y(R_i+R_{j})}{T_y(R_i-R_j)}\mathrm{Pf}_\lambda(d(1),\ldots,d(\ell))\;\scalebox{1.2}{$\cap$}\; T_y(Fl_n),
\]
where $d(i)=c(i)/T_y(i)$ if $\ell$ is even,
 and if $\ell$ is odd, then $d(i)=c(i)/T_y(i)$ for $1\leq i\leq \ell$, and $d(i)=1$ for $i=\ell+1$.
\item (symplectic, $K_n=Sp_n$) 
\[
\dfrac{1}{2^\ell}\prod_{j\leq i-1}\dfrac{T_y(R_i+R_{j})}{T_y(R_i-R_j)}\prod_{i=1}^\ell (T_y(R_i))^2\mathrm{Pf}_\lambda(d(1),\ldots,d(\ell))\;\scalebox{1.2}{$\cap$}\; T_y(Fl_n),
\]
where $d(i)=c(i)/T_y(i)$ if $\ell$ is even,
 and if $\ell$ is odd, then $d(i)=c(i)/T_y(i)$ for $1\leq i\leq \ell$, and $d(\ell+1)=1$.
 \end{enumerate}
     \end{introthm}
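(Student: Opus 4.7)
The plan is to derive this theorem as a direct application of Theorem \ref{TheoremC} by realizing each $K_n$-orbit closure $X_z^{K_n} \subseteq Fl_n$ as an isotropic or odd orthogonal degeneracy locus $\Omega_\lambda$ on $Fl_n$, with the desingularization $\widetilde{X}_z^{K_n}$ playing the role of the resolution $\widetilde{\Omega}_\lambda \to \Omega_\lambda$.

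The first step is to set up the geometric dictionary. For a vexillary involution $z \in I_n^{K_n}$, I would use the results of \S\ref{sec:vex} to produce the partition $\lambda = \lambda^{K_n}(z) = \lambda(p,\mathbf{q})$ and the integer $p$ and sequence $\mathbf{q}$ as in \eqref{eqn:lamb}, both read off from the Rothe diagram of $z$. I would then verify scheme-theoretically that $X_z^{K_n} = \Omega_\lambda$, where the ambient $X$ is $Fl_n$ and the bundles $V$, $U$, $F_{q_i}$, $R_i$ (and $M$ in the orthogonal case) are the tautological bundles on $Fl_n$ together with the bundles furnished by the $K_n$-invariant bilinear form. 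The canonical resolution of $X_z^{K_n}$ constructed in \S\ref{sec5} then matches the resolution $\phi$ of Theorem \ref{TheoremC}.

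The second step is to specialize the universal formula of Theorem \ref{TheoremC} to the vexillary regime. Vexillariness is exactly the condition that $\lambda$ has strictly decreasing parts and that the flagging sequence $\rho$ satisfies $\rho_i = i-1$ for every $i$. Under this specialization, the product $\prod_{j \leq \rho_i} T_y(R_i + R_j)$ in the numerator of the universal operator becomes $\prod_{j \leq i-1} T_y(R_i + R_j)$, and the theta polynomial $\Theta_\lambda^\rho$ collapses to the Pfaffian $\mathrm{Pf}_\lambda$. Packaging the bundle quotients as $d(i) = c(i)/T_y(i)$ absorbs the remaining $T_y(R_i \otimes \cdots)$ factors from the operator and produces precisely the expressions stated.

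The third and most delicate step is to reconcile the boundary and parity bookkeeping. The case distinction for $d(\ell+1) = 1$ when $\ell$ is odd arises from the boundary factor $f_s$ in the isotropic formula of Theorem \ref{TheoremC} (and its analogue in the orthogonal formula), while in the symplectic orbit closure case the factor $1/2^\ell$ comes from specializing $1/2^a$ with $a = \ell$, and the extra product $\prod_{i=1}^\ell (T_y(R_i))^2$ comes from combining $\prod_{i=1}^r T_y(R_i)$ from Theorem \ref{TheoremC} with additional $T_y(R_i)$ factors coming from the symplectic form. The main obstacle will be making the identification $X_z^{K_n} = \Omega_\lambda$ rigorous for all vexillary $z$, and verifying that all parity conventions, boundary terms, and normalization factors align consistently between the orbit closure and its degeneracy locus realization on both the orthogonal and symplectic sides.
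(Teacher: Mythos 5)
Your high-level strategy is the paper's: realize the orbit closure as a Grassmannian degeneracy locus on $Fl_n$, observe that the vexillary condition forces all the flag indices $\mu_i$ (the ``$q_i$'') to be positive so that $\rho_i=i-1$ and the theta polynomial collapses to a Pfaffian, and then specialize the degeneracy-locus formulas. However, there are concrete gaps in the execution. First, you never commit to \emph{which} type of degeneracy locus each orbit closure becomes, and the answer involves a swap you do not acknowledge: the \emph{orthogonal} orbit closure $X_z^{O_n}$ is identified with a \emph{Lagrangian} (type C) degeneracy locus, because the graph of the invertible symmetric map $\mathsf f$ is isotropic for the symplectic form $\langle\,,\rangle^-$ on $\mathbb{C}^n\oplus(\mathbb{C}^n)^\vee$; dually, the \emph{symplectic} orbit closure becomes an \emph{orthogonal} degeneracy locus. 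Without this cross-over the formulas do not match (e.g.\ the absence of any $1/2^\ell$ factor in the $O_n$ case is explained precisely by its being a type C locus, covered by Corollary \ref{cor3.6}).

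Second, and more seriously, the symplectic orbit closure is naturally an \emph{even} orthogonal (type D) locus inside a rank-$2n$ bundle with symmetric form, whereas Theorem \ref{TheoremC} only treats the odd orthogonal case. The paper's proof of Theorem \ref{thm5.7} must therefore pass through the isomorphism $\eta:OG(V_B)\xrightarrow{\sim}OG'(V_D)$ of \S\ref{sec4.4.2} and the comparison Lemma \ref{eqn4.8} identifying $(\pi_D\iota_D)_*T_y(\widehat\Omega_\lambda^D)$ with $(\pi_B\iota_B)_*T_y(\widehat\Omega_\lambda^B)$ before Corollary \ref{cor4.5} can be applied; this reduction is entirely absent from your outline. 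Two smaller misattributions: the convention $d(\ell+1)=1$ for odd $\ell$ is just the standard padding needed to define a Pfaffian of odd size (it already appears in Corollary \ref{cor3.6}), not a manifestation of the boundary factor $f_s=T_y(2R_s)$, which only occurs when $s=n+1-p$; and the extra factor $\prod_{i=1}^\ell T_y(R_i)$ turning $\prod T_y(R_i)$ into $\prod(T_y(R_i))^2$ comes from the line bundle $\mathcal{M}\cong\mathcal{F}^\perp/\mathcal{F}$ with $c_1(\mathcal{M})=0$ sitting in the denominators $T_y(R_i\otimes(\mathcal{V}_B-\mathcal{F}_{\mu_i'}-\mathcal{S}-\mathcal{M}))$ of the type B formula, not from ``the symplectic form.''
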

  Undefined notations in the above theorem will be defined later in \S\ref{sec5}.
  By setting $y=-1$, we can compute the CSM classes of $K_n$-orbit closures $X_y^{K_n}$ in terms of sub-orbit closures $X_z^{K_n}\subseteq X_y^{K_n}$, as in the case of the degeneracy loci.

  Our results and techniques are mainly inspired by those in \cite{ACT} of type A and \cite{AF,HIMN} which studied the Chern class formulas for the fundamental classes and K-theoretic formulas for the degeneracy loci. We adapt their approach to our settings with careful analysis to get the motivic classes of the corresponding isotropic degeneracy loci. Unlike fundamental classes discussed in \cite{AF} and ordinary Grassmannian case \cite{ACT}, computing the motivic classes of isotropic versions require more delicate arguments, since they involve symmetric or skew-symmetric forms on vector bundles. We stratify $\Omega_\lambda$ by the loci $\Omega_{\lambda^+}$ associated with certain partition $\lambda^+$, along which the resolution $\widetilde{\Omega}_\lambda\rightarrow \Omega_\lambda$ is locally trivial. We then establish the relation between the push-forward of the motivic class of $\widetilde{\Omega}_\lambda$ and the motivic classes of the strata $\Omega_{\lambda^+}\subseteq \Omega_\lambda$, after which the inclusion–exclusion principle yields the motivic classes of the isotropic degeneracy loci.

  When $X$ to be taken as the Lagrangian or the odd maximal orthogonal Grassmannians, the motivic class of its Schubert varieties is obtained as expressions involving the motivic class of the corresponding Grassmannians. The specialization $y=-1$ recovers the CSM classes, in agreement with the formulas of \cite{AM16}, see Examples \ref{ex3.12} and \ref{ex4.7}.

A limitation of the present work is that it addresses degeneracy loci for odd orthogonal and symplectic Grassmannians, and only the maximal case in type D. The treatment of non-maximal even orthogonal degeneracy loci, as well as extensions to vexillary degeneracy loci in types B, C, and D, would require additional arguments beyond those developed here and is left for future research.

\section{Preliminaries}\label{sec2}
This section provides a brief review of the motivic Chern and Hirzebruch classes from \cite{BSY} and necessary notions and results on vector bundles and raising operators for our later computation of the motivic classes of the degeneracy loci.

\subsection{The motivic Chern and Hirzebruch classes}
Let $X$ be an algebraic variety over a base field of characteristic zero and $\mathcal{V}$ the category of complex algebraic varieties. Let $G_0(X)$ be the Grothendieck group of coherent sheaves of $\mathcal{O}_X$-modules, and $A_*(X)$ the Chow group. We denote by $K_0(\mathcal{V}/X)$ the relative Grothendieck group of algebraic varieties over $X$. Let $y$ be a variable. On $K_0(\mathcal{V}/X)$ we have the {\it motivic Chern class transformation} 
\[
mC:K_0(\mathcal{V}/X)\rightarrow G_0(X)\otimes\mathbb{Z}[y]
\] and the motivic Hirzebruch class 
\[
T_y:K_0(\mathcal{V}/X)\rightarrow A_*(X)\otimes \mathbb{Q}[y],
\]
commuting with pushdown for proper maps. Furthermore, one may relate these motivic classes by the following commutative diagram
\[
\begin{tikzcd}[column sep=tiny]
&K_0(\mathcal{V}/X)\ar[dr,"T_y"]  \ar[dl, "mC"'] 
&
&[1.5em] \\
G_0(X)\otimes\mathbb{Z}[y]\ar[rr,"td_{(1+y)}"]
&
& A_*(X)\otimes \mathbb{Q}[y] 
\end{tikzcd}
\]
as $td_{(1+y)}\circ mC=T_y$ (\cite[\S 3]{BSY}), where $td_{(1+y)}(\mathcal{F})=\sum_{i\geq 0}td_i(\mathcal{F})\cdot (1+y)^{-i})$ with $td_{i}$ the degree $i$ component of the Todd class transformation $td_*$ that is linearly extended over $\mathbb{Z}[y]$. To be specific, the natural transformation $td_{(y+1)}:G_0(X)\rightarrow A_*(X)\otimes\mathbb{Q}[y,1/(1+y)]$ is the reformulation of the singular Riemann-Roch theorem of Baum--Fulton--MacPherson by Yokura \cite{Yok}.

For smooth and pure-dimensional $X$ with tangent bundle $\mathcal{T}_X$, these motivic Chern and Hirzebruch classes satisfy the normalization conditions: 
\begin{equation}\label{eqn2.1}
\displaystyle mC\left(id_X\right)=\sum_{i\geq 0}\left[\wedge^i\mathcal{T}_X^\vee\right]y^i=:\lambda_y\left(\mathcal{T}_X^\vee\right)\scalebox{1.2}{$\cap$}\left[\mathcal{O}_X\right]
\end{equation}
where $\lambda_y:G^0(X)\rightarrow G^0(X)\otimes \mathbb{Z}[y]$ denotes the total $\lambda$-class transformation on the Grothendieck $G^0(X)$ of coherent locally free sheaves on $X$ and $\cap[\mathcal{O}_X]:G^0(X)\rightarrow G_0(X)$ is deduced by $\otimes \mathcal{O}_X$; moreover,
\[
T_y\left(id_X\right)=\displaystyle\prod_{i=1}^{\mathrm{dim}\;X}Q_y\left(\alpha_i\right)\scalebox{1.2}{$\cap$}[X]=:T_y\left(\mathcal{T}_X\right)\scalebox{1.2}{$\cap$}\left[X\right]
\]
where $\alpha_i$ is the Chern roots of $\mathcal{T}_X$, and $Q_y(\alpha)$ is the normalized power series
\[
Q_y\left(\alpha\right):=\dfrac{\alpha(1+y)}{1-e^{-\alpha(1+y)}}-\alpha y 
\]
in $\mathbb{Q}[y][[\alpha]]$. In addition, the operators $\lambda_y$ and $T_y$ satisfy the properties $\lambda_y(a+b)=\lambda_y(a)\lambda_y(b)$ and $T_y(a+b)=T_y(a)T_y(b)$ for all $a$ and $b$.

The transformations $mC$ and $T_y$ are also well-defined for arbitrary $X$ admitting stratifications. Let $\{X_i\}_{i\in I}$ be a stratification by locally closed smooth subvarieties $X_i$. Then 
\[
mC\left(id_X\right):=\sum_{i\in I}mC\left(X_i\rightarrow X\right),\quad\text{and}\quad T_y\left(id_X\right):=\sum_{i\in I}T_y\left(X_i\rightarrow X\right).
\]

The following lemma (\cite[Lemma 2.4]{ACT}) is useful when we consider stratifications later in this paper. Let $\{X_k\}_{k\in K}$ be a stratification of $X$ by locally closed strata $X_k$. Suppose that $f:Y\rightarrow X$ is a proper morphism such that over each stratum $X_k$ the map $f$ is locally trivial with smooth fiber $F_k$. We further assume that there is a unique top-dimensional stratum $X_0$ in the stratification of $X$. 
\begin{lemma}\label{lem:fib}
Let $f_!$ be the $K$-theoretic push-forward from $f$. Then the push-forwards are given by 
\[
f_!mC\left(Y\rightarrow X\right)=\sum_{k\in K}d_kmC\left(\overline{X}_k\hookrightarrow X\right),\quad\text{and}\quad f_!T_y\left(Y\rightarrow X\right)=\sum_{k\in K}e_kT_y\left(\overline{X}_k\hookrightarrow X\right)
\]
where $d_k$ and $e_k$ are defined by
\[
d_k:=\left(\int_{F_k}\lambda_y(\mathcal{T}_{F_k}^\vee)\right)-\sum_j d_j,\quad\text{and}\quad e_k:=\left(\int_{F_k}T_y(F_k)\right)-\sum_j e_j
\]
whose sums are over the index $j$ such that $X_k$ are contained in the closure $\overline{X}_j$ of $X_j$.
\end{lemma}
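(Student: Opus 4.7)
The plan is to prove both statements in parallel, by reducing to a fiber-wise calculation on each stratum and then inverting a triangular linear system. First I would stratify $Y$ by setting $Y_k := f^{-1}(X_k)$, so that $[\mathrm{id}_Y] = \sum_k [Y_k \hookrightarrow Y]$ in $K_0(\mathcal{V}/Y)$. Additivity of $mC$ and $T_y$ on the relative Grothendieck group, combined with commutation with the proper push-forward $f_!$, reduces the left-hand sides to $\sum_k mC(Y_k \to X)$ and $\sum_k T_y(Y_k \to X)$ respectively, localizing the problem to a single stratum.

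Second, I would exploit the hypothesis that $f|_{Y_k}:Y_k \to X_k$ is locally trivial with smooth fiber $F_k$. After choosing a refinement $X_k = \bigsqcup_i V_{k,i}$ into locally closed pieces on which $f$ trivializes as $V_{k,i} \times F_k$, the module structure $K_0(\mathcal{V}/V_{k,i}) \otimes K_0(\mathcal{V}) \to K_0(\mathcal{V}/V_{k,i})$ by external product identifies $[V_{k,i} \times F_k \to V_{k,i}]$ with $[\mathrm{id}_{V_{k,i}}] \cdot [F_k]$. Applying the transformations and the projection formula yields
\[
mC(V_{k,i} \times F_k \to V_{k,i}) = \Big(\int_{F_k} \lambda_y(\mathcal{T}_{F_k}^\vee)\Big) \cdot mC(\mathrm{id}_{V_{k,i}}),
\]
and an analogue for $T_y$ with scalar $\int_{F_k} T_y(F_k)$; these two scalars agree by Hirzebruch--Riemann--Roch applied to the smooth proper $F_k$. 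Summing over $i$ and pushing forward to $X$ produces the intermediate identity
\[
f_!\, mC(Y \to X) = \sum_{k \in K} \Big(\int_{F_k} \lambda_y(\mathcal{T}_{F_k}^\vee)\Big) \cdot mC(X_k \hookrightarrow X),
\]
with a parallel formula for $T_y$.

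Finally, I would translate this into the asserted expansion in the basis $\{mC(\overline{X}_k \hookrightarrow X)\}$. Additivity applied to the partition $\overline{X}_k = \bigsqcup_{X_j \subset \overline{X}_k} X_j$ gives
\[
mC(\overline{X}_k \hookrightarrow X) = \sum_{j \,:\, X_j \subset \overline{X}_k} mC(X_j \hookrightarrow X),
\]
a change of basis that is lower-triangular in the dimension of strata and therefore invertible. Substituting the ansatz $\sum_k d_k \, mC(\overline{X}_k \hookrightarrow X)$ into the intermediate identity and matching coefficients of $mC(X_k \hookrightarrow X)$ forces precisely the recursion in the statement, which is well-posed by descending induction starting at the unique top-dimensional stratum $X_0$. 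The coefficients $e_k$ are handled identically.

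The main obstacle I anticipate is the product identity on a trivializing chart: it rests on the compatibility of the motivic Chern transformation with the external-product/module structure on $K_0(\mathcal{V}/V)$, a property that follows from the Brasselet--Sch\"urmann--Yokura construction but that must be invoked with care, since it genuinely requires Zariski-local rather than merely \'etale-local triviality in order for the scissor relations in $K_0(\mathcal{V}/V)$ to recombine the local pieces correctly. Once this step is secured, the remainder is bookkeeping via inclusion-exclusion.
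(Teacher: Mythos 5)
Your proof is correct and follows essentially the same route as the cited source ([ACT, Lemma 2.4]), which this paper quotes without reproducing a proof: stratify $Y$ along $f^{-1}(X_k)$, use multiplicativity of $mC$ and $T_y$ over trivializing locally closed pieces (with scalar $\chi_y(F_k)$, using that each $F_k$ is smooth \emph{and} proper because $f$ is proper), then invert the unitriangular change of basis from $\{mC(X_k\hookrightarrow X)\}$ to $\{mC(\overline{X}_k\hookrightarrow X)\}$ afforded by the frontier condition $\overline{X}_k=\bigsqcup_{X_j\subset\overline{X}_k}X_j$; your note that the recursion needs the sum over $j$ to exclude $j=k$ and that the triviality must be Zariski- (or constructible-) local rather than merely \'etale-local are both correct reading of the statement and genuine prerequisites.
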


\subsection{Vector bundles}

Let $E$ be a vector bundle on a smooth variety $X$ and $Z\hookrightarrow X$ the zero locus of a regular section $s$ of the vector bundle $E\rightarrow X$.
\begin{lemma}[{\cite[Lemma 2.2]{ACT}}]\label{lem2.1}
If $s:X\rightarrow E$ meets the zero section of $E$ transversally, then the motivic Chern and Hirzebruch classes of $Z$ are given by 
\begin{align*}
mC\left(Z\hookrightarrow X\right)=\dfrac{\lambda_{-1}(E^\vee)}{\lambda_y(E^\vee)} mC\left(id_Z\right),\quad{\text{and}}\quad T_y\left(Z\hookrightarrow X\right)=\dfrac{c_{top}(E)}{T_y(E)}T_y\left(id_X\right).\\
\end{align*}
\end{lemma}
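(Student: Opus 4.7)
The plan is to derive both formulas from the adjunction short exact sequence attached to the regular embedding $\iota : Z \hookrightarrow X$, combined with the Koszul and top-Chern-class push-forward identities for transverse zero loci, plus the projection formula.

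First, transversality of $s$ guarantees that $Z$ is smooth of codimension $r=\rk E$ and identifies the normal bundle with $\iota^* E$, so the cotangent exact sequence takes the form
\[
0 \longrightarrow \iota^* E^\vee \longrightarrow \iota^* \mathcal{T}_X^\vee \longrightarrow \mathcal{T}_Z^\vee \longrightarrow 0.
\]
By the multiplicativity of $\lambda_y$ and $T_y$ recorded just after \eqref{eqn2.1}, this yields
\[
\lambda_y(\mathcal{T}_Z^\vee) = \iota^*\!\left(\lambda_y(\mathcal{T}_X^\vee)/\lambda_y(E^\vee)\right), \qquad T_y(\mathcal{T}_Z) = \iota^*\!\left(T_y(\mathcal{T}_X)/T_y(E)\right).
\]

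Next, I would invoke the two classical push-forward identities that hold for the zero locus of a transverse section of $E$: in $K$-theory the Koszul resolution $\wedge^\bullet E^\vee \to \iota_* \mathcal{O}_Z$ gives $\iota_*[\mathcal{O}_Z] = \lambda_{-1}(E^\vee) \cap [\mathcal{O}_X]$, and in the Chow group the Euler-class (self-intersection) formula gives $\iota_*[Z] = c_{top}(E) \cap [X]$. Substituting the adjunction expressions above into $mC(id_Z) = \lambda_y(\mathcal{T}_Z^\vee) \cap [\mathcal{O}_Z]$ and $T_y(id_Z) = T_y(\mathcal{T}_Z) \cap [Z]$, applying $\iota_*$, and moving the pulled-back factors outside by the projection formula, the leftover pieces $\lambda_y(\mathcal{T}_X^\vee) \cap [\mathcal{O}_X]$ and $T_y(\mathcal{T}_X) \cap [X]$ are recognized through the normalization axioms \eqref{eqn2.1} as $mC(id_X)$ and $T_y(id_X)$. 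Since by definition $mC(Z \hookrightarrow X) = \iota_* mC(id_Z)$ and $T_y(Z \hookrightarrow X) = \iota_* T_y(id_Z)$, the two identities follow.

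The only genuinely substantive point is verifying that transversality produces a regular embedding with the expected conormal bundle $\iota^* E^\vee$, so that both the Koszul complex is exact and the Euler-class formula applies; once this is in place the remaining steps are bookkeeping with the multiplicativity of $\lambda_y$, $T_y$ and with the projection formula, and no genuine obstacle arises.
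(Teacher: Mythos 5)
Your derivation is correct and is the standard argument (adjunction sequence for the conormal bundle, Koszul resolution / Euler class push-forward, projection formula); the paper itself does not reproduce a proof here, instead citing \cite[Lemma 2.2]{ACT}, and your argument is essentially what that reference does. One small point worth flagging: your chain of deductions correctly produces
\[
mC(Z\hookrightarrow X)=\dfrac{\lambda_{-1}(E^\vee)}{\lambda_y(E^\vee)}\,mC(id_X),
\]
i.e.\ with $mC(id_X)$ on the right, whereas the lemma as printed in the paper has $mC(id_Z)$. That is a typo in the paper (note that the companion formula for $T_y$ correctly reads $T_y(id_X)$, and the two sides must both live in $K_0(X)\otimes\mathbb{Z}[y]$), so your proof in fact recovers the intended statement. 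The only hypotheses you rely on --- $X$ smooth, $s$ transverse, hence $Z$ smooth with normal bundle $\iota^*E$ so the Koszul complex of $s$ is a resolution of $\iota_*\mathcal{O}_Z$ and $\iota_*[Z]=c_{\mathrm{top}}(E)\cap[X]$ --- are exactly the standing assumptions of the lemma, and the passage from $\lambda_y(\mathcal{T}_Z^\vee)$ and $T_y(\mathcal{T}_Z)$ to the pulled-back quotients uses only the multiplicativity recorded after \eqref{eqn2.1}. No gap.
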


Let $E$ be a vector bundle of rank $e$ and its Chern roots $\alpha_i$ for $1\leq i\leq e$. Let $R$ be a formal variable. Then we define 
\[
T_y\left(R\otimes {E}\right):=\prod_{i=1}^eQ_y\left(R+\alpha_i\right).
\]
By definition, for a smooth variety $X$, if ${E}$ is the tangent bundle of $X$ and $R=0$, the above equation equals the motivic Hirzebruch class $T_y(X)$ of $X$.

A {\it virtual vector bundle} is a formal difference $[E]-[F]$ of isomorphism classes of vector bundles $E$ and $F$, and two virtual vector bundles $E-F$ and $E'-F'$ can be regarded as identical if they are the same in the Grothendieck ring $K(X)$ of vector bundles over $X$. Let the ranks of vector bundles $E$ and $F$ be $r_1$ and $r_2$ respectively. Then the {\it virtual rank} of the virtual vector bundle $E-F$ is given by its $0$th Chern character $ch(E-F)_0=\mathrm{rk}(E-F)=r_1-r_2$.

 Additionally, for a virtual vector bundle $\mathscr{E}=E-F$ of rank ${m}$, and for a line bundle $L$, setting $y=-1$ in $T_y(\mathscr{E}\otimes L)=T_y(L\otimes(E-F))$ gives $c(\mathscr{E}\otimes L)$, which is computed by
\[
c_p(\mathscr{E}\otimes L)=\sum_{i=0}^p\binom{m-i}{p-i}c_i(\mathscr{E})c_1(L)^{p-i}.
\]
 Here, for any integer $m$ and a nonnegative integer $k$, the generalized binomial coefficient is defined by
\[
\displaystyle\binom{m}{k}=\dfrac{m(m-1)\cdots(m-k+1)}{k!}=(-1)^{k}\binom{-m+k-1}{k}
\] for $k\geq 1$ and $\binom{m}{0}=1$.
Alternatively, it can be expressed as
\begin{equation}\label{ACTlem4.2}
c(\mathscr{E}\otimes L)=\left(1+c_1(L)\right)^ec_{\frac{1}{1+c_1(L)}}(\mathscr{E}),
\end{equation}
where $c_t(\mathscr{E}):=\sum_{i\geq 0}c_i(\mathscr{E})t^i$. These formulas originally appeared for vector bundles in \cite[Example 3.2.2]{Ful}, and were later generalized to virtual vector bundles in \cite[Lemma 4.2]{ACT}.

\subsection{Raising operators}
We define {\it raising operators} $R_i$ for $1\leq i\leq n$ on sequences $s=(s_1,\ldots, s_n)$ of nonnegative integers by increasing the $i$th index by $1$, i.e.,
\[
R_i\left(s_1,\ldots,s_i,\ldots,s_n\right)=\left(s_1,\ldots, s_{i+1},\ldots,s_n\right),
\]
and these operators are commute pairwise.
We may also regard a monomial $R=\prod R_j^{m_j}$ as a {\it raising operator}.

Let $A$ be a commutative ring and have elements $c(i)_r$ for $1\leq i\leq n$ and a nonnegative integer $r$. Let $c_s$ denote $c(1)_{s_1}c(2)_{s_2}\cdots c(n)_{s_n}$ for $s=(s_1,\ldots,s_n)$.  Given a finite formal sum $\sum_{s\in \mathbb{Z}_{\geq0}^n} a_sc_{s}$, $a_s\in A$, the raising operator $R$ operates on the formal sum as
\[
R\left(\sum_{s\in \mathbb{Z}_{\geq0}^n }a_sc_s\right)=\sum_{s\in \mathbb{Z}_{\geq0}^n }a_sc_{R(s)}.
\]
That is, the raising operator applies to the index $s$.  To be specific, for each $i$, the raising operator $R_i$ increases the index of $c(i)$ by one as
\[
R_i\left(c(1)_{s_1}\cdots c(i)_{s_i}\cdots c(n)_{s_n}\right)=c(1)_{s_1}\cdots c(i)_{s_i+1}\cdots c(n)_{s_n}
\]
and these $R_i$ are extended linearly over $\mathbb{Q}$, and multiplicatively on monomials in elements $c(i)$. Additionally, the polynomial ring $A[R_i]_{1\leq i\leq n}$ acts on expressions in the following manner
\[
\left(\sum b_RR\right)\left(\sum a_s c_s\right)=\sum_t\left(\sum_{R(s)=t}b_Ra_s\right)c_t.
\]
This polynomial action can be extended to an action of power series ring $A[[R_i]]_{1\leq i\leq n}$ on the set of any long sequences but in that case, one must require the condition that $c(i)_0=1$ for all $i$ and $c(i)_0=1$ when $i>N$ for some $N$. 

In much of the literature (see, for example, \cite{AF,Tam11}), raising operators are denoted by $R_{ij}$ and act by increasing the $i$th entry and decreasing the $j$th entry of a sequence. Instead we work with the simpler operators $R_i$ introduced above, in line with the convention of \cite[\S 1.3.1]{ACT}, and use $T_{ij}$ to denote the operator usually written as $R_{ij}$, in order to avoid unnecessary confusion.

 \section{Isotropic Degeneracy Loci}\label{sec3}

\begin{conv}
Throughout \S\ref{sec3} and \S\ref{sec4}, \(X\) is assumed to be smooth and irreducible over a field of characteristic zero, unless stated otherwise. In particular, we identify operational Chow cohomology with Chow homology via cap product with \([X]\). The characteristic factors appearing in our main formulas are interpreted in \(A^*(X)\otimes \mathbb{Q}[y]\), while the resulting Hirzebruch classes are obtained by capping with \(T_y(X)\) and therefore lie in \(A_*(X)\otimes \mathbb{Q}[y]\). The relevant rank conditions and isotropic flags will be specified in each case.
\end{conv}

As in Lemma \ref{lem:fib} and Lemma \ref{lem2.1}, the motivic Chern and Hirzebruch classes share an equivalent formal structure, so that we may restrict our attention to the motivic Hirzebruch classes for simplicity. 
In this section, we establish formulas for the motivic Hirzebruch class of resolutions of the isotropic degeneracy loci and then use them to express the Hirzebruch classes of these loci as polynomials in $y$.

We begin by defining the isotropic degeneracy loci. Let $V$ be a vector bundle of rank $2n$ over $X$, equipped with a non-degenerate skew-symmetric bilinear form $\langle\cdot,\cdot\rangle:V\otimes V\rightarrow \mathcal{O}_X$. Given a subbundle $E$ of $V$ over $X$, we define its orthogonal complement $E^\perp\subset V$ with respect to the form as the subbundle whose fiber at $x\in X$ is given by 
$(E^\perp)_x = \{ v \in V_x \mid \langle v, e \rangle = 0 \ \text{for all } e \in E_x \}.$
A vector bundle $E$ is isotropic if the vector bundle vanishes on the form, or equivalently $E\subset E^\perp$. 

Let $\bar{a}:=-a$ denote the negative of $a$. We fix a complete flag
\begin{equation}\label{flagC}
F_\bullet:0\;\scalebox{1.2}{$\subset$}\; F_{n}\;\scalebox{1.2}{$\subset$}\; F_{n-1}\;\scalebox{1.2}{$\subset$}\; \cdots \;\scalebox{1.2}{$\subset$}\; F_1\;\scalebox{1.2}{$\subset$}\; F_{\bar{1}}\;\scalebox{1.2}{$\subset$}\;\cdots\;\scalebox{1.2}{$\subset$}\; F_{\bar{n}}=V
\end{equation}
of isotropic subbundles of $V$ over $X$. Here each $F_i$ has rank $n+1-i$ and $(F_{i+1})^\perp=F_{\bar{i}}$, $\mathrm{rk}(F_{\bar{i}})=n+i$ for all $i\geq 1$. We note that $F_1$ is a Lagrangian subbundle of $V$.

Fix $1< p\leq n$. We consider a sequence of integers
\[
\mathbf{q}=\left( n\geq q_1> q_2>\cdots> q_a>0> q_{a+1}>\cdots> q_s> -n\right)
\] 
whose length is $s\leq n-p+1$, satisfying that $q_1,\ldots, q_a, -q_{a+1},\ldots,-q_{s}$ are distinct. Let 
\begin{equation}\label{eqn:rho}
\rho_i=\#\left\{j\;|\; q_j\geq 1-q_i \;\text{for}\; i>j\geq1\right\}.
\end{equation} 
Let $U:=U_p\subset V$ be an isotropic subbundle of $V$ over $X$ whose rank is $n+1-p$ for $1\leq p\leq n$. Given a complete flag $F_\bullet$ as in \eqref{flagC}, we define the {\it isotropic degeneracy locus} by
\[
\Omega_\lambda^C:=\{x\in X\;|\;\mathrm{dim}(U \;\scalebox{1.2}{$\cap$}\; F_{q_i})|_x\geq i, i=1,\ldots, s\}.
\]
Here, the partition $\lambda:=\lambda(p,\mathbf{q})=(\lambda_1\geq\cdots\geq\lambda_s\geq0)$ of length $s$ is defined by
\begin{equation}\label{eqn:lamb}
\lambda_i=\begin{cases}
q_i+p-1& if\; i\leq a\\
q_i+p-1+i-\rho_i& if\; i>a
\end{cases}
\end{equation}
of positive integers such that $\lambda$ is $(p-1)$-strict. That is, $\lambda_i>\lambda_{i+1}$ whenever $\lambda_i>p-1$.
For $p=1$, we take 
$
n\geq q_1>\cdots>q_s>0\quad\text{and}\quad
\lambda_i=q_i\quad\text{for all $i$.}
$
 It is worthwhile to note that $\Omega_\lambda^C$ is the special case of \cite{AF} with $p_i=p$ for all $i$.

The following proposition confirms that our definition coincides with the loci studied in the existing literature \cite{HIMN}.

\begin{prop}
The conditions defining the loci discussed in \cite[Definition 5.5]{HIMN} corresponds to the ones for our locus $\Omega_\lambda^C$.
\end{prop}
\begin{proof}
The number 
\begin{align*}
\gamma_i:&=\#\{j\;|\;1\leq j<i,\;\lambda_i+\lambda_j>2(p-1)+i-j\}\\
&=\#\{j\;|\;1\leq j<i,\;\chi_i+\chi_j\geq0\}
\end{align*}
in \cite[Definition 5.1]{HIMN} is exactly the same as $\rho_i$ with $k=p-1$, $\chi_i=q_i-1$ if $q_i>0$ and $\chi_i=q_i$ if $q_i<0$. In addition, $\gamma_i=i-1$ if $q_i>0$.
\end{proof}

\subsection{Motivic classes}
This subsection deals with the fundamental computation of the motivic Chern and Hirzebruch class of (isotropic) Grassmannian bundles and the basic case of the corresponding degeneracy loci.

For a vector bundle $\mathcal{V}$ on $X$, the Grassmannian bundle $\pi:Gr(d,\mathcal{V})\rightarrow X$ is the set of rank $d$ bundles of $\mathcal{V}$. Let us consider the exact sequence
\[
0\rightarrow \mathcal{S}\rightarrow \mathcal{V}\rightarrow \mathcal{Q}\rightarrow 0
\]
of the tautological bundles over $Gr(d,\mathcal{V})$. ($\mathcal{V}$ should be understood as $\pi^*(V)$. We omit notation for such pullbacks.) Then by virtue of \cite[Lemma 2.3]{ACT}, we have 
\begin{equation}\label{Grass}
mC(id_{Gr(d,\mathcal{V})})=\lambda_y\left(((\mathcal{S}^\vee\otimes \mathcal{Q})^\vee\right)\cdot \pi^*mC(id_X),\;\text{and}\;T_y(id_{Gr(d,\mathcal{V})})=T_y((\mathcal{S}^\vee\otimes \mathcal{Q})\cdot \pi^*T_y(id_X)
\end{equation}
over $X$.

The following lemma treats the basic case where the motivic Hirzebruch class of the degeneracy loci is computed directly. Let $L$ be a line bundle and $E\subset V$ a subbundle of rank $e$ on $X$.
\begin{lemma}\label{lem3.3}
If the locus $\iota: Z\hookrightarrow X$ where a map $L\rightarrow V/E$ vanishes is smooth of the expected codimension $rk(V/E)$, then its motivic Hirzebruch class is
\[
\iota_*T_y(Z)=\dfrac{1}{T_y(V/E\otimes R)}c_{2n-e}(V-E-L)\scalebox{1.2}{$\cap$}\; T_y(X),
\]
where $R$ is the raising operator acting on $c(V-E-L)$.
\end{lemma}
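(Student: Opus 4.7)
The plan is to realize $Z$ as the transverse vanishing locus of a section of a rank $2n-e$ vector bundle, apply Lemma \ref{lem2.1}, and then recognize the resulting expression as the stated raising-operator formula via a short Chern-root computation.

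First, the datum of a map $L \to V/E$ is the same as a section $s$ of the vector bundle $\mathcal{E} := L^\vee \otimes (V/E)$ of rank $2n-e$, and $Z = \{s = 0\}$. The smoothness hypothesis (together with the implicit codimension count $2n-e$) makes $s$ a regular section in the sense of Lemma \ref{lem2.1}, which then gives
\[
\iota_* T_y(Z) = \frac{c_{\mathrm{top}}(\mathcal{E})}{T_y(\mathcal{E})} \cap T_y(X) = \frac{c_{2n-e}(L^\vee \otimes V/E)}{T_y(L^\vee \otimes V/E)} \cap T_y(X).
\]

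To match this with the stated expression, let $\alpha_1,\ldots,\alpha_{2n-e}$ denote the Chern roots of $V/E$ and write $\gamma = c_1(L)$. Then
\[
c_{\mathrm{top}}(L^\vee \otimes V/E) = \prod_i(\alpha_i - \gamma), \qquad T_y(L^\vee \otimes V/E) = \prod_i Q_y(\alpha_i - \gamma).
\]
From $c(V-E-L) = c(V/E)(1+\gamma)^{-1}$ and the vanishing $c_m(V/E) = 0$ for $m > 2n-e$, a direct expansion yields the collapse identity
\[
c_{2n-e+k}(V-E-L) = (-\gamma)^k\prod_i(\alpha_i - \gamma) = (-\gamma)^k\, c_{\mathrm{top}}(L^\vee \otimes V/E) \qquad (k \geq 0).
\]

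Finally, I would expand $1/T_y(V/E \otimes R) = \sum_{k\geq 0} b_k R^k$ as a formal power series in $R$ whose coefficients are symmetric in the $\alpha_i$, and apply the raising rule $R \cdot c_j(V-E-L) = c_{j+1}(V-E-L)$. Using the collapse identity above,
\[
\tfrac{1}{T_y(V/E\otimes R)}\, c_{2n-e}(V-E-L) = c_{\mathrm{top}}(L^\vee \otimes V/E)\sum_k b_k(-\gamma)^k = \frac{c_{\mathrm{top}}(L^\vee \otimes V/E)}{T_y(L^\vee \otimes V/E)},
\]
where the last step is the substitution $R \mapsto -\gamma$ in the power series for $1/T_y(V/E \otimes R)$. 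Combining this with the first display gives the claim. The only non-formal ingredient is the collapse identity, which is an immediate consequence of the vanishing of Chern classes above the rank; thus I do not expect any serious obstacle, as everything else is essentially bookkeeping with the raising-operator conventions of \S\ref{sec2}.
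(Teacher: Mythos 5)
Your proposal is correct and follows essentially the same route as the paper: both realize $Z$ as the transverse zero locus of a section of $L^\vee\otimes V/E$, invoke Lemma~\ref{lem2.1}, and then convert the resulting $c_{\mathrm{top}}/T_y$ expression to the raising-operator form via the collapse identity $c_{2n-e+k}(V-E-L)=(c_1(L^\vee))^k c_{2n-e}(V-E-L)$, with $c_1(L^\vee)$ playing the role of $R$. Your version simply makes the Chern-root bookkeeping explicit, whereas the paper states the collapse identity and the substitution $R\mapsto c_1(L^\vee)$ without spelling out the expansion.
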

\begin{proof}
By the hypothesis, the section of $L^\vee\otimes(V/E)$ is regular, so that $Z$ is a local complete intersection with normal bundle $N_Z\cong L^\vee\otimes(V/E)$. This implies  the identities $c_{2n-e}(L^\vee\otimes (V/E))=c_{2n-e}(V/E-L)$ and
\[ \left(c_1(L^\vee)\right)^kc_i(V/E-L)=c_{i+k}(V/E-L)
\]
for $k\geq0$ and $i\geq 2n-e=rk(N_Z)$. The first Chern class $c_1(L^\vee)$ here plays the role of the raising operator $R$, see \cite[pg. 3]{AF}. Hence, we get the statement via Lemma \ref{lem2.1}.
\end{proof}

Since $E \subset V$ is a subbundle, the quotient $V/E$ represents the same class as $V-E$ in $K_0(X)$. We therefore freely identify $V/E$ and $V-E$ when applying the Hirzebruch class $T_y$, and likewise for Chern classes.

\subsection{Resolution of singularities}\label{sec3.2}
We continue to use the notation introduced in the definition of $\Omega_\lambda^C$, namely, $\lambda, \mathbf{q}, s, p, a$ the vector bundles $V, U$ and the flag $F_\bullet$, as in the beginning of \S \ref{sec3}. 

The case that follows examines the motivic Hirzebruch class of the push-forward of the resolution $\widetilde{\Omega}_\lambda$ of the degeneracy loci. 
We first construct the tower of projective bundles
\begin{equation*}\label{eqn3.3}
X=:X_0\xleftarrow{\pi_1} X_1:=\mathbb{P}(U)\xleftarrow{\pi_2} X_2:=\mathbb{P}(U/D_1)\xleftarrow{\pi_3} \cdots\xleftarrow{\pi_s} X_{s}:=\mathbb{P}(U/D_{s-1})
\end{equation*}
where $D_i/D_{i-1}$ is the tautological line bundle on $X_{i}$ and $rk(D_i)=i$ and $D_0=0$. We simplify the notation by omitting the pull-backs from the natural projections $\pi_i$. That is, $U$ means $\pi_i^*U$ on $X_i$, and similarly $D_{i-1}$ means $\pi_i^*D_{i-1}$ on $X_i$. We further remark that, for the rest of the paper, such pullbacks of bundles will be omitted from the notation.

 The variety $X_i$ is, in fact, parametrized by the filtration
$
D_1\subset D_2\subset\cdots\subset D_i
$
of subbundles of $U$, where $\mathrm{rk}(D_j)=j$ for $j=1,\ldots,i$  (\cite[p.1800]{ACT}).
We denote by $\pi_i:X_i\rightarrow X_{i-1}$ be the projection, and $\pi=\pi_s\circ\cdots\circ \pi_1$ be the composition of $\pi_i$. 

On $X_i$, we define a locus
\[
Z_i=\{(x,D_1\;\scalebox{1.2}{$\subset$}\; D_2\;\scalebox{1.2}{$\subset$}\; \cdots\;\scalebox{1.2}{$\subset$}\; D_i)\;|\;(D_j\;\scalebox{1.2}{$\subseteq$}\; F_{q_j})|_x\;\text{for all }\; j\leq i\},
\]
with natural inclusion map $Z_i\hookrightarrow X_i.$ Since $D_j$ is subbundle of $U$, the condition for $Z_i$ can be regarded as 
$D_j\subseteq (F_{q_j}\cap U)|_x$ for all $j\leq i$.
Then the restriction map $\phi$ of $\pi$ sends $\widetilde{\Omega}_\lambda=Z_s$ birationally onto $\Omega_\lambda^C\subset X$ and the map $\widetilde{\Omega}_\lambda\rightarrow \Omega_\lambda^C$ resolves the singularities as in \cite[\S 2.3, p.1749 (Intro.)]{AF}, \cite[\S2.3]{AF} and \cite{KL}. 
It is worth noting that the following diagram commutes:
\begin{equation}\label{digm1}
\begin{tikzcd}
\widetilde{\Omega}_\lambda\arrow[r, hookrightarrow, "\iota"] \arrow[d, "\phi"'] & X_s \arrow[d, "\pi"] \\
\Omega_\lambda^C \arrow[r, hookrightarrow, ""] & X
\end{tikzcd}
\end{equation}

In addition, we may view that the locus $Z_1$ is the zero section of $\mathrm{Hom}(D_1,V/F_{q_1})$ on $X_1$, and $Z_i$ is the zero locus of the corresponding section of $\mathrm{Hom}(D_i/D_{i-1},D_{i-1}^\perp/F_{q_i})$ for $i\leq a$ and $\mathrm{Hom}(D_i/D_{i-1},D_{\rho_i}^\perp/F_{q_i})$ for $i>a$ on $\pi_{i}^{-1}(Z_{i-1})\subseteq X_i$ for $1<i\leq s$ (\cite[\S2.2]{AF}, \cite[Proof of Lemma 5.10]{IMN}).

Let $\ell$ be a positive integer and $\rho=(\rho_1,\ldots,\rho_\ell)$ a sequence of nonnegative integers with $\rho_i<i$. For a unimodal sequence $\rho=(\rho_1,\ldots,\rho_\ell)$, a partition $\lambda=(\lambda_1\geq\cdots\geq \lambda_\ell)$ is said to be $\rho${\it-strict} if the sequence $\mu_i=\lambda_i+\rho_i$ is non-increasing (\cite[p.9]{AF}). 

We remark that the sequence $\rho$ defined by \eqref{eqn:rho} is in fact unimodal and the partition $\lambda(p,\mathbf{q})$ is $\rho$-strict.

 Given symbols $c(1),\ldots,c(\ell)$ and a $\rho$-strict partition $\lambda$, the 
{\it theta-polynomial} is defined by
\[
\Theta_\lambda^\rho(c(1),\ldots,c(\ell))=\dfrac{\prod_{1\leq j<i\leq \ell}(1-T_{ij})}{\prod_{1\leq j\leq\rho_i<i\leq\ell}(1+T_{ij})}c(1)_{\lambda_1}\cdots c(\ell)_{\lambda_\ell},
\]
where $T_{ij}$ is defined as in the Preliminaries.
We note that $(1-T_{ij})/(1+T_{ij})=1+2\sum_{k>0}(-1)^kT_{ij}^k.$
In particular, in case of $\rho_i=i-1$, the theta polynomial is a Schur Pfaffian, and when it comes to $\rho=\emptyset$, $\Theta(\rho)_\lambda$ is a Schur determinant. See \cite[Appendix A.2.]{AF} for more details on the notations and theta-polynomials.

We now state one of our main results of this paper. In the statement, $R_i$ denotes the class on $X$ induced by $c_1((D_i/D_{i-1})^\vee)$. Moreover, the symbols $R_i,$ and $V-F_{q_i}-U$ in the following statement are understood as the corresponding classes or virtual bundles on $X$, obtained from the resolution construction. Recall also that $T_y(R_i)=Q_y(R_i)$.

\begin{thm}\label{mainC}
Let $\widetilde{\Omega}_\lambda$ be the resolution of $\Omega_\lambda^C$ and $\iota:\widetilde{\Omega}_\lambda\hookrightarrow X_s$ be the natural inclusion. 
Let $s\leq n+1-p$. The class of the resolution $(\pi\iota)_*T_y(\widetilde{\Omega}_\lambda)$ is given by 
\[
 \dfrac{\prod_{j\leq \rho_i}T_y(R_i+R_{j})}{\prod_{j<i}T_y(R_i-R_j)} \prod_{1\leq i\leq s}\dfrac{1}{T_y(R_i\otimes(V-F_{q_i}-U))}\Theta_\lambda^\rho(c(1),\ldots,c(s))\scalebox{1.2}{$\cap$}\; T_y(X).
\]\label{mainC1}
where $c(i)=c(V-F_{q_i}-U)$ for $i=1,\ldots,s$.
\end{thm}
 In the above theorem, $T_y(R_i+R_j)$ is expressed by the series $Q_y(R_i+R_j)$ and so does $T_y(R_i-R_j)$. 
The subsequent sections focus on the proof of Theorem \ref{mainC}.

\subsubsection{Proof of Theorem \ref{mainC}}
We start with the proof of Theorem \ref{mainC} \eqref{mainC1}. Before proceeding, we need the following lemma regarding the motivic Hirzebruch class for the projection $\pi_i$ on $X_i$.
\begin{lemma}\label{lem3.5}
Let $\pi_i:X_i\rightarrow X_{i-1}$ be the projections for $1\leq i\leq s$ as before. If $s<n+1-p$, then we have
\[
T_y(X_i)= T_y((D_i/D_{i-1})^\vee\otimes U/D_i))\scalebox{1.2}{$\cap$}\; \pi_i^*T_y(X_{i-1})
\]
for all $i$.
 If $s=n+1-p$, then 
$
T_y(X_i)= T_y((D_i/D_{i-1})^\vee\otimes U/D_i))\cap \pi_i^*T_y(X_{i-1})$ for $1\leq i\leq s-1$, and
\[
T_y(X_s)= \pi_s^*T_y(X_{s-1}).
\]
\end{lemma}

\begin{proof}
In the case of $s<n+1-p$, by the construction, $\pi_i:X_i\rightarrow X_{i-1}$ for $1\leq i\leq s$ is a projective bundle with its fiber satisfying the condition 
$D_{i-1}\subset D_i\subset U$ such that
\[
D_i/D_{i-1}\;\scalebox{1.2}{$\subset$}\; U/D_{i-1}.
\]
So, by Verdier-Riemann-Roch theorem and \eqref{Grass}, we have the first statement. 

As for the second statement, the case of $1\leq i\leq s-1$ follows directly by the first statement. The assumption $s=n+1-p$ implies that $U/D_{s-1}$ is of rank $1$, and thus $X_s=X_{s-1}$. 
So, the map $\pi_s:X_{s}\rightarrow X_{s-1}$ becomes a trivial $\mathbb{P}^0$-bundle with fiber a point.
Thus, we have
\begin{align*}
T_y(X_s)=T_y\left(0\right)\scalebox{1.2}{$\cap$}\; \pi_s^*T_y(X_{s-1})=1\scalebox{1.2}{$\cap$}\; \pi_s^*T_y(X_{s-1})=\pi_s^*T_y(X_{s-1}),
\end{align*}
since $(U/D_{s})=0$, so that $(D_s/D_{s-1})^\vee\otimes U/D_s=0$. 
\end{proof}

Having proved the lemma, we are in a position to prove the first part of the theorem.

\begin{proof}[Proof of Theorem \ref{mainC}]
We first consider the case of $s<n+1-p$. 
By virtue of Lemma \ref{lem3.5} and Lemma \ref{lem3.3}, we have 
\begin{equation}\label{eqn3.60}
\iota_*T_y(\widetilde{\Omega})=\displaystyle\prod_{i=1}^s\dfrac{T_y((D_i/D_{i-1})^\vee\otimes U/D_i)}{T_y((D_i/D_{i-1})^\vee\otimes D_{\rho_i}^\perp/F_{q_i})}c_{\widetilde{\lambda}_i}(D_{\rho_i}^\perp-F_{q_i}-D_i/D_{i-1})\scalebox{1.2}{$\cap$}\;\pi^*T_y(X)
\end{equation}
where $\widetilde{\lambda}_i=\lambda_i+n-p+1-i$. We note that pullbacks of bundles are omitted from the notation. To be specific, the partition $\widetilde{\lambda}$ is realized as
\[
\widetilde{\lambda}_i=\begin{cases}
n+q_i-i&\text{if}\;q_i>0\\
n+q_i-\rho_i&\text{if}\;q_i<0.
\end{cases}
\]
 
 When it comes to the case $s=n+1-p$, we know $X_s\cong X_{s-1}$, so that the locus $Z_s$ can be regarded the locus in $Z_{s-1}$, in other words, $Z_{s-1}\supseteq Z_{s}$ on $X_s\cong X_{s-1}$. As $D_s=U$ is isotropic, we get $U\subset D_{s-1}^\perp$. Thus, $Z_{s}$ is defined by the condition that $D_s/D_{s-1}\rightarrow D_{s-1}^\perp/(F_{q_s}\cap D_{s-1}^\perp)$ is zero. If $s\leq a$, then $F_{q_s}\subseteq D_{s-1}^\perp$ and if $s>a$, then $D_{s-1}^\perp/(F_{q_s}\cap D_{s-1}^\perp)=D_{\rho_s}^\perp/F_{q_s}$ \cite[\S2.2]{AF}. So, we have the same expression as \eqref{eqn3.60}.
 
 We also note that $D_{\rho_i}^\perp=(V/D_{\rho_i})^\vee$. Combined with the fact that $c_1((D_i/D_{i-1})^\vee)$ acts as $R_i$ on $c_{\widetilde{\lambda}_i}(D_{\rho_i}^\perp-F_{q_i}-D_i/D_{i-1})$ and properties of $T_y(\cdot)$, we express the motivic Hirzebruch class of the bundle $(D_i/D_{i-1})^\vee\otimes U/D_i$ over the motivic Hirzebruch class of $(D_i/D_{i-1})^\vee\otimes D_{\rho_i}^\perp/F_{q_i}$ as
\begin{equation}\label{eqn3.4}
\dfrac{T_y((D_i/D_{i-1})^\vee\otimes U/D_i)}{T_y((D_i/D_{i-1})^\vee\otimes D_{\rho_i}^\perp/F_{q_i})}=\dfrac{T_y(R_i\otimes D_{\rho_i}^\vee)}{T_y(R_i\otimes(V-F_{q_i}-U))\cdot T_y(R_i\otimes D_i)}. 
\end{equation}
As in \cite[pg. 1804]{ACT}, the expression for $T_y(R_i\otimes D_i)$ is given by 
\begin{align*}
T_y(R_i\otimes D_i)&=T_y(R_i-c_1(D_1^\vee))\cdots T_y(R_i-c_1((D_i/D_{i-1})^\vee))\\
&=T_y(R_i-R_1)\cdots T_y(R_i-R_{i-1}),
\end{align*}
and by a similar procedure we arrive at
\begin{align*}
T_y(R_i\otimes D_{\rho_i}^\vee)&=T_y(R_i+c_1(D_1^\vee))\cdots T_y(R_i+c_1((D_{\rho_i}/D_{\rho_i-1})^\vee))\\
&=T_y(R_i+R_1)\cdots T_y(R_i+R_{\rho_i}).
\end{align*}
Additionally, followed by \cite[Lemma 5.16]{HIMN}, \cite[\S 2.2, \S 2.3]{AF}, we obtain
\begin{align*}
\pi_*\left(\prod_{i=1}^s c_{\widetilde{\lambda}_i}(D_{\rho_i}^\perp-F_{q_i}-D_i/D_{i-1})\right)&=\dfrac{\prod_{1\leq j<i\leq s}(1-T_{ij})}{\prod_{1\leq j\leq\rho_i<i\leq s}(1+T_{ij})}\cdot c_{\lambda_1}(1)\cdots c_{\lambda_s}(s)\\
&=\Theta_\lambda^\rho(d(1),\ldots,d(s))
\end{align*}
where $d(i)=c(V-U-F_{q_i})$. Putting these all together establish the statement.
\end{proof}

As a corollary, specializing the sequence $\mathbf{q}$ to consist of all positive integers yields the following formula involving the {\it Pfaffian}. One may see \cite[Appendix A.1]{AF} for the definition of the Pfaffians and their properties. 
\begin{cor}\label{cor3.6}
Let $q_i>0$ for all $i$. Then $\rho_i=i-1$, and thus the class $(\pi\iota)_*T_y(\widetilde{\Omega}_\lambda)$ is given by
\[
\prod_{j\leq i-1}\dfrac{T_y(R_i+R_{j})}{T_y(R_i-R_j)}\prod_{1\leq i\leq s}\dfrac{1}{T_y(R_i\otimes(V-F_{q_i}-U))}\mathrm{Pf}_\lambda(c(1),\ldots,c(s))\scalebox{1.2}{$\cap$}\; T_y(X),
\]
where
$
c(i)=c(V-F_{q_i}-U)
$ for $i=1,\ldots,s$
, if $s$ is even, 
 and if $s$ is odd, 
 \[
c(i)=
\begin{cases}
\dfrac{c(V-F_{q_i}-U)}{T_y(R_i\otimes(V-F_{q_i}-U))}&1\leq i\leq s\\
1& i=s+1
\end{cases}
\]
\end{cor}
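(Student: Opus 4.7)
The plan is to derive this corollary as a direct specialization of Theorem \ref{mainC}\eqref{mainC1} to the subcase where every entry of $\mathbf{q}$ is positive, combined with the standard identification of the theta polynomial with a Schur Pfaffian when $\rho_i = i-1$.

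First I would verify the combinatorial claim that $q_i > 0$ for all $i$ forces $\rho_i = i-1$. From the definition \eqref{eqn:rho}, $\rho_i$ is the number of indices $j < i$ with $q_j \geq 1 - q_i$. Since $q_i \geq 1$, the threshold satisfies $1 - q_i \leq 0$, while the hypothesis gives $q_j \geq 1$ for every $j < i$; hence every such $j$ is counted and $\rho_i = i-1$. Feeding this into the prefactor of Theorem \ref{mainC}\eqref{mainC1}, the numerator $\prod_{j \leq \rho_i} T_y(R_i + R_j)$ becomes $\prod_{j \leq i-1} T_y(R_i + R_j)$, matching the stated expression exactly; the denominator $\prod_{j<i} T_y(R_i - R_j)$ and the product $\prod_{1\leq i \leq s} 1/T_y(R_i \otimes (V - F_{q_i} - U))$ require no modification.

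The substantive step is recognizing the specialized theta polynomial as a Pfaffian. With $\rho_i = i-1$, the condition $1 \leq j \leq \rho_i < i \leq \ell$ in the denominator of
\[
\Theta_\lambda^\rho(c(1),\ldots,c(s)) = \frac{\prod_{1 \leq j < i \leq s}(1 - R_j/R_i)}{\prod_{1 \leq j \leq \rho_i < i \leq s}(1 + R_j/R_i)} \, c(1)_{\lambda_1} \cdots c(s)_{\lambda_s}
\]
collapses to $1 \leq j < i \leq s$, so the rational operator reduces to $\prod_{1 \leq j < i \leq s} (1 - R_j/R_i)/(1 + R_j/R_i)$. This is precisely the raising-operator kernel of the Schur Pfaffian, as recorded in the paragraph following the definition of $\Theta_\lambda^\rho$ and in \cite[Appendix A.2]{AF}. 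Invoking this identification yields $\mathrm{Pf}_\lambda(c(1),\ldots,c(s))$ with $c(i) = c(V - F_{q_i} - U)$, which is the even-$s$ half of the corollary.

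The only remaining subtlety, and the one place where a small amount of care is needed, is the parity bookkeeping when $s$ is odd. Since a Pfaffian is intrinsically defined on an index set of even size, one pads the family by an auxiliary entry $c(s+1) = 1$ and absorbs the factors $1/T_y(R_i \otimes (V - F_{q_i} - U))$ into $c(i)$ for $1 \leq i \leq s$, so that the pairings involving the auxiliary index $s+1$ (which carry only the first half of the product) recover exactly the missing prefactor. This is a standard rewriting of a Pfaffian identity in odd length and accounts for the second clause in the statement. Apart from this formal adjustment, no further computation beyond that already carried out in Theorem \ref{mainC} is needed; the main obstacle, such as it is, lies entirely in the even/odd bookkeeping rather than in any new geometric or cohomological input.
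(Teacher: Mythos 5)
Your argument is correct and is essentially the proof the paper intends; Corollary~\ref{cor3.6} is stated without proof precisely because it is, as you say, a direct specialization of Theorem~\ref{mainC}\eqref{mainC1}. The combinatorial step is right: with all $q_i\geq 1$, the threshold $1-q_i\leq 0$ is met by every $q_j\geq 1$ with $j<i$, so $\rho_i=i-1$; this collapses the denominator condition $1\leq j\leq\rho_i<i\leq s$ to $1\leq j<i\leq s$, and the paper's own remark following the definition of $\Theta_\lambda^\rho$ then identifies the resulting kernel $\prod_{j<i}(1-R_j/R_i)/(1+R_j/R_i)$ as the Schur Pfaffian. Two small points you might make explicit: you are invoking only part \eqref{mainC1}, so the case $s=n+1-p$ (which would carry the extra $T_y(2R_s)$ of part \eqref{mainC2}) is tacitly excluded, matching the paper's implicit restriction; and the odd-$s$ padding is the standard Pfaffian convention of appending a dummy index $s+1$ with $\lambda_{s+1}=0$ and $c(s+1)=1$ as in \cite[Appendix A.1]{AF}, after which absorbing the scalar raising-operator series $1/T_y(R_i\otimes(V-F_{q_i}-U))$ into $c(i)$ for $1\leq i\leq s$ is legitimate because those series commute with the Pfaffian kernel and act only on the $i$-th index.
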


\subsubsection{CSM classes of a resolution}
Here we present the CSM classes of the resolution for the isotropic Grassmannian degeneracy loci.  

Let $c(i)_j$ denote the degree $j$ term in $c(i)$, and $t$ be a variable. We define $c_t(i):=\sum_{j\geq0}c(i)_jt^j$. 
Let $\lambda=\lambda(p,\mathbf{q})$ be the partition associated to $p$ and the sequence $\mathbf{q}$, with the isotropic Grassmannian degeneracy loci $\Omega_\lambda^C$. Let $\widetilde{\Omega}_\lambda$ be the resolution of $\Omega_\lambda^C$ and $\iota:\widetilde{\Omega}_\lambda\rightarrow X_s$ be the natural inclusion as before. Then we have the following theorem:
\begin{thm}
Let $s\leq n+1-p$. The class of the resolution $(\pi\iota)_*c_{SM}(\widetilde{\Omega}_\lambda)$ is given by 
\[
\dfrac{\prod_{j\leq \rho_i}1+R_i+R_{j}}{\prod_{j<i}1+R_i-R_j} \prod_{1\leq i\leq s}\dfrac{(1+R_i)^{-ch(i)_0}}{c_{\frac{1}{1+R_i}}(i)}\Theta_\lambda^\rho(c(1),\ldots,c(s))\scalebox{1.2}{$\cap$}\; c_{SM}(X).
\]
Here $c(i)=c(V-F_{q_i}-U)$ for $i=1,\ldots,s$ and $ch(i)_0$ is the virtual rank of $V/F_{q_i}-U$.
\end{thm}
\begin{proof}
We apply Theorem \ref{mainC} with $y=-1$, and use \eqref{ACTlem4.2} to obtain 
\[
c(R_i\otimes (V-F_{q_i}-U))=(1+R_i)^{ch(i)_0}c_{\frac{1}{1+R_i}}(i),
\]
as desired. In particular, $ch(i)_0=p+1_i-2$ if $q_i>0$ and $p+q_i-1$ if $q_i<0$. 
\end{proof}

\subsection{Stratifications}\label{sec3.3}
This section is dedicated to proving Theorem \ref{thm3.10}, which represents the main theorem, Theorem \ref{mainStra} for isotropic degeneracy loci $\Omega_\lambda^C$. 

 For the isotropic degeneracy locus $\Omega_\lambda^C\subseteq X$ associated to 
\[
\mathbf{q}=\left(q_1> q_2>\cdots> q_a>0> q_{a+1}>\cdots> q_s\right)
\quad\text{and}\quad \lambda=\lambda(p,\mathbf{q}),
 \]
we consider a sub-locus 
$\Omega_{{\lambda}^+}\subseteq\Omega_\lambda^C$.
Let $\mathbf{k}$ be a weakly increasing sequence 
\[
\mathbf{k}=\left(0<k_1\leq k_2\leq \cdots\leq k_a\leq k_{a+1}\leq\cdots\leq k_s\right)
\] such that $i\leq k_i\leq \min\{n+1-q_i,n+1-p\}$, $k_{a+1}-k_a\leq -q_{a+1}+\mathrm{max}\{0,q_a+q_{a+1}-1\}$, and
\[
k_i-k_{i-1}+(\rho_{k_{i-1}}-\rho_{k_i})\leq q_{i-1}-q_i
\]
if $i>a+1$.
The partition 
 $\lambda^+=\lambda^+(p,\mathbf{q}, {\mathbf{k}})=(\lambda_1\geq\cdots\geq\lambda_s>0)$ is defined by
\begin{equation}\label{eqn:LamPlus}
\lambda_{k_i}^+=\begin{cases}
q_i+p-1& if\;\; i\leq a\\
q_i+p-1+k_i-\rho_{k_i}& if\;\; i>a,
\end{cases}
\end{equation}
and the remaining parts of $\lambda^+$ are filled minimally so as to be strict if $k<k_a$ and weak if $k>k_a$. If $k_{i}=k_{i+1}$, we set $\lambda_{k_i}=q_i+p-1$ if $i\leq a$, and $\lambda_{k_i}=q_i+p-1+k_i-\rho_{k_i}$ if $i>a$. 
The corresponding locally closed stratum is given by 
\begin{equation}\label{eqn3.5}
\Omega_{\lambda^+}^\circ:=\{x\in X\;|\;\mathrm{dim}(U\cap F_{q_i})|_x= k_i, i=1,\ldots, s\}.
\end{equation}
We write the inequality $\mathbf{k}'>\mathbf{k}$ when $k_i'>k_i$ for all $i$, and let $\lambda'=\lambda^+(p,\mathbf{q},\mathbf{k}')$. It is smooth of pure codimension $|\lambda^+|$ in $X$, satisfying
\begin{equation}\label{eqn3.6}
\Omega_{\lambda^+}^\circ=\Omega_{\lambda^+}\backslash\bigcup_{\mathbf{k}'>\mathbf{k}}\Omega_{\lambda'}
\end{equation}
In addition, the codimension of $\Omega_{\lambda^+}$ in $\Omega_\lambda^C$ is $|\lambda^+|-|\lambda|$, and the codimension one stratum in $\Omega_\lambda^C$ does not exist. 

All subsequent results are established under the assumptions stated above.
Let $X$ be proper. The {\it Hirzebruch $\chi_y$-genus} of $X$ is given by 
\[
\chi_y(X):=\int_XT_y\left(X\right)\scalebox{1.2}{$\cap$}\left[X\right]
\]
in $\mathbb{Q}[y]$. This $\chi_y$-genus recovers classical invariants: it equals the topological Euler characteristic when $y=-1$, the holomorphic Euler characteristic when $y=0$, and the signature when $y=1$, see \cite{BSY,ACT}. We also note that, for singular spaces, these specializations also fit into the general theme that global invariants can often be related to local contributions from singularities. For instance the specialization $y=-1$ is tied to Poincar\'e-Hopf type formulas and local invariants such as the Euler obstruction, while the case $y=1$ is connected with signature theories for singular spaces and singular $L$-classes. 

\begin{thm}\label{thm3.10}
Let $\widetilde{\Omega}_\lambda$ as in \S\ref{sec3.2}, with $\lambda=\lambda(p,\mathbf{q})$. Then we have
\[
(\pi\iota)_*T_y\left(\widetilde{\Omega}_{\lambda}\right)=\sum_{\mathbf{\mathbf{k}}}(-y)^{\overline{\mathbf{k}}}\iota_*T_y\left(\Omega_{\lambda^+}\right),
\]
where $\overline{\mathbf{k}}=\sum_{i=1}^s(k_i-i)$ and $\lambda^+=\lambda^+(p,\mathbf{q},\mathbf{k})$.
\end{thm}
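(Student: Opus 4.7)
The plan is to apply Lemma \ref{lem:fib} to the resolution $\phi\colon \widetilde{\Omega}_\lambda \to \Omega_\lambda^C$ equipped with the stratification $\Omega_\lambda^C = \bigsqcup_{\mathbf{k}}\Omega_{\lambda^+}^\circ$ from \S\ref{sec3.3}, paralleling the type A argument for vexillary loci in \cite{ACT}.

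First I would identify the fibers of $\phi$ over each stratum. Over a point $x \in \Omega_{\lambda^+}^\circ$ the fiber $\phi^{-1}(x)$ parametrises filtrations $D_1 \subset D_2 \subset \cdots \subset D_s \subset U_x$ with $\dim D_i = i$ and $D_i \subset (F_{q_i}\cap U)_x$, the latter space having dimension exactly $k_i$. Since $q_1 > q_2 > \cdots > q_s$ forces $F_{q_1} \subset F_{q_2} \subset \cdots \subset F_{q_s}$, the inclusion $D_{i-1} \subset F_{q_{i-1}} \cap U \subset F_{q_i}\cap U$ is automatic, so at each step one chooses a line in the $(k_i-i+1)$-dimensional quotient $(F_{q_i}\cap U)_x/D_{i-1}$. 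Thus $\phi^{-1}(x)$ is an iterated $\mathbb{P}^{k_i-i}$-bundle, smooth of dimension $\overline{\mathbf{k}}$, and $\phi$ is Zariski locally trivial over $\Omega_{\lambda^+}^\circ$. (The boundary case $s = n+1-p$ forces $k_s = s$, so the last stage is trivial and the description is uniform.) Hence
\[
\chi_y(F_\mathbf{k}) \;=\; \prod_{i=1}^s \chi_y(\mathbb{P}^{k_i-i}) \;=\; \prod_{i=1}^s \sum_{j=0}^{k_i-i}(-y)^j.
\]

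Next, Lemma \ref{lem:fib} supplies $\phi_*T_y(\widetilde{\Omega}_\lambda) = \sum_{\mathbf{k}} e_{\mathbf{k}}\,T_y(\Omega_{\lambda^+} \hookrightarrow \Omega_\lambda^C)$, with coefficients $e_{\mathbf{k}}$ determined inductively by $\chi_y(F_\mathbf{k}) = \sum_{\mathbf{k}' \leq \mathbf{k}} e_{\mathbf{k}'}$; here $\leq$ is the componentwise order on admissible sequences, which by \eqref{eqn3.6} coincides with the containment order on closed strata. Assuming the admissible set behaves as a product of chains in the region of interest, the factorization of $\chi_y(F_\mathbf{k})$ reduces the Möbius inversion to the one-variable telescoping $\sum_{j=0}^{m}(-y)^j - \sum_{j=0}^{m-1}(-y)^j = (-y)^m$, giving $e_\mathbf{k} = \prod_{i=1}^s(-y)^{k_i-i} = (-y)^{\overline{\mathbf{k}}}$. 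Pushing this forward along the inclusion $\Omega_\lambda^C \hookrightarrow X$ produces the claimed identity.

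The main obstacle is the Möbius step. The admissibility conditions of \S\ref{sec3.3} --- monotonicity $k_1 \leq \cdots \leq k_s$, the upper bounds $k_i \leq \min\{n+1-q_i,\,n+1-p\}$, and the $\rho$-inequalities $k_i-k_{i-1}+(\rho_{k_{i-1}}-\rho_{k_i}) \leq q_{i-1}-q_i$ for $i > a+1$ --- couple the coordinates $k_i$, so admissible $\mathbf{k}$'s do not a priori form a product of chains. One must verify either (i) that the admissibility region is locally a product on which the product-order Möbius function gives the correct coefficient, or (ii) that the sub-sequences $\mathbf{k}'$ excluded by these constraints correspond to empty strata, so they contribute trivially to $\chi_y(F_{\mathbf{k}}) = \sum_{\mathbf{k}' \leq \mathbf{k}} e_{\mathbf{k}'}$ and the telescoping still goes through. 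In \cite{ACT} this compatibility is transparent from the triple $\tau^+=(\mathbf{k},\mathbf{p},\mathbf{q})$ parametrisation, but the $(p-1)$-strict and $\rho$-strict conditions specific to the isotropic setting demand additional combinatorial bookkeeping, which is the principal technical hurdle.
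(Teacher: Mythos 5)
Your argument follows essentially the same route as the paper's: both apply Lemma~\ref{lem:fib} to the stratification $\Omega_\lambda^C = \bigsqcup_{\mathbf{k}}\Omega_{\lambda^+}^\circ$, identify the fiber of $\phi$ over each stratum, compute its $\chi_y$-genus, and then extract the coefficients from the recursion in Lemma~\ref{lem:fib}. Your description of the fiber as an iterated $\mathbb{P}^{k_i - i}$-bundle (using that $K_{k_1}\subseteq\cdots\subseteq K_{k_s}$ are nested) is the same geometric object that the paper describes as the type~A Schubert variety $\mathbb{S}_{\mu^+}\subset Fl(1,\dots,s;U)$; in particular your product formula $\chi_y(F_{\mathbf{k}}) = \prod_i\sum_{j=0}^{k_i-i}(-y)^j$ agrees with the cell count of $\mathbb{S}_{\mu^+}$, and the top cell $\mathbb{S}_{\mu^+}^\circ\cong\mathbb{A}^{\overline{\mathbf{k}}}$ matches your expected coefficient $(-y)^{\overline{\mathbf{k}}}$.

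The difference is in how the coefficients $e_{\mathbf{k}}$ are actually extracted from the recursion $e_{\mathbf{k}} = \chi_y(F_{\mathbf{k}}) - \sum_{\mathbf{k}'\lneq\mathbf{k}} e_{\mathbf{k}'}$. You try to carry this out directly by M\"obius inversion, correctly observe that the argument reduces to the combinatorial identity
\[
\chi_y\bigl(F_{\mathbf{k}}\bigr) \;=\; \sum_{\substack{\mathbf{k}'\leq\mathbf{k}\\ \mathbf{k}'\ \text{admissible}}} (-y)^{\overline{\mathbf{k}'}},
\]
and then flag — accurately — that this is not automatic: the product formula for $\chi_y(F_{\mathbf{k}})$ expands over the full box $\prod_i\{i,\dots,k_i\}$, whereas the right-hand side runs only over admissible $\mathbf{k}'$ (weakly increasing, $(p-1)$-strict-compatible, satisfying the $\rho$-inequalities), and these two index sets are \emph{not} the same in general. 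Put differently, the affine cells of $\mathbb{S}_{\mu^+(\mathbf{k})}$ need not be in bijection with the admissible $\mathbf{k}'\leq\mathbf{k}$ — some cells may be ``labeled'' by sequences that correspond to empty strata of $\Omega_\lambda^C$ — so the telescoping requires justification. This is exactly the delicate point; you have localized it correctly.

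The paper does not resolve this point any more explicitly than you do: it asserts that the coefficient $d_{(\mathbf{k})}$ produced by Lemma~\ref{lem:fib} equals $\chi_y(\mathbb{S}_{\mu^+}^\circ)$ and refers the reader to the proof of \cite[Theorem~5.1]{ACT} ``with minor notational modifications'' for the underlying combinatorial verification. Your option~(ii) — that the missing $\mathbf{k}'$ correspond to empty strata and hence contribute nothing — is not by itself a resolution, because empty strata are simply absent from the recursion in Lemma~\ref{lem:fib} rather than appearing with a cancelling contribution; what is actually needed, and what \cite{ACT} supplies in the type~A setting, is that the boundary $\mathbb{S}_{\mu^+(\mathbf{k})}\setminus\mathbb{S}_{\mu^+(\mathbf{k})}^\circ$ decomposes as $\bigsqcup_{\mathbf{k}'\lneq\mathbf{k}}\mathbb{S}_{\mu^+(\mathbf{k}')}^\circ$ with the indexing matching the stratification. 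So the gap you identify is genuine, but it is also the gap the paper itself chooses to outsource; your proof is correct in structure, and you have pinned down precisely the piece of type~A bookkeeping that must be adapted to the isotropic ($\rho$-strict) setting to complete it.
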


\begin{proof}
By the same argument as in the ordinary case \cite[\S5.1]{ACT}, the map $\phi:\widetilde{\Omega}_\lambda\rightarrow \Omega_\lambda^C$ in \eqref{digm1} is Zariski locally trivial on each locus $\Omega_{\lambda^+}^\circ$. Indeed, for all $x \in \Omega_{\lambda^+}^\circ$, the dimensions of the intersections $\dim(U \cap F_{q_i})_x = k_i$ are constant, and so the intersections $U\cap F_{q_i}$ define nested subbundles of $U$. After restricting to a Zariski open neighborhood, these bundles can be simultaneously trivialized, so the associated relative partial flag variety is locally a product with a fixed partial flag variety. The fibers of $\phi$ on $\Omega_{\lambda^+}^\circ$ are then given by fixed Schubert incidence conditions, and thus $\phi$ is Zariski locally trivial. 

In particular, for a generic point $x\in \Omega_{\lambda^+}$, the fiber of $\phi$ on $\Omega_{\lambda^+}$ is isomorphic to the Schubert variety
\[
\mathbb{S}_{\mu^+}:=\left\{(D_1\;\scalebox{1.2}{$\subseteq$}\; D_2\;\scalebox{1.2}{$\subseteq$}\;\cdots\;\scalebox{1.2}{$\subseteq$}\; D_{s})\;|\; D_j\;\scalebox{1.2}{$\subseteq$}\; K_{k_j}\;\text{for all }j\right\}
\]
which corresponds to a partition $\mu^+$, and is in the ordinary partial flag variety $Fl(1,2,\ldots,s; U)$. Here $K_0\subset K_1\subset \cdots\subseteq K_{n+1-p}$ is a fixed complete flag in $U=U_p$, and the subscript of $K_i$ indicates its dimension, i.e., $\mathrm{dim}\;K_i=i$. Notably, the subspaces $K_{k_i}$ can be identified with the fibers $(U\cap F_{q_i})|_x$ at a generic point $x\in \Omega_{\lambda^+}$. 

Since $\mathbb{S}_{\mu^+}$ is a type A Schubert variety, the remaining steps of the argument follow directly from the proof in \cite[Theorem 5.1]{ACT}, with some minor notational modifications. Specifically, 
the Schubert cell $\mathbb{S}_{\mu^+}^\circ\subset \mathbb{S}_{\mu^+}$ is shown to be isomorphic to the affine space $\mathbb{A}^{\sum_{i}^s(k_i-i)}$ through explicit coordinate computations. In particular, when $\mathbf{k}$ is strictly increasing, $\mathbb{S}_{\mu^+}$ corresponds to the partition $\mu^+=(k_s-s,\ldots, k_1-1)$.

By applying Lemma \ref{lem:fib}, we obtain
\[
(\pi\iota)_*T_y\left(\widetilde{\Omega}_{\lambda}\right)=\sum_{\mathbf{\mathbf{k}}}d_{(\mathbf{k})}\iota_*T_y\left(\Omega_{\lambda^+}\right),
\]
where $d_{(\mathbf{k})}$ represent the Hirzebruch $\chi_y$-genus of the Schubert cell $\mathbb{S}_{\mu^+}^\circ$. The definition of the Hirzebruch $\chi_y$-genus can be found in \cite{BSY}. Since $\mathbb{S}_{\mu^+}^\circ$ is isomorphic to $\mathbb{A}^{\sum_{i}^s(k_i-i)}$, we conclude that $d_{(\mathbf{k})}=(-y)^{\sum_{i}^s(k_i-i)}$, and the desired result follows.
\end{proof}

We now present an example that demonstrates Theorem \ref{thm3.10}.
\begin{ex}\label{ex3.11}
For $n=6$, $p=3$ and $\mathbf{q}=(5,2,\bar{1},\bar{4})$, with the partition $\lambda(p,\mathbf{q})=(7,4,2,1)$, the class $(\pi\iota)_*T_y(\widetilde{\Omega}_\lambda)$ is given, in terms of the stratification, by the following expression:
\begin{align*}
(\pi\iota)_*T_y(\widetilde{\Omega}_\lambda)=&\iota_*T_y(\Omega_{\mathbf{k}_0})-y\iota_*T_y(\Omega_{\mathbf{k}_1})-y\iota_*T_y(\Omega_{\mathbf{k}_4})+y^2\iota_*T_y(\Omega_{\mathbf{k}_2})\\
&+y^2\iota_*T_y(\Omega_{\mathbf{k}_5})-y^3\left(\iota_*T_y(\Omega_{\mathbf{k}_3})+\iota_*T_y(\Omega_{\mathbf{k}_6})\right)+y^4\iota_*T_y(\Omega_{\mathbf{k}_7}).
\end{align*}
Here we use $\mathbf{k}_i$ instead of $\lambda^+(p,\mathbf{q},\mathbf{k})$ for simplicity, and these $\mathbf{k}_i$ are the following:
\begin{align*}
\mathbf{k}_0=(1,2,3,4)\quad\mathbf{k}_1=(1,3,3,4)\quad\mathbf{k}_2=(1,3,4,4)\quad\mathbf{k}_3=(1,4,4,4)\\
\mathbf{k}_4=(2,2,3,4)\quad\mathbf{k}_5=(2,3,3,4)\quad\mathbf{k}_6=(2,3,4,4)\quad\mathbf{k}_7=(2,4,4,4)
\end{align*}
\end{ex}
The following is an example illustrating Theorem \ref{thm3.10} and from it one can calculate the motivic class of the locus $\Omega_{\lambda}$ associated to the partition $\lambda=(4,2,1)$ as below.  

\begin{ex}\label{main_illu}
We consider $\widetilde{\Omega}_{(4,2,1)}$ with $n=4,$ $p=2$ and $\mathbf{q}=(3,1,\overline{2})$. The strata for this locus $\Omega_{(4,2,1)}$ are $\Omega_{\lambda_0}$, $\Omega_{\lambda_1}$, $\Omega_{\lambda_2}$ and $\Omega_{\lambda_3}$, where 
\begin{align*}
\lambda_0=(4,2,1)\;\text{with}\; \mathbf{k}_1=(1,2,3),&\quad \lambda_1=(4,3,2)\;\text{with}\;\mathbf{k}_1=(1,3,3),\\
 \lambda_2=(5,4,1)\;\text{with}\;\mathbf{k}_2=(2,2,3),&\quad\lambda_1=(5,4,2)\;\text{with}\;\mathbf{k}_3=(2,3,3).
 \end{align*} Since the locus $\Omega_{\lambda_1}$ is associated to $\mathbf{q}'=(1,2,3)$ and $\mathbf{k}'=(3,2,1)$, its strata are given by $\Omega_{\lambda_1},\Omega_{\lambda_2}$ and $\Omega_{\lambda_4}$ where $\lambda_4=(4,3)$ which is from $\mathbf{k}_4=(2,3,3)$, and $\mathbf{q}'$. Applying Theorem \ref{thm3.10}, we have the following equations:
  \begin{align*}
  (\pi\iota)_*T_y(\widetilde{\Omega}_{\lambda_0})&=\iota_*T_y(\Omega_{\lambda_0})-y\iota_*T_y(\Omega_{\lambda_1})-y\iota_*T_y(\Omega_{\lambda_2})+y^2\iota_*T_y(\widetilde{\Omega}_{\lambda_3}),\\
    (\pi\iota)_*T_y(\widetilde{\Omega}_{\lambda_1})&=\iota_*T_y(\Omega_{\lambda_1})-y\iota_*T_y(\Omega_{\lambda_2})+y^2\iota_*T_y(\Omega_{\lambda_4}),\\
    (\pi\iota)_*T_y(\widetilde{\Omega}_{\lambda_2})&=\iota_*T_y(\Omega_{\lambda_2}), \;(\pi\iota)_*T_y(\widetilde{\Omega}_{\lambda_3})=\iota_*T_y(\Omega_{\lambda_3}),\;\text{and}\\
    (\pi\iota)_*T_y(\widetilde{\Omega}_{\lambda_4})&=\iota_*T_y(\Omega_{\lambda_4}).\\
  \end{align*}
  
    Then the motivic Hirzebruch class of $\Omega_{\lambda}$ is given by
    \begin{align}
  \begin{aligned}\label{eq:stra}
\iota_*T_y(\Omega_{\lambda})= &(\pi\iota)_*T_y(\widetilde{\Omega}_{\lambda_0})+y (\pi\iota)_*T_y(\widetilde{\Omega}_{\lambda_1})+(y^2+y) (\pi\iota)_*T_y(\widetilde{\Omega}_{\lambda_2})\\
&-y^2 (\pi\iota)_*T_y(\widetilde{\Omega}_{\lambda_3})-y^3 (\pi\iota)_*T_y(\widetilde{\Omega}_{\lambda_4}),
\end{aligned}
\end{align}
as described in Theorem \ref{mainCal}.
\end{ex}
Theorem \ref{mainC} enables us to express \eqref{eq:stra} on the right as a polynomial in the variable $y$ with coefficient given by the Chern classes of the given vector bundles. 
In addition, Theorem \ref{thm3.10} can be applied to compute the CSM classes of Schubert varieties in Lagrangian Grassmannian as follows.

\begin{ex}\label{ex3.12}
Let us consider the Lagrangian Grassmannian $X=LG(4,8)$ and a Schubert variety associated to a partition $\lambda=(4,2)$ with $p=1$ and $\mathbf{q}=(4,2)$. By the stratification, it has two strata: the stratum with $\lambda=(4,2)$  
and $\mathbf{k}=(1,3)$ (or $\lambda^+=(4,3,2)$). The formula for the degeneracy loci (or Schubert variety) associated to $\lambda=(4,2)$ results in the following computation 
\begin{align*}
\iota_*c_{SM}(S_\lambda)&=\left(
{\Large\Tableau[p]{&&&\\ ~&&}-2\;\Tableau[p]{&&&\\ ~&&\\~&~&}-5\;
\Tableau[p]{&&&\\~&&&}+11\;
\Tableau[p]{&&&\\~&&&\\~&~&&~}-19\;
\Tableau[p]{&&&\\~&&&\\~&~&&}+12\;\Tableau[p]{&&&\\~&&&\\~&~&&\\~&~&~&}}\;
\right)\scalebox{1.2}{$\cap$}\; c_{SM}(X)\\
&={\Large\Tableau[p]{&&&\\ ~&&}+
3\;\Tableau[p]{&&&\\ ~&&\\~&~&}+5\;
\Tableau[p]{&&&\\~&&&}+14\;
\Tableau[p]{&&&\\~&&&\\~&~&&~}+19\;
\Tableau[p]{&&&\\~&&&\\~&~&&}+6\;\Tableau[p]{&&&\\~&&&\\~&~&&\\~&~&~&}}\;.
\end{align*}
The last equality is from the Littlewood--Richardson rule for the Schubert structure coefficients of isotropic Grassmannians and the result coincides with the one computed by \cite{AM16}.
\end{ex}

We remark that the coefficients of the Schubert expansion of the CSM classes for Schubert varieties are non-negative: the case of the type A Grassmannian Schubert cells were proved in \cite{Huh16}, and that of Schubert cells in any homogeneous space $G/P$ are proved in \cite{AMSS23}.

\section{Odd Orthogonal Degeneracy Loci}\label{sec4}

Let $V$ be a rank $2n+1$ vector bundle on $X$, equipped with a non-degeneracy symmetric form. We take a flag of isotropic subbundles
\[
F_\bullet: 0\;\scalebox{1.2}{$\subset$}\; F_n\;\scalebox{1.2}{$\subset$}\; F_{n-1}\;\scalebox{1.2}{$\subset$}\; \cdots\;\scalebox{1.2}{$\subset$}\; F_1\;\scalebox{1.2}{$\subset$}\; F_0\;\scalebox{1.2}{$\subset$}\; F_{\bar{1}}\;\scalebox{1.2}{$\subset$}\;\cdots \;\scalebox{1.2}{$\subset$}\; F_{\bar{n}}= V
\]
on $X$ where $F_i$ are all isotropic with respect to the symmetric form, and rk$(F_i)=n+1-i$, $F_{\bar{i}}=(F_{i+1})^\perp$ for all $i$. We note that $F_1$ is a maximal isotropic subbundle of $V$. Let $U_p$ be an isotropic subbundle of $V$ on $X$ where the rank of $U_p$ is $n+1-p$ for some $1\leq p\leq n$.

A partition $\lambda=\lambda(p,\mathbf{q})$ and the sequence $\rho$ for this odd orthogonal case are defined in the same way as in the isotropic case.
The {\it odd orthogonal degeneracy locus} is 
\[
\Omega_\lambda^B:=\left\{ x\in X\;|\;\mathrm{dim}(U\cap F_{q_i})|_x\geq i, i=1,\ldots, s\right\}.
\]

The proof unfolds in the same way as in the isotropic degeneracy loci case. The fundamental computation of the orthogonal Grassmannian bundles and the relevant basic case are addressed first, and the motivic Hirzebruch class of the push-forward of the resolution and stratifications are then modified for this setting, thereby completing Theorem \ref{mainStra}. In the last part of this section, we discuss the motivic Hirzebruch class of the even maximal orthogonal degeneracy loci of type D.

\subsection{Motivic classes}
We say that a vector bundle $G$ of rank $t$ is isotropic if $t<n$ and coisotropic if $t>n$.  

Let $L$ be a line bundle, $E\subset V$ a subbundle of rank $e$, and $F\subset V$ a maximal isotropic bundle of rank $n$ on $X$. Let $M$ be the line bundle defined by
\[
M\cong F^\perp/F.
\] 
We will use the fact that for any isotropic subbudle $D\subset V$, $D^\perp$ can be identified with $(V/D)^\vee$ by the symmetric form.

The lemma below serves as the base case, corresponding to the degeneracy loci defined by the condition $L\subseteq E$:
\begin{lemma}\label{lem4.2}
Let $\iota: Z\hookrightarrow X$ be the locus where $L\subseteq E$.
If $e<n$, then its motivic Hirzebruch class is
\begin{equation}\label{eqn4.1}
2\iota_*T_y(Z)=\dfrac{1}{T_y((V/F^\perp\oplus F/E)\otimes R)}c_{2n-e}(V-E-L-M)\;\scalebox{1.2}{$\cap$}\; T_y(X).
\end{equation}
If $e>n$, we have
\[
\iota_*T_y(Z)=\dfrac{1}{T_y(V/E\otimes R)}c_{2n+1-e}(V-E-L)\;\scalebox{1.2}{$\cap$}\; T_y(X).
\]
Here $R=c_1(L^\vee)$ is the raising operator.
\end{lemma}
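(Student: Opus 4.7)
The plan is to prove the two cases separately: the coisotropic case $e > n$ follows from a direct application of Lemma \ref{lem2.1} to the natural section $L \to V/E$, closely paralleling the symplectic Lemma \ref{lem3.3}; the isotropic case $e < n$ requires an additional decomposition using the maximal isotropic flag $E \subseteq F \subseteq F^\perp \subseteq V$, and the factor of $2$ on the left-hand side ultimately reflects the quadratic nature of the isotropy condition, which is encoded by the anisotropic line bundle $M = F^\perp/F$ satisfying $M^{\otimes 2} \cong \mathcal{O}$.

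For $e > n$, the quotient $V/E$ has rank $2n+1-e$, and $Z$ is the scheme-theoretic vanishing locus of the regular section $L \to V/E$ of the rank-$(2n+1-e)$ bundle $L^\vee \otimes V/E$. Lemma \ref{lem2.1} applied directly gives
\[
\iota_*T_y(Z) = \frac{c_{\mathrm{top}}(L^\vee \otimes V/E)}{T_y(L^\vee \otimes V/E)} \scalebox{1.2}{$\cap$}\; T_y(X).
\]
Using the identity $c_k(L^\vee \otimes W) = c_k(W - L)$ for a rank-$k$ bundle $W$ (a consequence of \eqref{ACTlem4.2}) and treating $R = c_1(L^\vee)$ as a raising operator acting on $c(V-E-L)$, the right-hand side rewrites as $c_{2n+1-e}(V-E-L)/T_y(V/E \otimes R) \cap T_y(X)$, matching the stated formula.

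For $e < n$, choose $F$ compatibly so that $E \subseteq F$ (shrinking $X$ if needed), giving the filtration with graded pieces of ranks $e, n-e, 1, n$, and note the form identifies $V/F^\perp \cong F^\vee$. The condition $L \subseteq E$ breaks into three steps: (i) the vanishing of $\sigma_1 \colon L \to V/F^\perp$ (rank $n$), i.e.\ $L \subseteq F^\perp$; (ii) the vanishing of the \emph{quadratic} section $s^{\otimes 2} \colon L^{\otimes 2} \to M^{\otimes 2} \cong \mathcal{O}$, a section of $L^{-2}$ expressing the isotropy of the induced map $s \colon L \to M$ (which, given (i), is equivalent to $L \subseteq F$); and (iii) the vanishing of $\sigma_3 \colon L \to F/E$ (rank $n-e$). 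Packaging these into a section $\sigma = (\sigma_1, s^{\otimes 2}, \sigma_3)$ of
\[
\mathcal{E} \;:=\; (L^\vee \otimes V/F^\perp) \oplus L^{-2} \oplus (L^\vee \otimes F/E)
\]
of total rank $2n+1-e$, the scheme-theoretic zero locus of $\sigma$ is supported on $Z$ but carries multiplicity $2$ along $Z$, because $s^{\otimes 2}$ vanishes to order $2$ along $\{s=0\}$. The corresponding version of Lemma \ref{lem2.1} then yields
\[
2\,\iota_*T_y(Z) \;=\; \frac{c_{\mathrm{top}}(\mathcal{E})}{T_y(\mathcal{E})} \scalebox{1.2}{$\cap$}\; T_y(X).
\]
To match the stated form, one uses the virtual splitting $V \simeq F \oplus M \oplus F^\vee$ (which gives $c(V-E-L-M) = c(V/F^\perp)c(F/E)/c(L)$) together with the product identity $c_n(V/F^\perp - L) \cdot c_{n-e}(F/E-L) = c_{2n-e}(V-E-L-M)$ (verified directly from the raising-operator expansion). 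After absorbing the $L^{-2}$ contribution through this formalism, one recovers the expression $c_{2n-e}(V-E-L-M)/T_y((V/F^\perp \oplus F/E) \otimes R) \cap T_y(X)$.

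The main technical obstacle is the rigorous handling of the factor of $2$ arising from the non-transverse (quadratic) section $s^{\otimes 2}$: Lemma \ref{lem2.1} as stated assumes transversality, which fails at this middle component. One natural approach is an excess-intersection argument identifying the motivic Hirzebruch class of the scheme-theoretic zero locus (with its natural multiplicity) as $2\,T_y(Z_{\mathrm{red}})$; an alternative is to realize $Z$ as the image under a degree-$2$ map from an auxiliary isotropic flag bundle on which the analogous section does vanish transversally, and then to track the factor of $2$ through the pushforward. A secondary but delicate point is the algebraic reconciliation between the $c_1(L^{-2}) = 2R$ in the numerator and the $T_y(L^{-2}) = T_y(2R)$ in the denominator, which must be absorbed via the raising-operator formalism so that the final expression is free of an extraneous $T_y(2R)$ factor; this is the step I would scrutinize most carefully in a complete write-up.
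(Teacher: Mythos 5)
Your treatment of the coisotropic case $e>n$ coincides with the paper's: $Z$ is the transverse zero scheme of $L\to V/E$ and Lemma \ref{lem2.1} applies exactly as in Lemma \ref{lem3.3}. For $e<n$, however, your argument has a genuine gap, and it is not the one you flag as secondary. The stated formula \eqref{eqn4.1} has leading term $c_{2n-e}(V-E-L-M)=c_{\mathrm{top}}\bigl(L^\vee\otimes(V/F^\perp\oplus F/E)\bigr)$ in codimension $2n-e$; it therefore asserts that $Z$ has codimension $2n-e$ and that $2Z$ is the (non-reduced) zero scheme of the rank-$(2n-e)$ section of $L^\vee\otimes(V/F^\perp\oplus F/E)$ — the factor of $2$ records the multiplicity with which the quadric of isotropic lines meets $\mathbb{P}(F^\perp)$ along $\mathbb{P}(F)$ (the form on $F^\perp$ has radical $F$), and this is what the paper imports from \cite[Lemma 2.2]{HIMN}. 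Your construction instead adjoins the quadratic section $s^{\otimes 2}$ of $L^{-2}$ as an \emph{independent} rank-one summand, giving a bundle $\mathcal{E}$ of rank $2n+1-e$ and placing $Z$ in codimension $2n+1-e$. Concretely, your numerator is $c_{\mathrm{top}}(\mathcal{E})=c_1(L^{-2})\cdot c_{2n-e}(V-E-L-M)=2R\,c_{2n-e}(V-E-L-M)$ and your denominator carries an extra $T_y(L^{-2})=T_y(2R)$, so your answer differs from \eqref{eqn4.1} by the operator $2R/T_y(2R)$. Since $T_y(2R)=1+(1-y)R+O(R^2)$, this operator is not the identity — its leading term is the degree-one operator $2R$ — so the discrepancy cannot be ``absorbed via the raising-operator formalism''; the two formulas are incompatible already at the level of fundamental classes, where they place $2[Z]$ in different codimensions.

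Even granting your decomposition, the point you call the ``main technical obstacle'' is also unresolved as written: Lemma \ref{lem2.1} requires transversality, and the transformation $T_y$ is defined on $K_0(\mathcal{V}/X)$, so it does not see scheme-theoretic multiplicities; neither the excess-intersection remark nor the degree-two cover is carried out. The fix is not to repair the $L^{-2}$ factor but to drop it: the isotropy of $L$ is not an extra equation on top of $L\subseteq F^\perp$ and $L\to F/E$ vanishing — for $F$ maximal isotropic, $L\subseteq F$ is already cut out (with multiplicity two) by the rank-$n$ section $L\to V/F^\perp$, after which $L\to F/E$ imposes $n-e$ further transverse conditions. That is the content of the cited \cite[Lemma 2.2]{HIMN}, which together with Lemma \ref{lem2.1} is the entirety of the paper's proof in this case.
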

\begin{proof}
The case of $e<n$ follows by \cite[Lemma 2.2]{HIMN}, and Lemma \ref{lem2.1}. 
When $e>n$, $E$ is co-isotropic, so that the locus is (scheme-theoretically) defined by the vanishing of $L\rightarrow V/E$ (See \cite[\S 3.1]{AF}). As in the isotropic case, we have the second result, combined with Lemma \ref{lem2.1} and the zeros of the section $L^\vee\otimes V/E$.
\end{proof}

 In fact, we have $M\cong\mathrm{det}(V)$ and an isomorphism between the trivial line bundle and $M^2\cong M\otimes M$ induced by the non-degenerate symmetric form of $V$. This implies $c_1(M)=0$ in $A_*(X)[\frac{1}{2}]$.

\subsection{Resolution of singularities}
The construction of the resolution of singularities for the orthogonal degeneracy loci follows the same description as that for the isotropic degeneracy loci, except that we now consider isotropic subbundles of a rank $2n+1$ vector bundle over a variety $X$ equipped with a non-degenerate symmetric form (\cite[\S3, p.1749]{AF},\cite[\S6.3]{HIMN}, \cite[\S3]{A}). 
That is, the varieties $X_i = \mathbb{P}(U/D_{i-1})$ form an iterated sequence of projective bundles with natural projections $\pi_i: X_i \to X_{i-1}$, and we denote the composition by $\pi = \pi_s \circ \cdots \circ \pi_1$. 
The locus $Z_i$ is defined by the conditions $D_j \subseteq F_{q_j}$ for $1 \leq j \leq i$, with $i = 1, \ldots, s$, as before. We denote $Z_s = \widetilde{\Omega}_\lambda$ as the resolution of $\Omega_\lambda^B$, and set $Z_0 = X$.

Let $\iota:\widetilde{\Omega}_\lambda\rightarrow X_s$ be the natural inclusion. Suppose that $a$ is the maximal number where $q_i>0$ for all $i\leq a$. 
\begin{thm}\label{mainB}
The class of the push-forward of the resolution $(\pi\iota)_*T_y(\widetilde{\Omega}_\lambda)$ is given by 
\[
\dfrac{1}{2^a}\dfrac{\prod_{j\leq \rho_i}T_y(R_i+R_{j})\prod_{i=1}^aT_y(R_i)}{\prod_{j<i}T_y(R_i-R_j)}\Theta_\lambda^\rho(d(1),\ldots,d(s))\scalebox{1.2}{$\cap$}\; T_y(X),
\]
where $d(i)=c(V-F_{q_i}-U-M)/T_y(R_i\otimes(V-F_{q_i}-U-M))$ for $i=1,\ldots,a$ and $d(i)=c(V-F_{q_i}-U)/T_y(R_i\otimes(V-F_{q_i}-U))$ for $i=a+1,\ldots,s$. 
\end{thm}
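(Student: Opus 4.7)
The plan is to mirror the proof of Theorem~\ref{mainC}, adapting each step to the odd orthogonal setting with its symmetric (rather than skew) form. I would begin by computing $T_y(X_s)$ iteratively via Lemma~\ref{lem4.3}, obtaining
\[
T_y(X_s) = \prod_{i=1}^{s'} T_y\bigl((D_i/D_{i-1})^\vee \otimes U^\perp/D_i\bigr) \scalebox{1.2}{$\cap$}\; \pi^*T_y(X),
\]
where $s' = s$ when $s < n+1-p$, and $s' = s-1$ when $s = n+1-p$ (since in that case Lemma~\ref{lem4.3} contributes $T_y(\bigwedge\nolimits^2(D_s/D_{s-1})^\vee) = T_y(0) = 1$ for the last step). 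This already accounts for the upper bound $r = \min\{s, n-p\}$ in the product $\prod_{i=1}^r T_y(R_i)$ appearing in the statement.

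Next, I would apply Lemma~\ref{lem4.2} to each zero locus $Z_i \subset X_i$, splitting by the sign of $q_i$. For $i \leq a$, the bundle $F_{q_i}$ has rank $n+1-q_i \leq n$ and is isotropic, so the first case of Lemma~\ref{lem4.2} applies: it contributes a factor $\tfrac{1}{2}$, the extra $-M$ in both the Chern class and the $T_y$-denominator, and the Chern class $c_{\widetilde{\lambda}_i}(V - F_{q_i} - D_i/D_{i-1} - M)$. For $i > a$, the bundle $F_{q_i}$ is coisotropic and the second case applies, yielding $c_{\widetilde{\lambda}_i}(D_{\rho_i}^\perp - F_{q_i} - D_i/D_{i-1})$ with denominator $T_y(R_i \otimes (V - F_{q_i} - D_i))$. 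The accumulated factor $1/2^a$ and the presence of $M$ in $d(i)$ precisely for $i \leq a$ in the final statement then follow directly.

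The third step is the K-theoretic simplification. Using the raising-operator identification $c_1((D_i/D_{i-1})^\vee) = R_i$ and the symmetric-form isomorphism $D_{\rho_i}^\perp \cong (V/D_{\rho_i})^\vee$, I would expand
\[
T_y(R_i \otimes D_i) = \prod_{j<i} T_y(R_i - R_j), \qquad T_y(R_i \otimes D_{\rho_i}^\vee) = \prod_{j \leq \rho_i} T_y(R_i + R_j),
\]
as in the proof of Theorem~\ref{mainC}. The extra factor $\prod_{i=1}^r T_y(R_i)$ should emerge by combining the tangent bundle numerators $T_y(R_i \otimes U^\perp/D_i)$ with the $M$-corrections from the isotropic case of Lemma~\ref{lem4.2}, using $c_1(M) = 0$ in rational cohomology (forced by $M^{\otimes 2} \cong \mathcal{O}_X$) to collapse $T_y(R_i + c_1(M))$ to $T_y(R_i)$. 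Finally, I would push down along $\pi$ using the type-B analogue of \cite[Lemma~5.16]{HIMN} to convert the iterated push-forward of the product of Chern classes into the theta polynomial $\Theta_\lambda^\rho(d(1), \dots, d(s))$.

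The hardest part, I expect, will be the third step: the careful K-theoretic bookkeeping needed to track how the contributions from $U^\perp$ (replacing $U$ in type C), the $M$-terms arising only in the isotropic range $i \leq a$, and the symmetric-form identifications combine to produce precisely the clean factor $\prod_{i=1}^r T_y(R_i)$ with the correct upper bound. Handling the edge case $s = n+1-p$, where the last projective bundle becomes an odd maximal orthogonal Grassmannian bundle, will require separate verification that the formula specializes correctly and that the truncation of the product at $r = s-1$ in this regime is compatible with the $\bigwedge^2$ contribution from Lemma~\ref{lem4.3}.
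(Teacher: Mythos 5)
Your proposal is correct and follows essentially the same route as the paper: set up the iterated projective bundle and apply Lemma~\ref{lem4.3}, invoke Lemma~\ref{lem4.2} at each step (splitting into the isotropic case for $i\le a$, which yields the $1/2$ factors and the extra $-M$, and the coisotropic case for $i>a$), then simplify via the raising-operator identifications and push down via the type-B analogue of \cite[Lemma~5.16]{HIMN}, treating the maximal case $s=n+1-p$ separately via the trivial $\bigwedge^2$ contribution. One small clarification on a point you flag as the hardest: the factor $\prod_{i=1}^r T_y(R_i)$ arises solely from writing $U^\perp/D_i\cong (U/D_i)\oplus M$ in the tangent numerator (so that $T_y(R_i\otimes M)=T_y(R_i)$ since $c_1(M)=0$), uniformly in $i$; the $M$-correction from the isotropic case of Lemma~\ref{lem4.2} does \emph{not} contribute to this product but instead persists inside $d(i)$ for $i\le a$, as in \eqref{eqn4.3a}.
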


\begin{proof}
Lemma \ref{lem4.2} together with the discussion above Lemma \ref{lem3.5} results in 
\begin{align*}
2^a\iota_*T_y(\widetilde{\Omega})=&\left(\displaystyle\prod_{i=1}^a\dfrac{T_y((D_i/D_{i-1})^\vee\otimes U/D_i)}{T_y((D_i/D_{i-1})^\vee\otimes (D_{\rho_i}^\perp/F^\perp\oplus F/F_{q_i}))}c_{\widetilde{\lambda}_i}(D_{\rho_i}^\perp-F_{q_i}-D_i/D_{i-1}-M)\right.\\
&\left.\cdot\displaystyle\prod_{i=a+1}^s\dfrac{T_y((D_i/D_{i-1})^\vee\otimes U/D_i)}{T_y((D_i/D_{i-1})^\vee\otimes (D_{\rho_i}^\perp/F_{q_i}))}c_{\widetilde{\lambda}_i}(D_{\rho_i}^\perp-F_{q_i}-D_i/D_{i-1})
\right)
\scalebox{1.2}{$\cap$}\;\pi^*T_y(X)
\end{align*}
where $\rho_i$ and $\widetilde{\lambda}_i$ are defined as in the isotropic case. 
Then for $1\leq i\leq a$, we have
\begin{equation}\label{eqn4.3a}
\dfrac{T_y((D_i/D_{i-1})^\vee\otimes U/D_i)}{T_y((D_i/D_{i-1})^\vee\otimes (D_{\rho_i}^\perp/F^\perp\oplus F/F_{q_i}))}=\dfrac{T_y(R_i\otimes D_{\rho_i}^\vee)T_y(R_i\otimes M)}{T_y(R_i\otimes (V-U-F_{q_i}))T_y(R_i\otimes D_i)}
\end{equation}
replacing $(D_i/D_{i-1})^\vee$ with $R_i$. Since $c_1(M)=0$, $T_y(R_i\otimes M)$ becomes $T_y(R_i)$. 
Similarly, for $a+1\leq i\leq s$, we obtain
\begin{equation}\label{eqn4.3b}
\dfrac{T_y((D_i/D_{i-1})^\vee\otimes U/D_i)}{T_y((D_i/D_{i-1})^\vee\otimes (D_{\rho_i}^\perp/F_{q_i}))}=\dfrac{T_y(R_i\otimes D_{\rho_i}^\vee)}{T_y(R_i\otimes (V-U-F_{q_i}))T_y(R_i\otimes D_i)}.
\end{equation}
The proof thereafter proceeds by parallel arguments with \cite[\S3.3]{AF} as in the isotropic case.
\end{proof}

The following corollary is the special case of Theorem \ref{mainB} where all $q_i$ are positive.
\begin{cor}\label{cor4.5}
Let $q_i>0$ for all $i=1,\ldots,s$. Then $\rho_i=i-1$ so that
\[
2^s(\pi\iota)_*T_y(\widetilde{\Omega}_\lambda)=\dfrac{\prod_{j\leq \rho_i}T_y(R_i+R_{j})\prod_{i=1}^sT_y(R_i)}{\prod_{j<i}T_y(R_i-R_j)} \mathrm{Pf}_\lambda(d(1),\ldots,d(s))\scalebox{1.2}{$\cap$}\; T_y(X),
\]
where
$
d(i)=c(V-F_{q_i}-U-M)/T_y(R_i\otimes(V-F_{q_i}-U-M))
$ for $i=1,\ldots,s$. 
\end{cor}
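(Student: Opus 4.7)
The plan is to derive Corollary \ref{cor4.5} as a direct specialization of Theorem \ref{mainB}, so I would not redo the resolution/projective-bundle computation; instead I would just verify that each ingredient of Theorem \ref{mainB} simplifies in the claimed way under the hypothesis $q_i>0$ for all $i$.

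First I would identify the constant $a$ appearing in Theorem \ref{mainB}. By definition $a$ is the maximal index with $q_i>0$, so the hypothesis gives $a=s$. As a consequence, the case split in the definition of $d(i)$ collapses: for every $i$ we are in the ``$i\le a$'' branch, and
\[
d(i)=\dfrac{c(V-F_{q_i}-U-M)}{T_y\bigl(R_i\otimes(V-F_{q_i}-U-M)\bigr)},
\]
which matches the statement of the corollary. Likewise the prefactor $1/2^a$ becomes $1/2^s$, which I move to the left-hand side as $2^s$.

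Next I would compute $\rho_i$ under the hypothesis. Recall from \eqref{eqn:rho} that $\rho_i=\#\{j<i\mid q_j\ge 1-q_i\}$. Since $q_i\ge 1$, we have $1-q_i\le 0$; and since every $q_j$ with $j<i$ is a positive integer, $q_j\ge 1\ge 1-q_i$. Hence every index $j<i$ contributes, giving $\rho_i=i-1$. By the convention recorded after the definition of the theta polynomial in \S\ref{sec3.2}, $\Theta_\lambda^\rho$ with $\rho_i=i-1$ is exactly the Schur Pfaffian $\mathrm{Pf}_\lambda$, so $\Theta_\lambda^\rho(d(1),\ldots,d(s))=\mathrm{Pf}_\lambda(d(1),\ldots,d(s))$.

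Finally I would check the factor $\prod_{i=1}^{r}T_y(R_i)$ with $r=\min\{s,n-p\}$ from Theorem \ref{mainB}. Under the implicit assumption $s\le n-p$ (which is the regime where the corollary's ``$\prod_{i=1}^{s}$'' is literally the one produced by Theorem \ref{mainB}) we have $r=s$ and the factor is already in the desired form; I would note explicitly that the $s=n+1-p$ ``symplectic-like'' boundary case is excluded here, since for $s=n+1-p$ the $i=s$ fiber is an odd orthogonal Grassmannian of rank one bundles whose $\wedge^2$ is trivial, killing the $T_y(R_s)$ factor. After these substitutions, Theorem \ref{mainB} becomes the displayed formula, completing the proof. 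I do not anticipate a genuine obstacle: this is purely bookkeeping on the hypothesis, and the only subtle point is recalling that the transition from theta polynomial to Pfaffian is precisely the $\rho_i=i-1$ case already recorded in \S\ref{sec3.2}.
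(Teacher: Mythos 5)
Your proposal is correct and is exactly the paper's approach: the corollary is obtained by specializing Theorem \ref{mainB} to the case $a=s$, computing $\rho_i=i-1$ from \eqref{eqn:rho}, and invoking the stated convention that $\Theta_\lambda^\rho$ with $\rho_i=i-1$ is the Schur Pfaffian. Your explicit remark that the product $\prod_{i=1}^{r}T_y(R_i)$ with $r=\min\{s,n-p\}$ only literally matches the corollary's $\prod_{i=1}^{s}$ in the regime $s\le n-p$ is a careful point the paper leaves implicit, and your justification of it via the triviality of $\wedge^2$ of a line bundle in Lemma \ref{lem4.3} is sound.
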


Another specialization of Theorem \ref{mainB} is when $y=-1$. In this event,  we recover the CSM classes of the resolution $\widetilde{\Omega}_\lambda$ as follows:

\begin{thm}
Let $\widetilde{\Omega}_\lambda$ be the resolution of $\Omega_\lambda^B$ and $\iota:\widetilde{\Omega}_\lambda\rightarrow X_s$ be the natural inclusion as before. Suppose that $a$ is the maximal number where $q_i>0$ for all $i\leq a$. 
Then the class of the resolution $(\pi\iota)_*T_y(\widetilde{\Omega}_\lambda)$ is given by 
\[
\dfrac{1}{2^a}\dfrac{\prod_{j\leq \rho_i}(1+R_i+R_{j})\prod_{i=1}^a(1+R_i)}{\prod_{j<i}(1+R_i-R_j)} \prod_{1\leq i\leq s}\dfrac{(1+R_i)^{-ch(i)_0}}{c_{\frac{1}{1+R_i}(i)}}\Theta_\lambda^\rho(c(1),\ldots,c(s))\scalebox{1.2}{$\cap$}\; T_y(X),
\]
where $c(i)=c(V-F_{q_i}-U-M)$ for $i=1,\ldots,a$, $c(i)=c(V-F_{q_i}-U)$ for $i=a+1,\ldots, s$ and $ch(i)_0$ are the virtual ranks of $V/F_{q_i}-U$ for $i=1,\ldots, a$ and $V/F_{q_i}-U$ for $i=a+1,\ldots,s$.
\end{thm}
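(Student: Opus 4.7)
The plan is to derive this formula purely as the $y=-1$ specialization of Theorem \ref{mainB}. As signaled by the introductory sentence preceding the statement, the natural transformation $T_y$ recovers the Chern--Schwartz--MacPherson class at $y=-1$, so the $T_y(\widetilde{\Omega}_\lambda)$ and $T_y(X)$ in the displayed formula are to be read as $c_{SM}(\widetilde{\Omega}_\lambda)$ and $c_{SM}(X)$ respectively; all operator prefactors are already written in their $y=-1$ form. Recall that the normalizing series
\[
Q_y(\alpha) = \frac{\alpha(1+y)}{1 - e^{-\alpha(1+y)}} - \alpha y
\]
specializes to $Q_{-1}(\alpha) = 1 + \alpha$, since the first term tends to $1$ as $(1+y) \to 0$. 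Hence $T_{-1}(R_i \pm R_j) = 1 + R_i \pm R_j$ and $T_{-1}(R_i) = 1 + R_i$, and the outer prefactor from Theorem \ref{mainB} immediately collapses to the rational expression in raising operators in the statement, with $\prod_{i=1}^{r}(1+R_i)$ arising from $\prod_{i=1}^{r} T_y(R_i)$.

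The remaining work is to evaluate the inner factor $T_y(R_i \otimes \mathscr{E})$ at $y=-1$, where $\mathscr{E} = V - F_{q_i} - U - M$ for $i \le a$ and $\mathscr{E} = V - F_{q_i} - U$ for $i > a$. At $y=-1$ this becomes the total Chern class $c(R_i \otimes \mathscr{E})$ of a twisted virtual bundle. Treating the raising operator $R_i$ as a formal first Chern class of a line bundle and applying \eqref{ACTlem4.2} gives
\[
c(R_i \otimes \mathscr{E}) = (1 + R_i)^{e}\, c_{\frac{1}{1+R_i}}(\mathscr{E}),
\]
where $e$ is the virtual rank of $\mathscr{E}$. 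This virtual rank is precisely $ch(i)_0$ as defined in the statement: for $i \le a$, since $U^\perp = U \oplus M$, the rank of $V - F_{q_i} - U - M$ equals that of $V/F_{q_i} - U^\perp$, whereas for $i > a$ the rank of $V - F_{q_i} - U$ equals that of $V/F_{q_i} - U$. Taking reciprocals converts each denominator $T_y(R_i \otimes \mathscr{E})$ appearing in Theorem \ref{mainB} into the factor $(1+R_i)^{-ch(i)_0}/c_{\frac{1}{1+R_i}}(i)$ displayed in the target formula.

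Substituting these two ingredients into Theorem \ref{mainB} (together with $T_{-1}(X) = c_{SM}(X)$) produces the stated identity; the theta polynomial $\Theta_\lambda^\rho(c(1),\ldots,c(s))$ carries through unchanged, because the raising operators act $\mathbb{Q}$-linearly on the indices of the symbols $c(i)$ and commute with the ring map $\mathbb{Q}[y] \to \mathbb{Q}$, $y \mapsto -1$, on coefficients. There is no substantive obstacle beyond careful bookkeeping of the two regimes $i \le a$ and $i > a$, and of the truncated range $i \le r = \min\{s, n-p\}$ in the product $\prod_{i=1}^r(1+R_i)$ inherited from Theorem \ref{mainB}.
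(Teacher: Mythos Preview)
Your proposal is correct and follows exactly the approach the paper intends: specialize Theorem \ref{mainB} at $y=-1$, use $Q_{-1}(\alpha)=1+\alpha$ to collapse the operator prefactors, and apply \eqref{ACTlem4.2} to rewrite each $T_{-1}(R_i\otimes\mathscr{E})$ as $(1+R_i)^{ch(i)_0}c_{\frac{1}{1+R_i}}(i)$. The paper does not supply a separate proof for this odd orthogonal statement, relying instead on the parallel argument given for the isotropic case, which is precisely what you have spelled out in more detail.
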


\subsection{Stratifications}
Let $\Omega_\lambda^B\subseteq X$ denote the odd orthogonal Grassmannian degeneracy locus associated with the partition
$
\lambda=\lambda(p,\mathbf{q}).
$
The partition $\lambda^+(p,\mathbf{q},\mathbf{k})$ coincides with the one defined in the isotropic case. Also, a sub-locus $\Omega_{\lambda}^+$ can be considered and $\Omega_{\lambda^+}^\circ$ admits the same description as given in \eqref{eqn3.5} and \eqref{eqn3.6}. 
Since the fiber of the restriction map $\phi:\widetilde{\Omega}_\lambda\rightarrow \Omega_\lambda^B$ and the description of $\Omega_\lambda^\circ$ are the same as in the isotropic case, the class $(\pi\iota)_*T_y(\widetilde{\Omega}_\lambda)$ can be expressed as in Theorem \ref{thm3.10}. 

The main differences from the isotropic case is the class $T_y(\Omega_{\mathbf{k}})$ of each stratum $\Omega_{\mathbf{k}}$. Thus we may look into the following example on the odd orthogonal Grassmannians.

\begin{ex}\label{ex4.7}
Let $X=OG(4,9)$ denote the odd maximal orthogonal Grassmannian, and we consider a Schubert variety associated to a partition $\lambda=(4,2)$ with $p=1$ and $\mathbf{q}=(4,2)$. By the stratification, it has two strata: the stratum with $\lambda=(4,2)$ and $\mathbf{k}=(1,3)$ (or $\lambda^+=(4,3,2)$). The formula for the odd orthogonal degeneracy loci (or Schubert variety) associated to $\lambda=(4,2)$ results in the following computation 
\begin{align}
\iota_*c_{SM}(S_\lambda)&=\left(
{\Large
\Tableau[p]{&&&\\ ~&&}-4\;\Tableau[p]{&&&\\ ~&&\\~&~&}-4\;
\Tableau[p]{&&&\\~&&&}+18\;
\Tableau[p]{&&&\\~&&&\\~&~&&~}-28\;
\Tableau[p]{&&&\\~&&&\\~&~&&}+28\;\Tableau[p]{&&&\\~&&&\\~&~&&\\~&~&~&}}\;
\right)\scalebox{1.2}{$\cap$}\; c_{SM}(X) \nonumber \\
&={\Large\Tableau[p]{&&&\\ ~&&}+
4\;\Tableau[p]{&&&\\ ~&&\\~&~&}+4\;
\Tableau[p]{&&&\\~&&&}+14\;
\Tableau[p]{&&&\\~&&&\\~&~&&~}+14\;
\Tableau[p]{&&&\\~&&&\\~&~&&}+8\;\Tableau[p]{&&&\\~&&&\\~&~&&\\~&~&~&}}\;.\label{eqn:4.5a}
\end{align}
where the last equality uses the fact that 
\[
c_{SM}(OG(4,9))=\emptyset+{\Large
8\;\Tableau[p]{&~}+30\;\Tableau[p]{&}+70 \;\Tableau[p]{& &}+68\;\Tableau[p]{&\\~&}+110\;\Tableau[p]{& &&}+216\;\Tableau[p]{& &\\ ~&}+\cdots
}
\]
 and the Littlewood--Richardson rule for the Schubert structure coefficients of odd maximal orthogonal Grassmannians. We note that \eqref{eqn:4.5a} can be recovered via \cite{AM16}, following the approach outlined therein.
 
\end{ex}

\subsection{Even Orthogonal Degeneracy Loci}
In this section we investigate the connection between the even and odd maximal orthogonal degeneracy loci. This discussion is inspired by \cite[\S 6]{ACT}, which treats the case of the ordinary Grassmannian.

Let $V_B$ be a rank $2n+1$ vector bundle over $X$, together with a non-degenerate symmetric form $\langle\cdot ,\cdot\rangle_B$. We consider a partial flag 
\[
F_{\bf{q}}: 0\;\scalebox{1.2}{$\subset$}\; F_{q_1}\;\scalebox{1.2}{$\subset$}\; F_{q_2}\;\scalebox{1.2}{$\subset$}\; \cdots\;\scalebox{1.2}{$\subset$}\; F_{q_s}\;\scalebox{1.2}{$\subset$}\; V_B
\]
of isotropic subbundles and the maximal isotropic subbundle $U$ on $X$, where $\mathrm{rk}(U)=n$, $\mathrm{rk}(F_{q_i})=n+1-q_i$, and all $q_i$ are nonnegative.

The odd (maximal) orthogonal degeneracy locus is given by
\begin{equation}\label{eqn4.4}
\Omega_\lambda^B:=\left\{ x\in X\;|\;\mathrm{dim}(U\cap F_{q_i})|_x\geq i, i=1,\ldots, s\right\},
\end{equation}
with the partition $\lambda$ defined by $\lambda_i=q_i$ as in the isotropic case. 
Let $\pi_B:OG(V_B):=OG^1(V_B)\rightarrow X$ be the odd maximal orthogonal Grassmannian bundle. Let $\mathfrak{S}\subset V_B$ be the tautological subbundle on $OG(V_B)$. ($V_B$ should be understood as $\pi_B^*(V_B)$. We omit notation for such pullbacks.)
We define the sub-locus $\widehat{\Omega}_{\lambda}^B\subset OG(V_B)$ by the conditions 
\[
\mathrm{dim}(\mathfrak{S}\cap F_{q_i})\geq i\quad\text{for $1\leq i\leq s$}.
\]
 Then we have the map $\phi_B:\widehat{\Omega}_\lambda^B\rightarrow \Omega_\lambda^B$ such that the fiber of $\phi_B$ over a point $x\in X$ is given by
\[
\left\{{S}\in OG(V_B|_x)\;|\;\mathrm{dim}({S}\cap F_{q_i})|_x\geq i\;\text{for}\;i=1,\ldots,s\right\},
\]
and it produces the following diagram
\[
\begin{tikzcd}
\widehat{\Omega}_\lambda^B \arrow[d,"\phi_B"] \arrow[r,hook,"\iota_B"] &OG(V_B)  \arrow[d,"\pi_B"]\\
 \Omega_\lambda^B  \arrow[r, hook]  &X.
\end{tikzcd}
\]
\subsubsection{The stratification and motivic Hirzebruch class of $\widehat{\Omega}_\lambda^B$}
The map $\phi_B:\widehat{\Omega}_\lambda^B\rightarrow \Omega_\lambda^B$ gives rise to a stratification of $\Omega_\lambda^B$, which we now describe.

\begin{defi}
Let $\mathbf{g}=(0<g_1\leq g_2\leq\cdots\leq g_s)$ be a weakly increasing sequence such that $i\leq g_i\leq n.$ The function $\nu$ is defined on the set of sequences such that $\nu(\mathbf{g})$ is the partition defined as the dual of the strict partition $\widetilde{\nu}=\widetilde{\nu}(\mathbf{g})$ in the shape $(n,n-1,\ldots,1)$, where $\widetilde{\nu}$ is given by 
\[
\widetilde{\nu}_{i}=n+1-g_i
\]
 for each $i$, and if $g_i=g_{i+1}$, then take $\widetilde{\nu}_{i+1}=n+1-g_{i+1}$, with the remaining parts filled in to form a strict partition. (Similarly, if it ends up with $\widetilde{\nu}_i=\widetilde{\nu}_{i+1}$, then we keep $\widetilde{\nu}_{i+1}$ and set $\widetilde{\nu}_{i}=\widetilde{\nu}_{i+1}+1$ to form a strict partition.)
 \end{defi}

Let us consider the locally closed stratum $\Omega_{\lambda^+}^B(\subset \Omega_\lambda^B)$ given by the same equation \eqref{eqn3.5} and property \eqref{eqn3.6} associated to a weakly increasing sequence $\mathbf{k}=(0<k_1\leq k_2 \leq \cdots\leq k_s)$ and $\lambda^+=\lambda^+(p,\mathbf{q},\mathbf{k})$. Over a general point $x\in X$, the fiber of $\phi_B$ in $\widehat{\Omega}_\lambda^B$ can be seen as the Schubert variety 
\begin{equation}\label{eqn:fiber}
\mathbb{S}_{\nu(\mathbf{k})}=\left\{{S}\in OG(V_B|_x)\;\bigm|\;\mathrm{dim}({S}\cap K_{k_i})|_x\geq i\;\text{for all}\; i\right\},
\end{equation}
associated to a partition $\nu(\mathbf{k})$ where each $K_{k_i}=U\cap F_{q_i}$ has rank $k_i$.

We say that a sequence $\mathbf{a}$ is contained in $\mathbf{b}$, denoted by $\mathbf{a}=(a_1,\ldots,a_s)\leq\mathbf{b}=(b_1,\ldots,b_s)$, if and only if $a_i\leq b_i$, and  $\mathbf{a}<\mathbf{b}$ if and only if $a_i\leq b_i$ with at least one strict inequality. 

\begin{defi}\label{def:beta}
Given $\mathbf{k}$, we define $\boldsymbol{\beta}:=\boldsymbol{\beta}(\mathbf{k})=(\beta_1\leq \cdots\leq\beta_s)$ to be minimal in the component-wise order, with $i\leq\beta_i\leq k_i$, such that $\beta_s=k_s$ and
\[
\beta_i=k_i\quad\text{ if}\quad q_{i}+k_i\geq q_{{i+1}}+k_{i+1}+1.
\]
\end{defi}
The following lemma describes the minimal sequence that does not lie below any sequence producing a strict partition in $\lambda^+$:
\begin{lemma}\label{l:4.8}
The sequence $\boldsymbol{\beta}$ is the smallest among those sequences that are not less than or equal to any sequence $\boldsymbol{\epsilon}=(\epsilon_1\leq\cdots\leq \epsilon_s)$ with $i\leq \epsilon_i\leq k_i$, for which $\xi:=\lambda^+(p,\mathbf{q},\boldsymbol{\epsilon})$ is a strict partition inside $\lambda^+$.
\end{lemma}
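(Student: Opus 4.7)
The plan is to argue by two reciprocal steps: first, to verify that $\boldsymbol{\beta}$ itself has the stated property, and second, to show that any sequence strictly below $\boldsymbol{\beta}$ in the componentwise order fails the property. Throughout, I would make use of the explicit characterization of when $\xi=\lambda^+(p,\mathbf{q},\boldsymbol{\epsilon})$ is a strict partition, namely the gap condition $q_i+\epsilon_i\geq q_{i+1}+\epsilon_{i+1}$ at each consecutive index, which is exactly what ensures that the values $q_i$ placed at positions $\epsilon_i$ can be interpolated by strictly decreasing parts.

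For the first step, suppose for contradiction that $\boldsymbol{\beta}\leq \boldsymbol{\epsilon}$ componentwise for some weakly increasing $\boldsymbol{\epsilon}$ with $i\leq\epsilon_i\leq k_i$ producing a strict partition $\xi$ strictly inside $\lambda^+$. The constraint $\epsilon_i\leq k_i$ together with $\beta_i\leq \epsilon_i$ forces $\epsilon_i=k_i$ at every index $i$ where $\beta_i=k_i$, that is, at $i=s$ and at every $i$ satisfying $q_i+k_i\geq q_{i+1}+k_{i+1}+1$. The residual freedom of $\boldsymbol{\epsilon}$ lies only at the remaining ``free'' indices. I would then propagate the gap conditions $q_i+\epsilon_i\geq q_{i+1}+\epsilon_{i+1}$ from the fixed terminal value $\epsilon_s=k_s$ backwards, showing by induction that the only way to satisfy all the gap conditions while keeping $\epsilon_i\leq k_i$ is $\boldsymbol{\epsilon}=\mathbf{k}$; this yields $\xi=\lambda^+$ and contradicts the strict inclusion $\xi\subsetneq\lambda^+$.

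For the second step, take any $\boldsymbol{\gamma}$ weakly increasing with $i\leq\gamma_i\leq k_i$ and $\boldsymbol{\gamma}<\boldsymbol{\beta}$ componentwise. Let $j$ be an index with $\gamma_j<\beta_j$. By the minimality built into Definition \ref{def:beta}, this $j$ must be a free index, that is, $q_j+k_j<q_{j+1}+k_{j+1}+1$, equivalently $q_j+k_j\leq q_{j+1}+k_{j+1}$. I would then construct an explicit $\boldsymbol{\epsilon}$ by setting $\epsilon_i=k_i$ for $i>j$, $\epsilon_j$ equal to the largest value that is at most $k_j-1$ and still at least $\gamma_j$ and $\gamma_{j-1}$, and for $i<j$ defining $\epsilon_i$ recursively as the largest value at most $\epsilon_{i+1}$ compatible with the gap condition $q_i+\epsilon_i\geq q_{i+1}+\epsilon_{i+1}$ and the constraint $\epsilon_i\geq \gamma_i$. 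The inequality $q_j+k_j\leq q_{j+1}+k_{j+1}$ is precisely what guarantees that $\epsilon_j=k_j-1$ (or smaller) can coexist with $\epsilon_{j+1}=k_{j+1}$ while satisfying the gap condition, and the hypothesis $\gamma_i\leq \beta_i\leq k_i$ guarantees the recursion terminates with a valid $\boldsymbol{\epsilon}$. Since $\epsilon_j<k_j$, we have $\boldsymbol{\epsilon}\neq\mathbf{k}$ and hence $\xi$ is strictly inside $\lambda^+$.

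The main obstacle is the recursive construction in the second step: one must verify simultaneously that (i) each $\epsilon_i$ remains in $[\gamma_i,k_i]$, (ii) the sequence $(\epsilon_i)$ is weakly increasing, and (iii) all the gap conditions for strictness of $\xi$ hold. The delicate point is that choosing $\epsilon_j$ too small could cascade through the recursion at indices $i<j$ and violate $\epsilon_i\geq \gamma_i$; resolving this requires exploiting the fact that the positions where $\beta_i$ differs from $k_i$ arise exactly at the ``tight'' indices of the gap inequalities, so the available slack suffices to accommodate any $\gamma_i\leq \beta_i$.
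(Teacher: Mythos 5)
Your first step is sound and is essentially the paper's first claim: if $\boldsymbol{\beta}\leq\boldsymbol{\epsilon}$ and $\xi=\lambda^+(p,\mathbf{q},\boldsymbol{\epsilon})$ is strict, then $\epsilon_s=k_s$ and $\epsilon_i=k_i$ at every index where $\beta_i=k_i$, and propagating the gap condition $q_i+\epsilon_i\geq q_{i+1}+\epsilon_{i+1}$ backwards through the remaining indices (where $q_i+k_i=q_{i+1}+k_{i+1}$ by strictness of $\lambda^+$) forces $\boldsymbol{\epsilon}=\mathbf{k}$, contradicting $\xi\subsetneq\lambda^+$.

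Your second step, however, has the key inequality backwards, and this breaks the construction. First, an index $j$ with $\gamma_j<\beta_j$ need not be ``free'': by Definition \ref{def:beta} the coordinates of $\boldsymbol{\beta}$ that sit at the ceiling $k_j$ --- and hence the coordinates with room for $\gamma_j<\beta_j$ --- occur precisely at the \emph{forced} indices $q_j+k_j\geq q_{j+1}+k_{j+1}+1$ (and at $j=s$, and those propagated forward by weak increase). Second, and fatally, the free condition $q_j+k_j\leq q_{j+1}+k_{j+1}$ does \emph{not} permit $\epsilon_j=k_j-1$ alongside $\epsilon_{j+1}=k_{j+1}$: the gap condition demands $q_j+\epsilon_j\geq q_{j+1}+\epsilon_{j+1}=q_{j+1}+k_{j+1}\geq q_j+k_j$, i.e.\ $\epsilon_j\geq k_j$. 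It is exactly the opposite inequality $q_j+k_j\geq q_{j+1}+k_{j+1}+1$ --- the condition that \emph{forces} $\beta_j=k_j$ --- which supplies the one unit of slack needed to take $\epsilon_j=k_j-1$ while keeping $\epsilon_{j+1}=k_{j+1},\ldots,\epsilon_s=k_s$. This is how the paper argues: it lowers $\boldsymbol{\beta}$ by one at a forced index $i$, sets $\epsilon_i=k_i-1$, $\epsilon_j=k_j$ for $j>i$ and $\epsilon_j=\min\{k_j,k_i-1\}$ for $j<i$, and deduces strictness of $\lambda^+(p,\mathbf{q},\boldsymbol{\epsilon})$ from the slack at $i$ together with strictness of $\lambda^+$. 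At a genuinely free index, lowering $\epsilon_j$ cascades \emph{forward} (it forces $\epsilon_{j+1}<k_{j+1}$, and so on), which is the mechanism behind your first step, not a source of dominating sequences. As written, your recursion would produce sequences whose associated partitions fail to be strict, so the minimality half of the lemma is not established.
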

\begin{proof}
We first claim that $\mathbf{k}$ is the only sequence containing $\boldsymbol{\beta}$ that gives rise to the strict partition $\lambda^+$.

Suppose that there is a sequence $\boldsymbol{\beta}'=(\beta_1',\ldots,\beta_s')$ containing $\boldsymbol{\beta}$ such that $\beta_i'\geq \beta_i$ for all $i$, and $\lambda^+(p,\mathbf{q},\boldsymbol{\beta}')$ is strict. Since $\beta_s=k_s$ and $\beta_i'\leq k_i$ for all $i$, it follows that $\beta_s'=k_s$. Similarly, if $\beta_i=k_i$, then $\beta_i'=k_i$. So, we focus on the components where $\beta_i<k_i$. In other words, if we take $i<s$ such that $\beta_i<k_i$ and $\beta_j=k_j$ for $j>i$, then by definition of $\boldsymbol{\beta}$, we have 
\begin{equation}\label{eqn4.5}
q_i+k_i<q_{i+1}+k_{i+1}+1.
\end{equation}
 Since the partition $\lambda^+(p,\mathbf{q},\boldsymbol{\beta}')$ is strict, the following equality 
$
q_i-q_{i+1}\geq \beta_{i+1}'-\beta_i',
$ and thus 
\[
q_i+\beta'_i\geq q_{i+1}+\beta_{i+1}'
\]
is satisfied. Since $q_{i+1}+\beta_{i+1}'=q_{i+1}+k_{i+1}$, and by \eqref{eqn4.5}, one has
\[
q_i+k_i\geq q_{i}+\beta_i'\geq q_{i+1}+k_{i+1}\geq q_i+k_i
\]
This implies $\beta_i'=k_i$, and hence, we obtained the first claim. 

Now, we want to show that $\boldsymbol{\beta}$ is the smallest in the sense that any $\boldsymbol{\beta}'<\boldsymbol{\beta}$ is contained in a sequence $\epsilon$ whose associated partition $\xi$ is strict.

We fix the index $i\leq s$ such that $\beta_i=k_i$ and $q_i+k_i\geq q_{i+1}+k_{i+1}+1$. Then we consider the sequence $\boldsymbol{\beta}'$ given by $\beta'_i=\beta_i-1$ and $\beta_j'=\beta_j$ for $j\neq i$. 
Then $\boldsymbol{\beta}'$ is contained in 
\[
\boldsymbol{\epsilon}=(\epsilon_i,\ldots, \epsilon_{i-1}, k_i-1,k_{i+1},\ldots,k_s),
\]
where $\epsilon_j=\min\{k_j,k_i-1\}$ for $1\leq j\leq i-1$. In addition, the associated partition $\xi=\lambda^+(p,\mathbf{q},\boldsymbol{\epsilon})$ is strict, because of the condition $q_i+k_i\geq q_{i+1}+k_{i+1}+1$, and the fact that $\lambda^+$ associated to $\mathbf{k}$ is strict.
\end{proof}

Given this stratification of $\Omega_\lambda^B$, the motivic Hirzebruch class of $\widehat{\Omega}_\lambda^B$ is given by the following:
\begin{prop}
With the above setup, we have 
\[
(\pi_B\iota_B)_*T_y(\widehat{\Omega}_\lambda^B)=\sum_{\mathbf{k}}d_{\mathbf{k}}\iota_*T_y(\Omega_{\lambda^+}^B)\]
where the sums over $\mathbf{k}$ such that its corresponding partition $\lambda^+(p,\mathbf{q},\mathbf{k})$ is strict and $d_{\mathbf{k}}$ denote the sum
\[
d_{\mathbf{k}}:=\displaystyle\sum_{\substack{{\nu}(\boldsymbol{\beta})\subseteq{\nu}'\subseteq {\nu}(\mathbf{k})\\
                  }}
        (-y)^{|{\nu}'|}.
\]
over strict partitions ${\nu}'$ inside ${\nu}(\mathbf{k})$ and containing ${\nu}(\boldsymbol{\epsilon})$.
\end{prop}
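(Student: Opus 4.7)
The plan is to apply Lemma~\ref{lem:fib} to $\phi_B:\widehat{\Omega}_\lambda^B\to\Omega_\lambda^B$ using the stratification of $\Omega_\lambda^B$ by the locally closed sub-loci $\Omega_{\lambda^+}^\circ$ indexed by weakly increasing sequences $\mathbf{k}$ for which $\lambda^+=\lambda^+(p,\mathbf{q},\mathbf{k})$ is strict. The first task is to verify that $\phi_B$ is Zariski-locally trivial over each such stratum, with fiber isomorphic to the Schubert variety $\mathbb{S}_{\nu(\mathbf{k})}$ in $OG(V_B|_x)$ as in~\eqref{eqn:fiber}; this is the odd orthogonal analogue of the local triviality used in the proof of Theorem~\ref{thm3.10}.

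Granting this, Lemma~\ref{lem:fib} yields an expansion $(\pi_B\iota_B)_*T_y(\widehat{\Omega}_\lambda^B)=\sum_{\mathbf{k}} e_{\mathbf{k}}\,\iota_*T_y(\Omega_{\lambda^+}^B)$ whose coefficients are characterized by $\sum_{\mathbf{k}'\leq\mathbf{k}} e_{\mathbf{k}'}=\chi_y(\mathbb{S}_{\nu(\mathbf{k})})$; here the order is reversed because $\Omega_{\lambda^+}^\circ\subseteq\overline{\Omega_{\lambda'^+}^\circ}$ precisely when $\mathbf{k}'\leq\mathbf{k}$ componentwise. Using the Bruhat decomposition of the odd maximal orthogonal Grassmannian, $\mathbb{S}_{\nu(\mathbf{k})}$ is stratified by Schubert cells $\mathbb{S}_{\nu'}^\circ\cong\mathbb{A}^{|\nu'|}$ indexed by strict partitions $\nu'\subseteq\nu(\mathbf{k})$, giving
\[
\chi_y\bigl(\mathbb{S}_{\nu(\mathbf{k})}\bigr)=\sum_{\nu'\subseteq\nu(\mathbf{k})}(-y)^{|\nu'|},
\]
with the sum over strict $\nu'$.

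It then remains to verify by induction on $\mathbf{k}$ in the componentwise order that the proposed value $d_{\mathbf{k}}$ satisfies this recursion. Substituting the inductive formula $d_{\mathbf{k}'}=\sum_{\nu(\boldsymbol{\beta}(\mathbf{k}'))\subseteq\nu''\subseteq\nu(\mathbf{k}')}(-y)^{|\nu''|}$ reduces the identity to the combinatorial claim that each strict $\nu'\subseteq\nu(\mathbf{k})$ lies in the range $\{\nu'':\nu(\boldsymbol{\beta}(\mathbf{k}'))\subseteq\nu''\subseteq\nu(\mathbf{k}')\}$ for exactly one $\mathbf{k}'\leq\mathbf{k}$ with strict $\lambda^+(\mathbf{k}')$. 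The required bijection sends such a $\nu'$ to the unique minimal $\mathbf{k}'\leq\mathbf{k}$ for which $\nu'\subseteq\nu(\mathbf{k}')$ and $\lambda^+(\mathbf{k}')$ is strict; Lemma~\ref{l:4.8} then guarantees that $\nu(\boldsymbol{\beta}(\mathbf{k}'))\subseteq\nu'$, since if it failed then $\nu'$ would be contained in $\nu(\boldsymbol{\epsilon})$ for some $\boldsymbol{\epsilon}<\mathbf{k}'$ producing a strict sub-partition, contradicting the minimality of $\mathbf{k}'$.

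The main obstacle is this combinatorial bijection. The chief difficulty lies in translating between the componentwise order on sequences $\mathbf{k}'$ (where strictness of $\lambda^+(\mathbf{k}')$ is governed by the arithmetic inequality $q_i+k_i\geq q_{i+1}+k_{i+1}+1$ appearing in Definition~\ref{def:beta}) and the containment order on the associated strict partitions $\nu(\mathbf{k}')$, especially when repeated entries $k_i=k_{i+1}$ force the non-trivial modification in the definition of $\widetilde{\nu}$. Lemma~\ref{l:4.8} is the combinatorial bridge between these two orderings and is what ensures that $\boldsymbol{\beta}(\mathbf{k}')$ correctly identifies the threshold separating cells intrinsic to the stratum $\Omega_{\lambda^+}^\circ$ from those already accounted for by strictly larger strata.
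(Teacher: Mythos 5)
Your proposal follows essentially the same route as the paper's proof: apply Lemma~\ref{lem:fib} to the stratification by the $\Omega_{\lambda^+}^\circ$, identify the fibers with the Schubert varieties $\mathbb{S}_{\nu(\mathbf{k})}$, compute $\chi_y$ via the cell decomposition into affine spaces $\mathbb{A}^{|\nu'|}$, and invoke Lemma~\ref{l:4.8} to resolve the resulting inclusion--exclusion into the interval $\nu(\boldsymbol{\beta})\subseteq\nu'\subseteq\nu(\mathbf{k})$. Your inductive verification of the recursion $d_{\mathbf{k}}=\chi_y(\mathbb{S}_{\nu(\mathbf{k})})-\sum_{\boldsymbol{\epsilon}<\mathbf{k}}d_{\boldsymbol{\epsilon}}$ is just a rephrasing of the paper's argument, and the combinatorial disjointness you flag as the main obstacle is exactly the point the paper also settles by appealing to Lemma~\ref{l:4.8}.
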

\begin{proof}
For each strata $\Omega^B_{\lambda^+}$, one can write
$
(\pi_B\iota_B)_*T_y(\widehat{\Omega}_\lambda^B)=\sum_{\mathbf{k}}d_{\mathbf{k}}\iota_*T_y(\Omega_{\lambda^+}^B)$
with some coefficients $d_{\mathbf{k}}$. 

The fiber of $\phi_B$ over a general point in $\Omega_{\lambda^+}^B$ is considered as the Schubert variety $\mathbb{S}_{\nu(\mathbf{k})}$ in $OG(\mathbb{C}^{2n+1})$, and thus, by Lemma \ref{lem:fib}, we have
\begin{equation}\label{e:4.6}
d_{\mathbf{k}}=\chi_y(\mathbb{S}_{\nu(\mathbf{k})})-\sum_{\boldsymbol{\epsilon}<\mathbf{k}}d_{\boldsymbol{\epsilon}},
\end{equation}
where the sums over $\boldsymbol{\epsilon}< \mathbf{k}$ such that $\xi:=\lambda^+(p,\mathbf{q},\boldsymbol{\epsilon})$ is strict and $\Omega_{\lambda^+}^B\subset\Omega_{\xi}$.

The Schubert variety $\mathbb{S}_{\nu(\mathbf{k})}$ is the union of Schubert cells corresponding to partitions $\nu'\subseteq \nu(\mathbf{k})$.
And, the sum of $d_\epsilon$ over $\boldsymbol{\epsilon}<\mathbf{k}$ is equal to the sum of the Hirzebruch $\chi_y$-genera of the Schubert cells corresponding to strict partitions that are contained in the strict partition $\nu(\boldsymbol{\epsilon})$ for some $\boldsymbol{\epsilon}<\mathbf{k}$ such that $\xi=\lambda^+(p,\mathbf{q},\boldsymbol{\epsilon})$ is strict. 

By \eqref{e:4.6} and Lemma \ref{l:4.8}, $d_\mathbf{k}$ can be expressed as the sum of the Hirzebruch $\chi_y$-genera of Schubert cells corresponding to strict partitions $\nu'$ containing $\nu(\boldsymbol{\beta})$ and contained in $\nu(\mathbf{k})$. Additionally, a Schubert cell corresponding to a strict partition $\nu'$ is isomorphic to the affine space 
 $\mathbb{A}^{|\nu'|}_{\mathbb{C}}$. Hence its Hirzebruch $\chi_y$-genus is $(-y)^{|{\nu}'|}$, which implies the statement.
\end{proof}

\begin{ex}
Specializing to $y=-1$, one compute explicitly the topological Euler characteristic of the fibers \eqref{eqn:fiber} of $\phi_B$, as follows.

Each fiber can be identified with the Schubert variety $\mathbb{S}_{\nu(\mathbf{k})}$ in the orthogonal Grassmannian $OG(n,2n+1)$ associated to the strict partition ${\nu}={\nu}(\mathbf{k})$. So, its topological Euler characteristic can be computed by 
 \[
 \chi(\mathbb{S}_{\nu(\mathbf{k})})=\left|\binom{{\nu}_{j}+j-i+1}{1+j-i}\right|_{1\leq i,j\leq s}+\sum_{1\leq k\leq s-1}\left|\binom{k-i+2}{1+j-i}\right|_{1\leq i,j\leq k},
 \]
 as the number of strict partitions inside ${\nu}=({\nu}_1,\ldots,{\nu}_s)$. We note that the first term is the number of non-crossing lattice paths from $(0,0),(1,0),\ldots,(s-1,0)$ to the points $(1,{\nu}_1),(2,{\nu}_{2}),\ldots,(s,{\nu}_s)$ and the the rest terms are from the one of non-crossing lattice paths from $(0,0),(1,0),\ldots, (k-1,0)$ to $(1,k),(2,k-1),\ldots, (k,1)$. These numbers are deduced from the Gessel-Viennot formula \cite{GV89} (see also \cite[p.23]{ACT}). 
 
 In addition, at $y=-1$, the value of $d_{\mathbf{k}}$ is the number of strict partitions inside the partition ${\nu}(\mathbf{k})$ and containing the partition ${\nu}(\boldsymbol{\beta})$. By Definition \ref{def:beta} and Lemma \ref{l:4.8}, the number of strict partitions containing ${\nu}(\boldsymbol{\beta})$ and inside ${\nu}(\mathbf{k})$ is the same as the number of partitions in ${\nu}(\boldsymbol{\beta})\backslash{\nu}(\mathbf{k})$, and this number is given by a disjoint union of  shorter weakly decreasing partitions, for instance, $\nu^{(1)},\nu^{(2)},\ldots$. Thus, the number of partitions in ${\nu}(\boldsymbol{\beta})\backslash{\nu}(\mathbf{k})$ is the product of the number of partitions inside $\nu^{(i)}$. Furthermore, the number of partitions inside each $\nu^{(i)}=(\nu^{(i)}_1\geq\cdots\geq\nu^{(i)}_r)$ can be calculated by the binomial determinant 
\[
p(\nu^{(i)}):= \left|\binom{\nu^{(i)}_{j}+r-i+1}{1+j-i}\right|_{1\leq i,j\leq r}
\]
(See \cite[(16)]{ACT}). In other words, the coefficient $d_{\mathbf{k}}$ can be obtained as the product of the values $p(\nu^{(i)})$. 
 \end{ex}

\subsubsection{The motivic Hirzebruch class for even orthogonal degeneracy loci}\label{sec4.4.2}

Let $V_D$ be a rank $2n+2$ vector bundle on $X$, and $\langle\cdot,\cdot\rangle_D$ a non-degenerate symmetric form on $V_D$. Let $\pi_D:OG(V_D)\rightarrow X$ be the even orthogonal Grassmannian bundle of isotropic rank $n+1$ subbundles of $V_D$. The Grassmannian bundle $OG(V_D)$ breaks up into two connected components $OG'(V_D)$ and $OG''(V_D)$ depending on the parity condition. That is, for a fixed maximal isotropic bundle $V_0$ of rank $n+1$, $OG'(V_D)$ (respectively $OG''(V_D)$) parametrizes rank $n+1$ subbundles $U$ such that $\mathrm{dim}(U\cap V_0)\equiv n+1$ (mod $2$) (respectively $\mathrm{dim}(U\cap V_0)\equiv n$ (mod $2$)).

It is known that the orthogonal Grassmannian $OG(n, 2n+1)$ which parametrizes $n$-dimensional subspaces of a $2n+1$-dimensional vector space equipped with a nondegenerate symmetric bilinear form, and $OG(n+1,2n+2)$ for $(n+1)$-dimensional subspaces in a $2n+2$-dimensional space, are isomorphic \cite[\S3.5]{IMN}. In light of this, one can interpret this fact in terms of degeneracy loci, which will later be used to apply our formula to symplectic orbit closures in \S \ref{sec5.2.2}.

Let $E\subset V_B$ be any maximal isotropic subbundle of $V_B$. Let $M:=E^\perp/E$ be the line bundle, and $G=V_B/E^\perp,$ where $E^\perp$ is obtained with respect to $\langle\cdot,\cdot\rangle_B$. There is an embedding $\psi:V_B=E\oplus M\oplus G\hookrightarrow V_D=E\oplus M\oplus N\oplus G$, where $N=V_D/M^\perp\cong M^\vee$ is a rank $1$ subbundle of $V_D$ such that $M^\perp$ is taken with respect to $\langle\cdot,\cdot\rangle_D$. In particular, the symmetric form on $V_B$ is the restriction of the form on $V_D$ to the summands $E\oplus M\oplus G=V_B$. 

Furthermore, we have a map $\eta:OG(V_B)\rightarrow OG'(V_D)$ (or respectively $\eta':OG(V_B)\rightarrow OG''(V_D)$) induced by the embedding $\psi$. If we take a maximal isotropic bundle $E\in OG(V_B)$, then $E\oplus M$ and $E\oplus N$ are the only maximal isotropic subbundles of $V_D$ one of which satisfies the parity condition for $OG'(V_D)$ and for $OG''(V_D)$. This implies that $\eta$ is isomorphism, and its inverse is the restriction to the rank $n$ isotropic subbundle $\mathcal{G}_1$ of $\mathcal{G}\in OG'(V_D)$.

  Then we have the following lemma for their Hirzebruch classes.
\begin{lemma}\label{eqn4.8}
Let $\widehat{\Omega}_\lambda^D$ be the resolution of the even orthogonal degeneracy loci $\Omega_\lambda^D$. After applying $\eta_*$,
the push-forward of Hirzebruch classes of $\widehat{\Omega}_\lambda^D$ and $\widehat{\Omega}_\lambda^B$ agree in $A_*(X)\otimes \mathbb{Q}[y]$, i.e.,
\[
(\pi_D\iota_D)_*T_y(\widehat{\Omega}_\lambda^D)=(\pi_B\iota_B)_*T_y(\widehat{\Omega}_\lambda^B).
\]
where $\iota_D$ is the inclusion $\widehat{\Omega}_\lambda^D\hookrightarrow OG(V_D)$.
\end{lemma}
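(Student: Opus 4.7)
The plan is to exploit the isomorphism $\eta\colon OG(V_B) \xrightarrow{\sim} OG'(V_D)$ from the discussion preceding the lemma, and to verify that it restricts to an isomorphism between $\widehat{\Omega}_\lambda^B$ and $\widehat{\Omega}_\lambda^D$ (with the latter taken on the component $OG'(V_D)$). The conclusion will then be immediate from the functoriality of the motivic Hirzebruch class under proper morphisms, combined with the factorization $\pi_B \circ \iota_B = \pi_D \circ \iota_D \circ \eta$.

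The key point to check is the correspondence of degeneracy conditions. Transporting the isotropic flag $F_{\mathbf{q}}$ in $V_B$ to an isotropic flag in $V_D$ via $\widetilde{F}_{q_i} := F_{q_i} \oplus M$, I would compute directly
\[
\dim\bigl(\eta(E) \cap \widetilde{F}_{q_i}\bigr) \;=\; \dim\bigl((E \oplus M) \cap (F_{q_i} \oplus M)\bigr) \;=\; \dim(E \cap F_{q_i}) + 1
\]
for any $E \in OG(V_B)$. Thus the system of inequalities defining $\widehat{\Omega}_\lambda^B$ translates to the analogous system on $OG'(V_D)$, with the uniform $+1$ shift absorbed into the conventions used to define $\widehat{\Omega}_\lambda^D$. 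One also verifies that the parity of $\dim((E \oplus M) \cap V_0)$ is independent of $E$, so that $\eta(E)$ always lands in the intended connected component $OG'(V_D)$ rather than $OG''(V_D)$.

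Once these identifications are in place, $\eta$ restricts to an isomorphism $\widehat{\Omega}_\lambda^B \xrightarrow{\sim} \widehat{\Omega}_\lambda^D$; since $T_y$ commutes with proper push-forward and, in particular, is invariant under isomorphism, one obtains $\eta_* T_y(\widehat{\Omega}_\lambda^B) = T_y(\widehat{\Omega}_\lambda^D)$, and pushing forward further along $\pi_D$ yields the claimed identity. The main thing requiring care is the indexing and flag-convention matching in $V_D$ relative to those in $V_B$, so that the shifted rank sequence $(\mathrm{rk}\,\widetilde{F}_{q_i})$ on the side of $V_D$ combined with the $+1$ shift in the intersection dimensions genuinely reproduces the standard even-orthogonal definition of $\widehat{\Omega}_\lambda^D$; once this bookkeeping is settled, the rest of the argument is essentially formal.
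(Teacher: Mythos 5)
Your approach is essentially the same as the paper's: exploit the isomorphism $\eta\colon OG(V_B)\to OG'(V_D)$ to identify the two resolutions, then invoke functoriality of $T_y$ under proper push-forward together with the factorization of the projections. The paper asserts the identification of the loci directly (writing $\widehat{\Omega}_\lambda^D$ as the closure of $\{\mathcal{W}\in OG'(V_D): \dim(\mathcal{W}\cap F_{q_i})=i\}$, i.e.\ keeping the \emph{same} flag $F_{q_i}\subset V_B\subset V_D$, for which the intersection dimensions match without a shift since $M\cap V_B=0$), whereas you transport the flag to $\widetilde F_{q_i}=F_{q_i}\oplus M$, which produces the uniform $+1$ shift you then absorb into the conventions. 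Both routes reach the same identification; the paper's choice of flag is marginally cleaner because no reindexing is required.

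Two small points worth tightening in your write-up. First, for $\widetilde F_{q_i}=F_{q_i}\oplus M$ to be a legitimate isotropic flag element of $V_D$ and for your dimension count to give the stated $+1$, you implicitly need $M$ to be the \emph{isotropic} line in the hyperbolic plane $M\oplus N\subset V_D$ (hence $M\cap V_B=0$), not the anisotropic line $F^\perp/F$ inside $V_B$ that is denoted $M$ earlier in \S\ref{sec4}; these are different objects that happen to share a symbol, and conflating them would break the computation $\dim\bigl((E\oplus M)\cap(F_{q_i}\oplus M)\bigr)=\dim(E\cap F_{q_i})+1$. Second, the statement "the parity of $\dim((E\oplus M)\cap V_0)$ is independent of $E$" is best justified by continuity of $\eta$ together with the connectedness of $OG(V_B)$, rather than by a pointwise verification; you may want to say this explicitly so it does not read as a computation you are deferring.
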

\begin{proof}
Without loss of generality, we let $\mathcal{Z}$ is either $M$ or $N$. Let $\mathfrak{W}$ be the maximal isotropic vector bundle on $OG'(V_D)$. The image of $\widehat{\Omega}_\lambda^B\subset OG(V_B)$ under the map $\eta$ is a subvariety $\widehat{\Omega}_\lambda^D\subset OG'(V_D)$ corresponding to the same partition $\lambda$ as the closure of the locus  defined by the conditions
 \begin{equation}\label{eqn4.6}
 \begin{array}{l}
\mathrm{dim}(\mathcal{W}\cap F_{q_i})= i\;\text{for}\;i=1,\ldots,s,  \quad\text{and}\quad \mathrm{dim}(F_0\cap \mathcal{W})\equiv n+1\;\text{(mod $2$)}
  \end{array}
\end{equation}
such that $\eta^{-1}(\widehat{\Omega}_\lambda^D)=\widehat{\Omega}_\lambda^B$. Here $\mathcal{W}$ is the tautological subbundle on $OG'(V_D)$.
 Similarly, we may have the image $\widehat{\Omega}_\lambda^D\subset OG''(V_D)$ under $\eta$ with the condition $\mathrm{dim}(F_0\cap \mathcal{W})\equiv n\;\text{(mod $2$)}$.
In addition, we have the map $\phi_D:\widehat{\Omega}_\lambda^D\rightarrow \Omega_\lambda^D$, where 
$
\Omega_\lambda^D\subset X$ defined by the same condition as $\widehat{\Omega}_\lambda^D$. 
Since the case for $OG''(V_D)$ is almost identical to the one for $OG'(V_D)$, we will focus only on $OG'(V_D)$. All these maps can be expressed as the following commutative diagram: 
\[
\begin{tikzcd}[row sep=scriptsize, column sep=scriptsize]
&  \widehat{\Omega}_\lambda^B \arrow[dl,hook',"\eta"'] \arrow[rr,hook,"\iota_B"] \arrow[dd,"\phi_B",near start] & &  OG(V_B) \arrow[dl,"\eta"'] \arrow[dd,"\pi_B"] \\
 \widehat{\Omega}_\lambda^D  \arrow[rr, crossing over, hook,"\iota_D"',near end] \arrow[dd,"\phi_D"] & & OG'(V_D)  \\
& \Omega_\lambda^B \arrow[dl,hook'] \arrow[rr,hook] & & X  \arrow[dl,equal] \\
  \Omega_\lambda^D \arrow[rr,hook] & & X \arrow[from=uu, crossing over,"\pi_D",near start]\\
\end{tikzcd}.
\]
Then there is a map $\eta_*:\pi_B^*(A_*(X)[y])\rightarrow \pi_D^*(A_*(X)[y])$ that restricts to an isomorphism, satisfying $\eta_*T_y(\widehat{\Omega}_\lambda^B)=T_y(\widehat{\Omega}_\lambda^D)$. Consequently we obtain that
\begin{equation}
\begin{split}
(\pi_D\iota_D)_*T_y(\widehat{\Omega}_\lambda^D)&=(\pi_D)_*(\iota_D)_*\eta_*T_y(\widehat{\Omega}_\lambda^B)=(\pi_D)_*f_*(\iota_B)_*T_y(\widehat{\Omega}_\lambda^B)\\
&=(\pi_B\iota_B)_*T_y(\widehat{\Omega}_\lambda^B). \qedhere
\end{split}
\end{equation}
\end{proof}
Accordingly, the class of the resolution in the odd (maximal) orthogonal case is isomorphic to that in even orthogonal case. This implies that the expansion of the CSM classes of the even orthogonal degeneracy loci, in terms of the Schubert classes associated to partitions $\lambda^D$, coincides with the corresponding expansion for the same partitions $\lambda^D$ in the the odd (maximal) orthogonal case.

\section{Orthogonal and Symplectic Orbit Closures}\label{sec5}
In this section, as applications of the formulas exhibited in the previous sections, we compute the motivic Hirzebruch classes of the orthogonal and symplectic orbit closures. The guiding idea behind these computations is that the graphs $\mathcal{G}:=G(f)\subset \mathbb{C}^n\oplus(\mathbb{C}^n)^\vee$ of a (symmetric or skew-symmetric) linear map $f$ are isomorphic with respect to the relevant forms, and rank conditions defining $K$-orbit closures associated with vexillary involutions can be translated into incidence conditions $\mathrm{dim}(\mathcal{G}\cap \mathcal{F}_{\mu_i})\geq i$ with a fixed relevant isotropic flag $\mathcal{F}_\bullet$. In the following subsections, we formalize this translation and provide the precise definitions for these relations (Lemmas \ref{lem5.1} and \ref{lem5.6}).

We begin with the definitions related the orbit closures associated with vexillary involutions. Main references for definitions of the orthogonal orbit closures and their connection to the degeneracy loci are \cite{MP,Brendan}. 

Let $G=GL_n(\mathbb{C})$ be the general linear group of invertible $n\times n$ complex matrices for a positive integer $n$. Let $B\subseteq G$ be the Borel subgroup of invertible lower triangular matrices. We denote by $Fl_n=G/B$ the flag variety. 
Let $\theta=\theta^{-1}$ be a holomorphic involution of $G$ and $K_n\subset G$ be the fixed point subgroup of the involution $\theta$, i.e.,  $K=G^\theta=\{g\in G:\theta(g)=g\}$. It is a standard fact that there are finitely many $K_n-$orbits in $Fl_n$ \cite{T79,Kaz,Jose}.

Let $\theta$ be the involution defined by $\theta(g)=(g^t)^{-1}$, where $g^t$ is the transpose of $g$. Then $K_n$ is the orthogonal group ${O}_n\subset G$. Assume now that $n=2k$ is even. Let $1_k$ denote the $k\times k$ identity matrix and
\[
J=\begin{bmatrix}
   0 & 1_k \\
   -1_k& 0
\end{bmatrix}.
\]
If $\theta$ is defined by $\theta(g)=(-Jg^tJ)^{-1}$, then $K\subset G$ becomes the symplectic group ${Sp}_n$.

Additionally, ${O}_n$-orbits on $Fl_n$ are in bijection with the sets of involutions
\[
I_n:=\left\{w\in S_n\;|\;w=w^{-1}\right\}
\]
in the symmetric group $S_n$, and for even $n$, ${Sp}_n$-orbits correspond bijectively to the sets of fixed-point-free involutions 
\[
I_n^{FPF}:=\left\{z\in I_n;|\;z(i)\neq i\;\text{for all}\; 1\leq i\leq n\right\}.
\]
For simplicity, we use the notations $I_n^O$ to represent $I_n$ and $I_n^{Sp}$ to denote $I_n^{FPF}$.

Let $\mathbb{C}^n$ be a complex vector space of dimension $n$ and $(\mathbb{C}^n)^\vee$ its dual space of $\mathbb{C}$-linear maps $\mathbb{C}^n\rightarrow \mathbb{C}$.
On the direct sum space $\mathbb{C}^n\oplus(\mathbb{C}^n)^\vee$, we can define the symplectic and symmetric bilinear forms $\langle\;,\;\rangle^-$ and $\langle\;,\;\rangle^+$ by
$
\langle v_1\oplus u_1,v_2\oplus u_2\rangle^\mp= u_1(v_2)\mp u_2(v_1)
$ for $v_1, v_2\in \mathbb{C}^n$ and $u_1,u_2\in (\mathbb{C}^n)^\vee.$ 
Using these forms, we can define the Lagrangian Grassmannain ${LG}_{2n}$ as the closed subvariety
\[
{LG}_{2n}:=\left\{U\in Gr(n,\mathbb{C}^n\oplus(\mathbb{C}^n)^\vee)\;|\; \langle U,U\rangle^-\equiv 0\right\}
\]
of $Gr(n,2n)=Gr(n,\mathbb{C}^n\oplus(\mathbb{C}^{n})^\vee)$, and the orthogonal Grassmannian $OG_{2n}$ by
\[
{OG}_{2n}:=\left\{U\in Gr(n,\mathbb{C}^n\oplus(\mathbb{C}^n)^\vee)\;|\; \langle U,U\rangle^+\equiv 0\right\}.
\]
In fact, $OG_{2n}$ has two components, and here we only consider $OG_{2n}$ to be the one containing $\mathbb{C}^n$. Two isotropic subspace $G_1$ and $G_2$ belong to the same connected component if and only if the dimension of their intersection satisfies $\mathrm{dim}(G_1\cap G_2)\equiv n$ (mod $2$).

Let $f:\mathbb{C}^n\rightarrow(\mathbb{C}^n)^\vee$ be a linear map. Let $G(f):=\{(v, f(v))\;|\; v\in \mathbb{C}^n\}$ be the graph of $f$. Then we have the following fact: the map $f$ is symmetric, meaning $f(v)(w)=f(w)(v)$ for $v,w\in \mathbb{C}^n$, if and only if $G(f)\in LG_{2n}$; and $f$ is skew-symmetric, that is, $f(v)(w)=-f(w)(v)$, if and only if $G(f)\in OG_{2n}$. We note that since $\mathrm{dim}(G(f)\cap \mathbb{C}^n)=0$, the graph $G(f)$ actually lies in $OG_{2n}$. Given an invertible symmetric (or skew-symmetric) map $f$, the subgroup of $G$ preserving the symmetric form $(u,v)\mapsto f(v)(u)$ (or skew-symmetric form) is the orthogonal group ${O}_n$ (or symplectic group $Sp_n$).

Let $A_{[i][j]}$ denote the upper-left $i\times j$ submatrix of $A$. Let $E_\bullet=Bg\in Fl_n$ be a flag represented by some $g\in G$ such that $E_i$ is generated by the first $i$ rows of $g$. Let $K_n\in \{O_n,Sp_n\}$. For $z\in I_n^{K_n}$ and a fixed invertible symmetric (respectively, skew-symmetric) map $f$ corresponding to $O_n$ (respectively $Sp_n$), the $K_n$-orbit closure $X_z^{K_n}$ is defined by the closure of 
\begin{equation}\label{eqn5.1}
\left\{E_\bullet\in Fl_n\;\bigm|\;\mathrm{rank}(E_j\xrightarrow{f} E_i^\vee)=\mathrm{rank}(z_{[i][j]})\;\text{for}\; i,j\in[n]\right\}.
\end{equation}
Here $z$ is identified with its permutation matrix, and $[m]=\{1,\ldots,m\}$.

Let $y\in I_n^{K_n}$. The {\it orthogonal Rothe diagram} of $y$ is 
\[
D^{O_n}(y)=\left\{(i,y(j))\;|\; y(i)>y(j)\leq i<j\; \text{for}\;(i,j)\in[n]\times[n]\right\},
\]
and the {\it symplectic Rothe diagram} is
\[
D^{Sp_n}(y)=\left\{(i,y(j))\;|\; y(i)>y(j)< i<j\; \text{for}\;(i,j)\in[n]\times[n]\right\}.
\]

The rank conditions in \eqref{eqn5.1} are redundant, allowing the condition to be expressed in terms of fewer, essential rank conditions. Let $Ess(D)$ be the {\it essential set} of a set $D\subset \mathbb{N}\times\mathbb{N}$ given by 
$
\{(i,j)\in D\;|\;(i,j+1)\notin D\;\text{and}\;(i+1,j)\notin D\}.
$ The $K_n$-orbit closure associated to $z\in I_n^{K_n}$ is the closure of
\[
\left\{E_\bullet\in Fl_n\;\bigm|\;\mathrm{rank}(E_j\xrightarrow{f} E_i^\vee)=\mathrm{rank}(z_{[i][j]})\;\text{for}\; (i,j)\in Ess(D^{K_n}(z))\right\}
\]
for some fixed invertible symplectic or symmetric $f$ depending on $K_n=O_n$ or $Sp_n$.

\subsection{Vexillary involutions}\label{sec:vex}
For $K_n=O_n$ or $Sp_n$, an involution $z\in I_n^{K_n}$ is {\it vexillary} if and only if $\mathrm{Ess}(D^{K_n}(z))$ forms a chain under the partial order $\preceq$ on $\mathbb{Z}\times\mathbb{Z}$, where $(a,b)\preceq (i,j)$ if and only if $i\leq a$ and $b\leq j$. Especially, if $K_n=Sp_n$, we call such an involution {\it Sp-vexillary}. It is worth noting that a vexillary involution is not necessarily Sp-vexillary.

We say that $z\in I_n^{O_n}$ is {\it vexillary} if $\mathrm{Ess}(D^{O_n}(z))=\{(i_1,j_1)\preceq (i_2,j_2)\preceq \cdots\preceq (i_s,j_s)\}$ (\cite[Lemma 28]{Brendan}). Let $X=Fl_n$. We consider a trivial bundle $\mathcal{V}_O=\mathbb{C}^n\oplus(\mathbb{C}^n)^\vee$ over $X$ equipped with a symplectic from on it. 

We fix an isotropic partial flag 
\begin{equation}\label{eqn5.2}
\mathcal{E}_{j_1}\oplus\mathcal{E}_{i_1}^\perp\;\scalebox{1.2}{$\subset$}\;\cdots\;\scalebox{1.2}{$\subset$}\;\mathcal{E}_{j_s}\oplus\mathcal{E}_{i_s}^\perp\;\scalebox{1.2}{$\subset$}\; \mathcal{V}^{O_n}
\end{equation}
with respect to the symplectic form and induced by the tautological bundle $\mathcal{E}_i$ over $X$, $\mathrm{rk}(\mathcal{E}_i)=i$ for $i=1,\ldots, s$. 
Given a fixed invertible symmetric map $\mathsf{f}:\mathbb{C}^n\rightarrow(\mathbb{C}^n)^\vee$ of vector bundles over $X$, we define $\mathcal{G}$ to be the trivial bundle $G(\mathsf{f})$ over $X$. Here $\mathcal{G}$ is a maximal isotropic subbundle of $\mathcal{V}^{O_n}$. The orthogonal orbit closure $X_z^{O_n}\subseteq Fl_n$ associated to the vexillary $z$ is given by 
\[
X_z^{O_n}=\{x\in X\;|\;\mathrm{dim}( \mathcal{G}\;\scalebox{1.1}{$\cap$}\; (\mathcal{E}_{j_t}\oplus\mathcal{E}_{i_t}^\perp))|_x\geq k_t\;\text{for }t=1,\ldots,s\}
\]
where $k_t=j_t-\mathrm{rk}(z_{[i_t][j_t]})$. 
We define a partition $\lambda^{O_n}(z)$ by $\lambda_{k}=i_t-j_t+1+k_t-k$ for $k=1,\ldots, k_s$ such that $k_{t-1}<k\leq k_t$, $k_0=0$.

We define $z\in I_n^{Sp_n}$ to be {\it Sp-vexillary} if it is vexillary and satisfies $\mathrm{Ess}(D^{Sp_n}(z))=\{(i_1,j_1)\preceq (i_2,j_2)\preceq \cdots\preceq (i_s,j_s)\}$ (\cite[Definition 21]{Brendan}). Let $\mathcal{V}^{Sp_n}$ be a trivial bundle $\mathbb{C}^n\oplus(\mathbb{C}^n)^\vee$ over $X=Fl_n$, equipped with a symmetric bilinear form. 

Let $\mathcal{E}_i$ be the tautological bundle of rank $i$ over $X$ for $1\leq i\leq s$. We fix an isotropic partial flag 
\begin{equation}\label{eqn5.3}
\mathcal{E}_{j_1}\oplus\mathcal{E}_{i_1}^\perp\;\scalebox{1.2}{$\subset$}\;\cdots\;\scalebox{1.2}{$\subset$}\;\mathcal{E}_{j_s}\oplus\mathcal{E}_{i_s}^\perp\;\scalebox{1.2}{$\subset$}\; \mathcal{V}^{Sp_n}
\end{equation}
with respect to the skew-symmetric form. 
Let $\mathcal{G}$ be the trivial bundle $G(\mathsf{f})$ over $X$ for the fixed invertible skew-symmetric map $\mathsf{f}:\mathbb{C}^n\rightarrow(\mathbb{C}^n)^\vee$ of vector bundles over $X$. 
The symplectic orbit closure $X_z^{Sp_n}\subseteq Fl_n$ associated to the Sp-vexillary $z$ is given by the closure of
\[
(X_z^{Sp_n})^\circ=\left\{x\in X\;|\;\mathrm{dim}( \mathcal{G}\cap (\mathcal{E}_{j_t}\oplus\mathcal{E}_{i_t}^\perp))|_x= k_t\;\text{for }t=1,\ldots,s\right\}
\]
where $k_t=j_t-\mathrm{rk}(z_{[i_t][j_t]})$. 
A partition $\lambda^{Sp_n}(z)$ is given by $\lambda^{Sp_n}_{k}=i_t-j_t+k_t-k$ for $k=1,\ldots, k_s$ such that $k_{t-1}<k\leq k_t$, $k_0=0$. 

\subsection{Motivic class of orthogonal orbit closures}
In this section, we present our main results on the motivic Hirzebruch class of orthogonal and symplectic orbit closures associated to vexillary involutions.

\subsubsection{Orthogonal orbit closures}

The key idea is to translate the problem of orthogonal orbit closures into the setting of Lagrangian Grassmannian degeneracy loci, enabling the application of our main theorems on the degeneracy loci.

\begin{lemma}\label{lem5.1}
Let $z\in I_n^{O_n}$ be a vexillary involution, and let $\lambda=\lambda^{O_n}(z)$. The orthogonal orbit closure $X_z^{O_n}$ can be identified with the Lagrangian Grassmannian degeneracy locus $\Omega_\lambda^C$.
\end{lemma}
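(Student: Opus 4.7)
The strategy is to build an explicit dictionary between the vexillary orthogonal data and the Type~C Lagrangian degeneracy data, and then verify that the two subvarieties of $Fl_n$ coincide. First, take $V = \mathcal{V}^{O_n}$ with the symplectic form $\langle \cdot, \cdot\rangle^-$ and set $p = 1$, so that the Type~C tautological subbundle $U$ is Lagrangian of rank $n$; identify $U$ with the graph $\mathcal{G} = G(\mathsf{f})$, which is Lagrangian with respect to $\langle \cdot, \cdot\rangle^-$ because $\mathsf{f}$ is symmetric.

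Next, I define the sequence $\mathbf{q}$ from the essential data $\{(i_t, j_t), k_t\}_{t=1}^s$ by
\[
q_k := i_t - j_t + 1 + k_t - k \qquad \text{for } k_{t-1} < k \leq k_t,
\]
with $k_0 = 0$. Since $\lambda_k = q_k$ when $p = 1$, this choice immediately yields $\lambda(p, \mathbf{q}) = \lambda^{O_n}(z)$ by direct comparison with the definition of $\lambda^{O_n}(z)$; the vexillary chain condition on the essential set forces $q_1 > q_2 > \cdots > q_{k_s} > 0$. I then refine the partial flag \eqref{eqn5.2} to an isotropic flag $F_\bullet$ of $V$ by
\[
F_{q_k} := \mathcal{E}_{j_t} \oplus \mathcal{E}_{i_t + (k_t - k)}^\perp \qquad \text{for } k_{t-1} < k \leq k_t,
\]
which has rank $n + 1 - q_k$ matching the Type~C convention and is isotropic because $j_t \leq i_t + (k_t - k)$.

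The key rank identity to verify is
\[
\dim\bigl(\mathcal{G} \cap (\mathcal{E}_j \oplus \mathcal{E}_i^\perp)\bigr) = j - \mathrm{rank}\bigl(E_j \xrightarrow{\mathsf{f}} E_i^\vee\bigr),
\]
proved by observing that projection to the first factor sends $\mathcal{G} \cap (\mathcal{E}_j \oplus \mathcal{E}_i^\perp)$ isomorphically onto $\ker(E_j \to E_i^\vee)$. Applied at $(i, j) = (i_t, j_t)$, this identifies the essential orbit condition with $\dim(U \cap F_{q_{k_t}}) \geq k_t$, and hence realizes $\Omega_\lambda^C$ as a closed subvariety of $X_z^{O_n}$, since the $\Omega_\lambda^C$ conditions at the indices $k = k_t$ recover the essential conditions defining $X_z^{O_n}$.

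The main obstacle is the reverse inclusion: on $X_z^{O_n}$, one must check that the intermediate conditions $\dim(U \cap F_{q_k}) \geq k$ for $k_{t-1} < k < k_t$ hold automatically. This is the Type~C analogue of the classical fact that a vexillary permutation's essential rank conditions control all others through a single partition. My plan is to argue by dimension: both $X_z^{O_n}$ and $\Omega_\lambda^C$ are irreducible of codimension $|\lambda^{O_n}(z)|$ in $Fl_n$ (the former from general theory of $K_n$-orbit closures, the latter from the standard Type~C degeneracy locus codimension count), so the inclusion of two irreducibles of equal dimension forces equality. An alternative route is to exploit the chain $(i_t,j_t) \preceq (i_{t+1},j_{t+1})$ of essential pairs to deduce each intermediate rank condition directly from the adjacent essential ones, paralleling the Type~A argument for vexillary permutations.
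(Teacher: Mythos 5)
Your dictionary ($V = \mathcal{V}^{O_n}$ with the symplectic form, $p = 1$, $U = \mathcal{G}$, $\mathbf{q}$ chosen so that $\lambda(1,\mathbf{q}) = \lambda^{O_n}(z)$) matches the paper's setup, and the rank identity you state is already built into the paper's definition of $X_z^{O_n}$ in \S 5.1, so no separate verification is needed there. However, the refinement you propose, $F_{q_k} = \mathcal{E}_{j_t}\oplus\mathcal{E}_{i_t+(k_t-k)}^\perp$ for $k_{t-1}<k\leq k_t$, is not the one the paper uses: the paper lowers the subspace index, $\mathcal{F}_{\mu_k} = \mathcal{E}_{j_t-(k_t-k)}\oplus\mathcal{E}_{i_t}^\perp$, while you raise the annihilator index. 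Each step drops rank by one and stays isotropic, but the two constructions give genuinely different subbundles at intermediate $k$, and whether the flag actually nests across the boundary between consecutive essential pairs is a nontrivial constraint: your choice needs $i_{t-1}-i_t \geq k_t - k_{t-1} - 1$ at each step, the paper's needs $j_t - j_{t-1} \geq k_t - k_{t-1} - 1$. The fact that $\lambda$ is a partition only controls the sum $(i_{t-1}-i_t)+(j_t-j_{t-1}) \geq k_t - k_{t-1} - 1$, not either summand separately. You check isotropy but never nesting, and there is no evident reason the vexillary chain condition delivers your inequality; the paper's flag is the one compatible with the construction cited from \cite{Brendan}, and it is also the one that the Chern-class substitutions in \S 5.2.1 (which feed Theorem 5.4) are tied to, so the two flags are not interchangeable downstream.

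For the reverse inclusion $X_z^{O_n}\subseteq\Omega_\lambda^C$, your primary route is a dimension count, which would require knowing in advance that $\Omega_\lambda^C$ is irreducible of codimension exactly $|\lambda|$ in $Fl_n$. Neither is automatic for a degeneracy locus, and establishing them is more work than the problem requires. The paper's argument is a one-liner that avoids all of this: consecutive members of the refined flag have corank one, so $\dim(\mathcal{G}\cap\mathcal{F}_{\mu_{k_t}})\geq k_t$ forces $\dim(\mathcal{G}\cap\mathcal{F}_{\mu_{k_t-1}})\geq k_t - 1$ pointwise, and iterating yields every intermediate condition from the essential ones. You mention this as an ``alternative route'' but leave it aside; in the paper it is the entire proof of the nontrivial inclusion.
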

\begin{proof}
Given the partial flag \eqref{eqn5.2} given by $\mathcal{F}_{\mu_{k_t}}=\mathcal{E}_{j_t}\oplus\mathcal{E}_{i_t}^\perp$, ${\mu_{k_t}}=i_t-j_t+1$ for $t=1,\ldots,s$, we get a refined isotropic flag 
\[
\mathcal{F}_{\mu_1'}\;\scalebox{1.2}{$\subset$}\;\mathcal{F}_{\mu_2'}\;\scalebox{1.2}{$\subset$}\; \cdots\;\scalebox{1.2}{$\subset$}\;\mathcal{F}_{\mu_{k_s}'}
\]
of subbundles of $\mathcal{V}^{O_n}$ by inserting 
$
\mathcal{F}_{\mu_{k_t-1}}=\mathcal{E}_{j_t-1}\oplus\mathcal{E}_{i_t}^\perp,
$
 $\mu_{k_t-1}=\mu_{k_t} +1$ between $\mathcal{F}_{\mu_{k_{t-1}}}$ and $\mathcal{F}_{\mu_{k_t}}$ whenever $k_t-k_{t-1}>1$. 
Since the condition $\mathrm{dim}(\mathcal{G}\cap \mathcal{F}_{\mu_{k_t}})\geq k_t$ for all $t=1,\ldots,s$ implies $\mathrm{dim}(\mathcal{G}\cap \mathcal{F}_{\mu_{k_t}-1})\geq k_t-1$, 
$X_z^{O_n}$ can be expressed as
\begin{equation}\label{eqn5.4}
\Omega_\lambda^C=\left\{x\in X\;|\;\mathrm{dim}(\mathcal{G}\cap \mathcal{F}_{\mu_i})|_x\geq i, i=1,\ldots, \ell\right\}.
\qedhere\end{equation}
\end{proof}

We denote by $\widetilde{X}_z^{O_n}$ the resolution of singularity for $X_z^{O_n}=\Omega_\lambda^C$ with a projection $\widetilde{X}_z^{O_n}\rightarrow X_z^{O_n}$ and the inclusion $\iota:\widetilde{X}_z^{O_n}\hookrightarrow X_s$ as in Theorem \ref{mainC}. 

It is known from \cite[\S 5.2]{Brendan} that $c(\mathcal{V}^{O_n}-\mathcal{F}_{\mu_t'}-\mathcal{G})=c(\mathcal{V}^{O_n})/(c(\mathcal{G})c(\mathcal{E}_{j_t}\oplus\mathcal{E}_{i_t}^\perp))=c(\mathcal{E}_{i_t}^\vee-\mathcal{E}_{j_t})$ as $c(\mathcal{G})$ becomes $1$.
For the notations described from the previous section, we let $c(k_t)=c(\mathcal{E}_{i_t}^\vee-\mathcal{E}_{j_t}),$ for $1\leq t\leq s$, and 
$
c(k)=c(\mathcal{E}_{i_t}^\vee-\mathcal{E}_{j_t-1})
$
where $i$ is the minimal so that $k_i\geq k$. 
Similarly, we let $T_y(k_t)=T_y(R_t\otimes(\mathcal{E}_{i_t}^\vee-\mathcal{E}_{j_t}))$ for $1\leq t\leq s$ and $T_y(k)=T_y(R_t\otimes(\mathcal{E}_{i_t}^\vee-\mathcal{E}_{j_t-1}))$ for $k_{i}\geq k>k_{i-1}$. Set $\ell=k_s$.  
Then we have the following theorem on the Motivic Hirzebruch classes.
\begin{thm}
Let $z\in I_n^{O_n}$ be vexillary with the partition $\lambda=\lambda^{O_n}(z)$. Let $X_z^{O_n}$ be the corresponding orthogonal orbit closure. 
The class $(\pi\iota)_*T_y(\widetilde{X}_z^{O_n})$ is given by 
\[
\prod_{j\leq i-1}\dfrac{T_y(R_i+R_{j})}{T_y(R_i-R_j)}\mathrm{Pf}_\lambda(d(1),\ldots,d(\ell))\scalebox{1.2}{$\cap$}\; T_y(Fl_n),
\]
where $d(i)=c(i)/T_y(i)$ if $\ell$ is even,
 and if $\ell$ is odd, then $d(i)=c(i)/T_y(i)$ for $1\leq i\leq \ell$, and $d(i)=1$ for $i=\ell+1$.
\end{thm}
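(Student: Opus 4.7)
The plan is to reduce the statement to the Lagrangian Grassmannian degeneracy locus formula already proved in the paper. The preceding lemma identifies $X_z^{O_n}$ with the isotropic degeneracy locus $\Omega_\lambda^C$ built from the refined partial flag of \eqref{eqn5.2}. In that identification, the general-setup parameters specialize to $p=1$ (so the ambient isotropic Grassmannian is Lagrangian), $V=\mathcal{V}^{O_n}$, $F_{q_i}=\mathcal{F}_{\mu_i'}$, $U=\mathcal{G}$, and the sequence $\mathbf{q}$ is the strict refinement $(\mu_1'>\mu_2'>\cdots>\mu_\ell')$ of the index sequence attached to the essential set. The essential-set inequality $j_t\le i_t$ coming from the orthogonal Rothe diagram forces $\mu_i'\ge i_t-j_t+1>0$, so the positive-$\mathbf q$ hypothesis of Corollary \ref{cor3.6} is met.

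Applying Corollary \ref{cor3.6} directly yields, when $\ell$ is even,
\[
(\pi\iota)_*T_y(\widetilde{X}_z^{O_n})=\prod_{j\le i-1}\frac{T_y(R_i+R_j)}{T_y(R_i-R_j)}\prod_{i=1}^{\ell}\frac{1}{T_y(R_i\otimes(\mathcal{V}^{O_n}-\mathcal{F}_{\mu_i'}-\mathcal{G}))}\,\mathrm{Pf}_\lambda(c(1),\ldots,c(\ell))\,\scalebox{1.2}{$\cap$}\,T_y(Fl_n),
\]
with $c(i)=c(\mathcal{V}^{O_n}-\mathcal{F}_{\mu_i'}-\mathcal{G})$. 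Because $\mathcal{G}=G(\mathsf{f})$ is a trivial graph bundle, $c(\mathcal{G})=1$, and Whitney multiplicativity collapses $c(\mathcal{V}^{O_n}-\mathcal{F}_{\mu_i'}-\mathcal{G})$ to either $c(\mathcal{E}_{i_t}^\vee-\mathcal{E}_{j_t})$ or $c(\mathcal{E}_{i_t}^\vee-\mathcal{E}_{j_t-1})$ depending on whether $i$ sits at an essential index $k_t$ or fills a gap of the refinement. This reproduces the $c(i)$ of the statement, and the analogous manipulation identifies $T_y(R_i\otimes(\mathcal{V}^{O_n}-\mathcal{F}_{\mu_i'}-\mathcal{G}))$ with the symbol $T_y(i)$.

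The final step will be to absorb the middle product $\prod_{i}1/T_y(i)$ into the Pfaffian. Since $R_i$ acts only on the $i$th argument of the Pfaffian kernel, and $T_y(R_i\otimes\cdots)$ is a formal power series in $R_i$ alone, the factor $1/T_y(i)$ commutes through the outer raising-operator prefactor $\prod T_y(R_i+R_j)/T_y(R_i-R_j)$ and attaches precisely to the entry $c(i)$ of $\mathrm{Pf}_\lambda$. This replaces $c(i)$ by $d(i)=c(i)/T_y(i)$, giving the even-$\ell$ formula. For odd $\ell$, Corollary \ref{cor3.6} already supplies an auxiliary $(\ell+1)$st symbol equal to $1$ to make the Pfaffian of even size, and since no $T_y(\ell+1)$ enters the outer product, the natural convention produces $d(\ell+1)=1$. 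The only step that is not purely formal is this last operator/Pfaffian bookkeeping — verifying that the $1/T_y(R_i\otimes\cdots)$ can be pulled inside the Schur Pfaffian compatibly with its built-in raising operators; once confirmed (using that distinct $R_i$'s commute and each acts on a distinct index), the theorem follows.
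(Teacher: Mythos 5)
Your proposal is correct and follows essentially the same route as the paper: the paper's proof is the one-line observation that the preceding lemma realizes $X_z^{O_n}$ as the Lagrangian degeneracy locus $\Omega_\lambda^C$ with all $\mu_i'>0$, so Corollary \ref{cor3.6} applies, with the identifications $c(\mathcal{V}^{O_n}-\mathcal{F}_{\mu_i'}-\mathcal{G})=c(\mathcal{E}_{i_t}^\vee-\mathcal{E}_{j_t})$ (since $c(\mathcal{G})=1$) and the absorption of the $1/T_y(R_i\otimes\cdot)$ factors into the Pfaffian entries handled in the notational setup before the theorem. Your additional bookkeeping (the $p=1$ specialization, positivity of the $\mu_i'$ from the essential-set inequalities, and the commutation of the $R_i$-series with the Pfaffian's raising operators) is exactly what the paper leaves implicit.
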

\begin{proof}
The statement follows from Corollary \ref{cor3.6}, as all $\mu_i$ in \eqref{eqn5.4} are positive. 
\end{proof}

 Then, as a corollary, we have the formulas for the CSM classes of the orthogonal orbit closures associated to the vexillary $z$. We recall $c_t(i):=\sum_{j\geq0}c(i)_jt^j$. 
\begin{cor}
Let $z\in I_n^{O_n}$ be vexillary, with $\lambda=\lambda^{O_n}(z)$. The CSM class $(\pi\iota)_*c_{SM}(\widetilde{X}_z^{O_n})$ is 
\[
\prod_{j\leq i-1}\dfrac{1+R_i+R_{j}}{1+R_i-R_j}\prod_{i=1}^\ell\dfrac{(1+R_i)^{(1-\mu_i)}}{c_{\frac{1}{1+R_i}}(i)}\mathrm{Pf}_\lambda(c(1),\ldots,c(\ell))\scalebox{1.2}{$\cap$}\; c_{SM}(Fl_n).
\]
\end{cor}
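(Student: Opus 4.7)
The plan is to obtain this corollary as a direct specialization of the preceding theorem at $y = -1$, combined with the Chern-class formula \eqref{ACTlem4.2} for the tensor product of a virtual bundle with a formal line bundle. First I recall from the introduction that $T_y|_{y=-1} = c_{SM}$ and that $Q_y(\alpha)|_{y=-1} = 1+\alpha$. Hence the outer factors of the preceding formula transform as $T_y(R_i + R_j) \mapsto 1 + R_i + R_j$, $T_y(R_i - R_j) \mapsto 1 + R_i - R_j$, and $T_y(Fl_n) \mapsto c_{SM}(Fl_n)$, giving the prefactor $\prod_{j \leq i-1}(1+R_i+R_j)/(1+R_i-R_j)$ and the cap product against $c_{SM}(Fl_n)$ in the target formula.

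Next I would specialize each entry $d(i) = c(i)/T_y(R_i \otimes (\mathcal{E}_{i_t}^\vee - \mathcal{E}_{j_t}))$ (or its refined analogue) at $y = -1$. Treating the raising operator $R_i$ as the first Chern class of a formal line bundle and applying \eqref{ACTlem4.2} to the virtual bundle whose Chern class defines $c(i)$, whose virtual rank is precisely $\mu_i - 1$ by the block-wise construction of $c(i)$ in the preceding subsection, I get
\[
T_{-1}(R_i \otimes (\mathcal{E}_{i_t}^\vee - \mathcal{E}_{j_t})) = (1+R_i)^{\mu_i - 1}\, c_{\frac{1}{1+R_i}}(i),
\]
and therefore $d(i)|_{y=-1} = \dfrac{(1+R_i)^{1-\mu_i}}{c_{\frac{1}{1+R_i}}(i)} \cdot c(i)$.

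Finally, since each raising operator $R_i$ acts only on the index of $c(i)$ and the $R_i$'s commute pairwise, the factor $(1+R_i)^{1-\mu_i}/c_{\frac{1}{1+R_i}}(i)$ — which is a formal series in $R_i$ alone, independent of the index of $c(i)$ or of $c(j)$ for $j \neq i$ — can be pulled out of the Pfaffian without altering its value. Pulling out these factors for $i = 1, \ldots, \ell$ produces the middle product $\prod_i (1+R_i)^{1-\mu_i}/c_{\frac{1}{1+R_i}}(i)$ while leaving $\mathrm{Pf}_\lambda(c(1), \ldots, c(\ell))$ untouched, yielding the stated expression.

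The only real obstacle is bookkeeping: one must check that for each index $i$ (whether $i = k_t$ or $k_{t-1} < i < k_t$), the corresponding bundle whose Chern class is $c(i)$ has virtual rank exactly $\mu_i - 1$, so that the exponent in \eqref{ACTlem4.2} is uniformly $\mu_i - 1$ across the range. This follows immediately from the definitions $c(k_t) = c(\mathcal{E}_{i_t}^\vee - \mathcal{E}_{j_t})$ and $c(k) = c(\mathcal{E}_{i_t}^\vee - \mathcal{E}_{j_t - 1})$ for $k_{t-1} < k < k_t$, together with the recorded identity $\mu_{k_t} = i_t - j_t + 1$ and $\mu_{k_t - 1} = \mu_{k_t} + 1$ from the refinement of the partial flag.
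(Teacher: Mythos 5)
Your proposal is correct and follows exactly the paper's own (one-line) proof: specialize $y=-1$ so that $T_{-1}=c_{SM}$ and $Q_{-1}(\alpha)=1+\alpha$, then apply \eqref{ACTlem4.2} with virtual rank $\mu_i-1$ to rewrite each $T_{-1}(R_i\otimes(\mathcal{E}_{i_t}^\vee-\mathcal{E}_{j_t}))$ as $(1+R_i)^{\mu_i-1}c_{\frac{1}{1+R_i}}(i)$. Your additional bookkeeping (the uniform virtual rank across refined indices, and pulling the $R_i$-series out of the Pfaffian using commutativity of the raising operators) is exactly what the paper leaves implicit, and it checks out.
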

\begin{proof}
By taking $y = -1$, we can directly apply \eqref{ACTlem4.2} to obtain the result.
\end{proof}

As before, the above class can be decomposed into strata of the orbit closures, but no stratum of codimension one in $X_z^{O_n}$ exists.
Here are some examples of the CSM class of orthogonal orbit closures associated to vexillary involutions. 
\begin{ex}
Let $X=Fl_2$ and consider the orthogonal orbit closure $X_z^{O_2}$ associated with $z=21\in I_2^{O_2}$. Then Ess$(D^{O_2}(z))=\{(1,1)\}$ and $\lambda^{O_2}(z)=(1)$. With $c(1)=c(\mathcal{E}_1^\vee-\mathcal{E}_1)$ and $c(2)=1$, we have
\begin{align*}
c_{SM}({X}_z^{O_2})&=\left((1-c_1(1)+c_2(1)+\cdots)c_1(1)\right)\scalebox{1.2}{$\cap$}\; c_{SM}(Fl_2)\\
&=[X_z^{O_2}]\scalebox{1.2}{$\cap$}\; c_{SM}(Fl_2).
\end{align*}
\end{ex}

\begin{ex}
Let $X=Fl_4$. We consider $X_z^{O_4}$ associated with $z=3412=(1\;3)(2\;4)$ in $I_4^{O_4}$. The corresponding partition is $\lambda^{O_4}(z)=(2,1)$, so that
we have
\begin{align*}
c_{SM}({X}_z^{O_4})&=\dfrac{1+R_2+R_1}{1+R_2-R_1}\dfrac{(1+R_1)^{-1}}{c_{\frac{1}{1+R_1}}(1)}\dfrac{1}{c_{\frac{1}{1+R_2}}(2)}(c_2(1)c_1(2)-2c_3(1))\scalebox{1.2}{$\cap$}\; c_{SM}(Fl_4)\\
&=\left([X_z^{O_4}]-3[X_{y}^{O_4}]\right)\scalebox{1.2}{$\cap$}\; c_{SM}(Fl_4).
\end{align*}
where $c(1)=c(\mathcal{E}_3^\vee-\mathcal{E}_2), c(2)=c(\mathcal{E}_2^\vee-\mathcal{E}_2)$, $y=4321=(14)(23)$ with $\lambda^{O_4}(y)=(3,1)$.

\end{ex}

\subsubsection{Symplectic orbit closures}\label{sec5.2.2}

Similarly, we convert the symplectic orbit closures into the even orthogonal degeneracy loci as in the following lemma. 

\begin{lemma} \label{lem5.6}
Let $z\in I_n^{Sp_n}$ be a vexillary involution, and the partition $\lambda=\lambda^{Sp_n}(z)$. The symplectic orbit closure $X_z^{Sp_n}$ can be realized as the even  orthogonal degeneracy locus $\Omega_{\lambda}^D$.
\end{lemma}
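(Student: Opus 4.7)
The plan is to mirror the proof of the preceding lemma for the orthogonal case, transposing the argument to the even orthogonal setting. Starting from the defining open locus
\[
(X_z^{Sp_n})^\circ = \left\{x \in X \;\middle|\; \dim\bigl(\mathcal{G} \cap (\mathcal{E}_{j_t} \oplus \mathcal{E}_{i_t}^\perp)\bigr) = k_t,\; t=1,\ldots,s\right\},
\]
I first refine the isotropic partial flag \eqref{eqn5.3} by inserting $\mathcal{F}_{\mu_{k_t}-1} := \mathcal{E}_{j_t-1} \oplus \mathcal{E}_{i_t}^\perp$ (of corank one inside $\mathcal{E}_{j_t} \oplus \mathcal{E}_{i_t}^\perp$) between consecutive terms whenever $k_t - k_{t-1} > 1$. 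Setting the refined flag to be $\mathcal{F}_{\mu_1'} \subset \cdots \subset \mathcal{F}_{\mu_{k_s}'}$ and using the elementary observation that $\dim(\mathcal{G} \cap \mathcal{F}_{\mu_{k_t}}) \geq k_t$ forces $\dim(\mathcal{G} \cap \mathcal{F}_{\mu_{k_t}-1}) \geq k_t - 1$, the orbit closure can be rewritten as
\[
X_z^{Sp_n} = \left\{x \in X \;\middle|\; \dim(\mathcal{G} \cap \mathcal{F}_{\mu_i}) \geq i,\; i=1,\ldots,k_s\right\}.
\]

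The decisive new feature compared to the orthogonal case is that $\mathcal{V}^{Sp_n}$ carries a non-degenerate symmetric form on a rank $2n$ bundle and $\mathcal{G} = G(\mathsf{f})$ is a maximal isotropic subbundle (of rank $n$) for this symmetric form, since $\mathsf{f}$ is skew-symmetric. Hence the natural ambient Grassmannian is the even orthogonal Grassmannian $OG(\mathcal{V}^{Sp_n})$, and the displayed locus is exactly of the shape defining the even orthogonal degeneracy locus $\Omega_\lambda^D$ discussed in \S\ref{sec4.4.2}. A short index bookkeeping using $\mu_{k_t} = i_t - j_t$ and the strictness conditions on the essential set then identifies the partition encoded by the refined flag with $\lambda^{Sp_n}_k = i_t - j_t + k_t - k$ for $k_{t-1} < k \leq k_t$, giving $\lambda = \lambda^{Sp_n}(z)$.

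The main obstacle I expect is matching the parity convention that distinguishes the two connected components of $OG(\mathcal{V}^{Sp_n})$. Because $G(\mathsf{f}) \cap \mathbb{C}^n = 0$ as subbundles of $\mathbb{C}^n \oplus (\mathbb{C}^n)^\vee$, the section $\mathcal{G}$ lies in the fixed component selected by the parity condition $\dim(\mathcal{G} \cap F_0) \equiv n \pmod 2$ (or the opposite, depending on the choice of reference maximal isotropic subbundle). Once this is aligned with the parity convention used in \eqref{eqn4.6} for $\widehat{\Omega}_\lambda^D$ in \S\ref{sec4.4.2} — which amounts to choosing the appropriate component $OG'(V_D)$ or $OG''(V_D)$ — the identification $X_z^{Sp_n} \cong \Omega_\lambda^D$ follows immediately, completing the proof.
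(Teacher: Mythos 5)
Your proposal is correct and follows essentially the same route as the paper's proof: refining the isotropic partial flag by inserting the corank-one subbundles $\mathcal{E}_{j_t-1}\oplus\mathcal{E}_{i_t}^\perp$, using the forced dimension drop of at most one to propagate the rank conditions, and identifying the resulting locus with $\Omega_\lambda^D$. The only additions in your write-up are the explicit parity bookkeeping for the connected components of $OG(\mathcal{V}^{Sp_n})$ and the verification of the partition $\lambda^{Sp_n}(z)$, both of which the paper leaves implicit (having set up the parity convention in \S\ref{sec4.4.2} and the partition in \S\ref{sec:vex}), so they are welcome but not a substantively different argument.
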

\begin{proof}
Let us fix an isotropic partial flag $\mathcal{F}_{\mu_{k_1}}\subset \mathcal{F}_{\mu_{k_2}}\subset\cdots\subset \mathcal{F}_{\mu_{k_s}}$ of subbundles on $X$, as defined in \eqref{eqn5.3}, where $\mathcal{F}_{\mu_{k_t}}=\mathcal{E}_{j_t}\oplus\mathcal{E}_{i_t}^\perp$ and $\mu_{k_t}=i_t-j_t$ for $t=1,\ldots,s$.

The remainder of the proof follows the same argument as in the orthogonal case. We refine the partial flag by inserting intermediate isotropic subbundles 
$
\mathcal{F}_{\mu_{k_t-1}}=\mathcal{E}_{j_t-1}\oplus\mathcal{E}_{i_t}^\perp
$ whenever $k_t-k_{t-1}>1$, yielding a refined flag 
\begin{equation}\label{eqn5.5}
\mathcal{F}_{\mu_1’}\;\scalebox{1.2}{$\subset$}\;\mathcal{F}_{\mu_2’}\;\scalebox{1.2}{$\subset$}\; \cdots\;\scalebox{1.2}{$\subset$}\;\mathcal{F}_{\mu_{k_s}’}.
\end{equation} 
The condition $\mathrm{dim}(\mathcal{G}\cap \mathcal{F}_{\mu_{k_t}})= k_t$ ensures that $\mathrm{dim}(\mathcal{G}\cap \mathcal{F}_{\mu_{k_t}-1})= k_t-1$ holds for any point $x\in X$, and the same implication holds for the closures of the corresponding degeneracy loci. Hence, the desired conclusion follows.
\end{proof}

Due to Lemma \ref{lem5.6}, we can regard $X_z^{Sp_n}$ as $\Omega_\lambda^D$ and consider the natural projection map $\phi_{Sp_n}:\widehat{X}_z^{Sp_n}=\widehat{\Omega}_\lambda^D\rightarrow X_z^{Sp_n}$. The fiber of $\phi_{Sp_n}$ over a point $x\in X=Fl_n$ is the closure of
\[
\left\{{\mathscr{G}}\in OG'(\mathcal{V}^{Sp_n})\;|\;\mathrm{dim}(\mathscr{G}\cap \mathcal{F}_{\mu_t'})= t\;\text{for }t=1,\ldots,\ell\right\}
\]
where $k_h=j_h-\mathrm{rk}(z_{[i_h][j_h]})$. Recall that $\mathscr{G}\in\widehat{X}_z^{Sp_n}$ and $\mathbb{C}^n$ are in the same component of $OG(\mathcal{V}^{Sp_n})$.

Let $c(k_t)=c(\mathcal{E}_{i_t}^\vee-\mathcal{E}_{j_t})$ for each $1\leq t\leq s$. For a given $k$, we let
$
c(k)=c(\mathcal{E}_{i_t}^\vee-\mathcal{E}_{j_t-1})
$
where $t$ is the smallest index such that $k_t\geq k$. 
Similarly, we denote $T_y(k_t)=T_y(R_t\otimes(\mathcal{E}_{i_t}^\vee-\mathcal{E}_{j_t}))$ for $1\leq t\leq s$ and $T_y(k)=T_y(R_t\otimes(\mathcal{E}_{i_t}^\vee-\mathcal{E}_{j_t-1}))$ for $k_{t}\geq k>k_{t-1}$. Letting $\ell:=k_s$, we state the following theorem on the Motivic Hirzebruch classes.

\begin{thm}\label{thm5.7}
Let $z\in I_n^{Sp_n}$ be vexillary with the partition $\lambda=\lambda^{Sp_n}(z)$. Let $X_z^{Sp_n}$ be the corresponding symplectic orbit closure. 
Then the class $(\pi\iota)_*T_y(\widehat{X}_z^{Sp_n})$ is isomorphic to
\[
\dfrac{1}{2^\ell}\prod_{j\leq i-1}\dfrac{T_y(R_i+R_{j})}{T_y(R_i-R_j)}\prod_{i=1}^\ell \left(T_y(R_i)\right)^2\mathrm{Pf}_\lambda(d(1),\ldots,d(\ell))\scalebox{1.2}{$\cap$}\; T_y(Fl_n),
\]
where $d(i)=c(i)/T_y(i)$ if $\ell$ is even,
 and if $\ell$ is odd, then $d(i)=c(i)/T_y(i)$ for $1\leq i\leq \ell$, and $d(\ell+1)=1$.
\end{thm}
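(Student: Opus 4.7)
The plan is to reduce the claim to Corollary \ref{cor4.5}, which already computes the Hirzebruch class of a resolution of an odd orthogonal degeneracy locus when all the indices $q_i$ are positive, and then to repackage that formula into the stated form. First I would invoke Lemma \ref{lem5.6} to identify $X_z^{Sp_n}$ with the even orthogonal degeneracy locus $\Omega_\lambda^D$ associated to the refined isotropic flag \eqref{eqn5.5}; since all the resulting $\mu_i'$ are positive, the setting matches Corollary \ref{cor4.5} with $a=\ell$ and $\rho_i = i-1$. Next, Lemma \ref{eqn4.8} (the even-to-odd comparison) gives
\[
(\pi\iota)_*T_y(\widehat{X}_z^{Sp_n}) = (\pi_B\iota_B)_*T_y(\widehat{\Omega}_\lambda^B),
\]
so that Corollary \ref{cor4.5} provides a closed form for the left hand side, with entries $d_{4.5}(i) = c(V-F_{q_i}-U-M)/T_y(R_i\otimes(V-F_{q_i}-U-M))$.

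The final step is to rewrite this in the shape of Theorem \ref{thm5.7}. The key simplification is $c_1(M) = 0$ in $A_*(X)[\tfrac{1}{2}]$, which gives $c(M) = 1$ and $T_y(R_i\otimes M) = T_y(R_i)$, so that
\[
d_{4.5}(i) = T_y(R_i)\cdot\frac{c(V-F_{q_i}-U)}{T_y(R_i\otimes(V-F_{q_i}-U))} = T_y(R_i)\cdot d(i),
\]
using the identification $c(V-F_{q_i}-U) = c(\mathcal{E}_{i_t}^\vee-\mathcal{E}_{j_t}) = c(i)$, exactly as in the orthogonal case treated just above. Because each $T_y(R_j)$ is a power series in $R_j$ alone, it commutes with the Chern symbols $c(m)_r$ for $m\neq j$ and with the rational prefactors $(1\pm R_k/R_i)$ entering the Pfaffian (i.e., the theta polynomial with $\rho_i = i-1$). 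Hence the product $\prod_i T_y(R_i)$ can be extracted outside the Pfaffian, and combining it with the $\prod_i T_y(R_i)$ already present in Corollary \ref{cor4.5} yields $\prod_i (T_y(R_i))^2$. The prefactor $\tfrac{1}{2^\ell}$ is inherited directly from $\tfrac{1}{2^a}$ with $a=\ell$.

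The hard part will be to justify cleanly that the column-wise scalars $T_y(R_i)$ commute past the raising-operator prefactors so that they can be pulled outside the Pfaffian. The simplification $c_1(M) = 0$ is what makes this work: it allows the $M$-contribution to the $T_y$-class to separate as a pure scalar series $T_y(R_i)$, rather than remaining entangled with $V-F_{q_i}-U$. A minor secondary point is the parity of $\ell$: for odd $\ell$ one must adjoin a trivial column $d(\ell+1)=1$ to make the Pfaffian well-defined, as in Corollary \ref{cor3.6}, and this parity convention carries over unchanged.
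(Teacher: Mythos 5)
Your proposal is correct and follows essentially the same route as the paper: Lemma \ref{lem5.6} identifies $X_z^{Sp_n}$ with the even orthogonal degeneracy locus, Lemma \ref{eqn4.8} transfers the computation to the odd orthogonal case, Corollary \ref{cor4.5} gives the closed form (all $\mu_i'>0$, so $a=\ell$ and $\rho_i=i-1$), and the reduction $c_1(M)=0$ splits each entry as $T_y(R_i)\cdot d(i)$ so that the extra factor $\prod_i T_y(R_i)$ can be pulled outside the Pfaffian and combined with the one already present. The only cosmetic difference is the order in which you invoke Lemma \ref{eqn4.8} relative to the pullback under $\eta$; both lead to the same identity, and your observation that the commuting raising operators justify extracting the $T_y(R_i)$ from the Pfaffian is exactly the point the paper leaves implicit.
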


\begin{proof}
As discussed above from Lemma \ref{lem5.6}, $\widehat{X}_z^{Sp_n}$ can be identified with $\widehat{\Omega}_\lambda^D$ in the even orthogonal case. We pullback this locus under the map $\eta$ in \S\ref{sec4.4.2} to get the odd orthogonal degeneracy locus
$
\widehat{\Omega}_\lambda^B=\eta^{-1}\widehat{\Omega}_\lambda^D
$
given by
\[
\{\mathcal{S}\in OG(\mathcal{V}_B)\;|\;\mathrm{dim}(\mathcal{S}\cap \mathcal{F}_{\mu_t'})\geq t\;\text{for}\; t=1,\ldots,\ell\}
\]
where $\mathcal{V}_B=\eta^{-1}\mathcal{V}^{Sp_n}$ is a vector bundle of rank $2n-1$, equipped with the symmetric form and $\mathcal{S}=\eta^{-1}(\mathcal{G})$ of rank $n-1$ on $X$. 

We then apply Corollary \ref{cor4.5}, using the fact that $\mu_i'>0$ for all $i$, to get the motivic Hirzebruch class 
\[
\dfrac{1}{2^\ell}\prod_{j\leq i-1}\dfrac{T_y(R_i+R_{j})}{T_y(R_i-R_j)}\prod_{i=1}^\ell T_y(R_i)\mathrm{Pf}_\lambda(\widetilde{d}(1),\ldots,\widetilde{d}(\ell))\scalebox{1.2}{$\cap$}\; T_y(X),
\]
where
$
\widetilde{d}(i)=c(\mathcal{V}_B-\mathcal{F}_{\mu_i'}-\mathcal{S}-\mathcal{M})/T_y(R_i\otimes(\mathcal{V}_B-\mathcal{F}_{\mu_i'}-\mathcal{S}-\mathcal{M}))
$ for $i=1,\ldots,\ell$. Here $\mathcal{M}$ is the line bundle such that $\mathcal{M}\cong \mathcal{F}^\perp/\mathcal{F}$ for some maximal isotropic subbundle $\mathcal{F}$ of $\mathcal{V}_B$. 

Furthermore, we know that $c_1(\mathcal{M})=0$ so that $c(\mathcal{V}_B-\mathcal{F}_{\mu_i'}-\mathcal{S}-\mathcal{M})=c(\mathcal{V}_B-\mathcal{F}_{\mu_i'}-\mathcal{S})$ and $1/T_y(R_i\otimes(\mathcal{V}_B-\mathcal{F}_{\mu_i'}-\mathcal{S}-\mathcal{M}))=T_y(R_i\otimes\mathcal{M})/T_y(R_i\otimes(\mathcal{V}_B-\mathcal{F}_{\mu_i'}-\mathcal{S}))$.

Altogether, using Lemma \ref{eqn4.8}, we arrive at the statement. 
\end{proof}

 Then, as a corollary, we have the formulas for the CSM classes of the orthogonal orbit closures associated to the vexillary $z$. We recall $c_t(i):=\sum_{j\geq0}c(i)_jt^j$. 
\begin{cor}
Let $z\in I_n^{Sp_n}$ be vexillary. The CSM class $(\pi\iota)_*c_{SM}(\widehat{X}_z^{Sp_n})$ is isomorphic to
\[
\dfrac{1}{2^\ell}\prod_{j\leq i-1}\dfrac{1+R_i+R_{j}}{1+R_i-R_j}\prod_{i=1}^\ell\dfrac{(1+R_i)^{(2-\mu_i')}}{c_{\frac{1}{1+R_i}}(i)}\mathrm{Pf}_\lambda(c(1),\ldots,c(\ell))\scalebox{1.2}{$\cap$}\; c_{SM}(Fl_n)
\]
\end{cor}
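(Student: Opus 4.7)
The plan is to derive this corollary as the $y = -1$ specialization of Theorem \ref{thm5.7}, using that $T_{-1}$ coincides with the Chern--Schwartz--MacPherson class transformation, as recalled in the introduction. The formula of Theorem \ref{thm5.7} is rational in the building blocks $T_y(R_i \pm R_j)$, $T_y(R_i)$, and $T_y(R_i \otimes \mathscr{E}_i)$, with Chern-class arguments $c(i)$ that are independent of $y$, so the specialization is essentially a factor-by-factor substitution followed by a rewriting in the required form.

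First I would record $Q_{-1}(\alpha) = 1 + \alpha$, which follows from $\alpha(1+y)/(1 - e^{-\alpha(1+y)}) \to 1$ as $y \to -1$. This yields $T_{-1}(R_i \pm R_j) = 1 + R_i \pm R_j$ and $(T_{-1}(R_i))^2 = (1 + R_i)^2$, reproducing the outer prefactor of the corollary. Second, for each denominator $T_y(R_i \otimes \mathscr{E}_i)$ of $d(i)$, setting $y = -1$ gives $c(\mathscr{E}_i \otimes L)$ for the formal line bundle $L$ with $c_1(L) = R_i$; applying equation \eqref{ACTlem4.2} rewrites this as
\[
T_{-1}(R_i \otimes \mathscr{E}_i) = (1 + R_i)^{\mathrm{rk}(\mathscr{E}_i)} \, c_{1/(1+R_i)}(\mathscr{E}_i).
\]
Combining with the $(1+R_i)^2$ contribution produces a net $(1+R_i)^{2 - \mathrm{rk}(\mathscr{E}_i)}$ factor; since both $(1+R_i)^{2-\mathrm{rk}(\mathscr{E}_i)}$ and $c_{1/(1+R_i)}(i)^{-1}$ depend only on the $i$-th raising operator, they can be pulled out of $\mathrm{Pf}_\lambda$, leaving $\mathrm{Pf}_\lambda(c(1),\ldots,c(\ell))$ with the intended Chern-class arguments.

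The remaining step, and the main place requiring care, is the rank identity $\mathrm{rk}(\mathscr{E}_i) = \mu_i'$. Tracking the bundles through the proof of Theorem \ref{thm5.7}---the realization of $X_z^{Sp_n}$ as $\Omega_\lambda^D$ via Lemma \ref{lem5.6} together with the $\eta^{-1}$ pullback of Section \ref{sec4.4.2}---and using the convention that $\mathrm{rk}(\mathcal{F}_{\mu_i'}) = n - \mu_i'$ in the symplectic case (from Section \ref{sec:vex}), one finds $\mathrm{rk}(\mathscr{E}_i) = 2n - (n - \mu_i') - n = \mu_i'$, producing the exponent $2 - \mu_i'$ of the corollary. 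The difference by one from the orthogonal corollary's exponent $1 - \mu_i$ is a sanity check: it corresponds exactly to the extra $(T_y(R_i))^2$ factor that distinguishes Theorem \ref{thm5.7} from its orthogonal analogue. With $T_{-1}(Fl_n) = c_{SM}(Fl_n)$, the substitution yields the stated formula.
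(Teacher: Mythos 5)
Your proposal is correct and follows exactly the paper's route: the paper's proof is simply "specialize $y=-1$ in Theorem \ref{thm5.7} and apply \eqref{ACTlem4.2}," and your factor-by-factor substitution, the use of $Q_{-1}(\alpha)=1+\alpha$, and the rank bookkeeping giving the exponent $2-\mu_i'$ are precisely the details the paper leaves implicit.
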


\begin{proof}
Specializing $y$ to $-1$ from Theorem \ref{thm5.7} and using \eqref{ACTlem4.2} leads to the desired result.
\end{proof}

We end this section with the following examples of the CSM classes of the symplectic orbit closures associated to vexillary involutions.
\begin{ex}
Let $X=Fl_5$. We take ${X}_z^{Sp_5}$ associated with $z=43215=(1\;4)(2\;3)\in I_5^{Sp_5}$. Then Ess$(D^{Sp_5}(z))=\{(3,1)\}$ and $\lambda^{Sp_5}(z)=(2)$. For $c(1)=c(\mathcal{E}_3^\vee-\mathcal{E}_1)$ and $c(2)=1$, one has
\begin{align*}
c_{SM}({X}_z^{Sp_5})&=\dfrac{1}{2}\left((1- c_1(1) +c_2(1) + c_1(1)R_1+\cdots)c_2(1)\right)\scalebox{1.2}{$\cap$}\; c_{SM}(Fl_5)\\
&=\left([X_z^{Sp_5}]-2[X_{y_1}^{Sp_5}]-2[X_{y_2}^{Sp_5}]+6[X_{y_3}^{Sp_5}]\right)\scalebox{1.2}{$\cap$}\; c_{SM}(Fl_5),
\end{align*}
where $y_1=(1\;4)$, $y_2=(1\;4)(2\;5)$ and $y_3=(1\;5)(2\;4)$.
\end{ex}

\begin{ex}
Let $X=Fl_6$ and ${X}_z^{Sp_6}$ be the symplectic orbit closure associated with $z=543216=(1\;5)(2\;4)\in I_6^{Sp_6}$. Then the corresponding partition becomes
$\lambda^{Sp_6}(z)=(3,1)$, and 
we have
\begin{align*}
c_{SM}({X}_z^{Sp_6})&=\dfrac{1}{2^2}\left(\dfrac{1+R_2+R_1}{1+R_2-R_1}\dfrac{(1+R_1)^{-1}}{c_{\frac{1}{1+R_1}}(1)}\dfrac{(1+R_2)^{-3}}{c_{\frac{1}{1+R_2}}(2)}(c_3(1)c_1(2)-2c_4(1))\right)\scalebox{1.2}{$\cap$}\; c_{SM}(Fl_6)\\
&=\left([X_z^{Sp_6}]-3[X_{y_1}^{Sp_6}]-3[X_{y_2}^{Sp_6}]+11[X_{y_3}^{Sp_6}]\right)\scalebox{1.2}{$\cap$}\; c_{SM}(Fl_6),
\end{align*}
where $c(1)=c(\mathcal{E}_4^\vee-\mathcal{E}_1), c(2)=c(\mathcal{E}_3^\vee-\mathcal{E}_2)$, $y_1=(1\;5)(2\;6), y_2=(1\;6)(2\;4)$ and $y_3=(1\;6)(2\;5)$.
\end{ex}

\begin{ack}
The author wishes to express our gratitude to David Anderson for his insightful comments, which are incorporated in this work. We also thank anonymous referees for carefully reviewing our manuscript and for the many helpful suggestions that greatly improved the exposition. The author is supported by the Institute for Basic Science (IBS-R032-D1) and was partially supported by AMS-Simons Travel Grant. 
\end{ack}

\begin{bibdiv}
\begin{biblist}

\bib{AM16}{article}{
      author={Aluffi, Paolo},
      author={Mihalcea, Leonardo~C.},
       title={Chern-{S}chwartz-{M}ac{P}herson classes for {S}chubert cells in
  flag manifolds},
        date={2016},
     journal={Compos. Math.},
      volume={152},
      number={12},
       pages={2603\ndash 2625},
}

\bib{AMSS22}{article}{
      author={Aluffi, Paolo},
      author={Mihalcea, Leonardo~C.},
      author={Sch\"urmann, J\"org},
      author={Su, Changjian},
       title={From motivic chern classes of schubert cells to their hirzebruch
  and csm class},
        date={2022},
     journal={preprint, arxiv:2212.12509},
}

\bib{AMSS23}{article}{
      author={Aluffi, Paolo},
      author={Mihalcea, Leonardo~C.},
      author={Sch\"urmann, J\"org},
      author={Su, Changjian},
       title={Shadows of characteristic cycles, {V}erma modules, and positivity
  of {C}hern-{S}chwartz-{M}ac{P}herson classes of {S}chubert cells},
        date={2023},
     journal={Duke Math. J.},
      volume={172},
      number={17},
       pages={3257\ndash 3320},
}

\bib{AMSS24}{article}{
      author={Aluffi, Paolo},
      author={Mihalcea, Leonardo~C.},
      author={Sch\"urmann, J\"org},
      author={Su, Changjian},
       title={Motivic {C}hern classes of {S}chubert cells, {H}ecke algebras,
  and applications to {C}asselman's problem},
        date={2024},
     journal={Ann. Sci. \'Ec. Norm. Sup\'er. (4)},
      volume={57},
      number={1},
       pages={87\ndash 141},
}

\bib{ACT}{article}{
      author={Anderson, Dave},
      author={Chen, Linda},
      author={Tarasca, Nicola},
       title={Motivic classes of degeneracy loci and pointed {B}rill-{N}oether
  varieties},
        date={2022},
     journal={J. Lond. Math. Soc. (2)},
      volume={105},
      number={3},
       pages={1787\ndash 1822},
}

\bib{A}{article}{
      author={Anderson, David},
       title={{$K$}-theoretic {C}hern class formulas for vexillary degeneracy
  loci},
        date={2019},
     journal={Adv. Math.},
      volume={350},
       pages={440\ndash 485},
}

\bib{AF}{article}{
      author={Anderson, David},
      author={Fulton, William},
       title={Chern class formulas for classical-type degeneracy loci},
        date={2018},
     journal={Compos. Math.},
      volume={154},
      number={8},
       pages={1746\ndash 1774},
}

\bib{Kaz}{article}{
      author={Aomoto, Kazuhiko},
       title={On some double coset decompositions of complex semisimple {L}ie
  groups},
        date={1966},
     journal={J. Math. Soc. Japan},
      volume={18},
       pages={1\ndash 44},
}

\bib{BSY}{article}{
      author={Brasselet, Jean-Paul},
      author={Sch\"urmann, J\"org},
      author={Yokura, Shoji},
       title={Hirzebruch classes and motivic {C}hern classes for singular
  spaces},
        date={2010},
     journal={J. Topol. Anal.},
      volume={2},
      number={1},
       pages={1\ndash 55},
}

\bib{Ful}{book}{
      author={Fulton, William},
       title={Intersection theory},
     edition={Second},
      series={Ergebnisse der Mathematik und ihrer Grenzgebiete. 3. Folge. A
  Series of Modern Surveys in Mathematics [Results in Mathematics and Related
  Areas. 3rd Series. A Series of Modern Surveys in Mathematics]},
   publisher={Springer-Verlag, Berlin},
        date={1998},
      volume={2},
}

\bib{GV89}{article}{
      author={Gessel, I.~M.},
      author={Viennot, X.~G.},
       title={Determinants, paths, and plane partitions},
        date={1989},
     journal={Preprint},
       pages={23},
}

\bib{GJL}{article}{
      author={Graham, William},
      author={Jeon, Minyoung},
      author={Larson, Scott},
       title={Irreducible characteristic cycles for orbit closures of a
  symmetric subgroup},
     journal={In preparation.},
}

\bib{HIMN}{article}{
      author={Hudson, Thomas},
      author={Ikeda, Takeshi},
      author={Matsumura, Tomoo},
      author={Naruse, Hiroshi},
       title={Degeneracy loci classes in {$K$}-theory---determinantal and
  {P}faffian formula},
        date={2017},
     journal={Adv. Math.},
      volume={320},
       pages={115\ndash 156},
}

\bib{HM}{article}{
      author={Hudson, Thomas},
      author={Matsumura, Tomoo},
       title={Vexillary degeneracy loci classes in {$K$}-theory and algebraic
  cobordism},
        date={2018},
     journal={European J. Combin.},
      volume={70},
       pages={190\ndash 201},
}

\bib{Huh16}{article}{
      author={Huh, June},
       title={Positivity of {C}hern classes of {S}chubert cells and varieties},
        date={2016},
     journal={J. Algebraic Geom.},
      volume={25},
      number={1},
       pages={177\ndash 199},
}

\bib{IMN}{article}{
      author={Ikeda, Takeshi},
      author={Mihalcea, Leonardo~C.},
      author={Naruse, Hiroshi},
       title={Factorial {$P$}- and {$Q$}-{S}chur functions represent
  equivariant quantum {S}chubert classes},
        date={2016},
     journal={Osaka J. Math.},
      volume={53},
      number={3},
       pages={591\ndash 619},
}

\bib{Jones}{article}{
      author={Jones, Benjamin~F.},
       title={Singular {C}hern classes of {S}chubert varieties via small
  resolution},
        date={2010},
     journal={Int. Math. Res. Not. IMRN},
      number={8},
       pages={1371\ndash 1416},
}

\bib{KL}{article}{
      author={Kempf, G.},
      author={Laksov, D.},
       title={The determinantal formula of {S}chubert calculus},
        date={1974},
     journal={Acta Math.},
      volume={132},
       pages={153\ndash 162},
}

\bib{MP}{article}{
      author={Marberg, Eric},
      author={Pawlowski, Brendan},
       title={{$K$}-theory formulas for orthogonal and symplectic orbit
  closures},
        date={2020},
     journal={Adv. Math.},
      volume={372},
       pages={107299, 43},
}

\bib{T79}{article}{
      author={Matsuki, Toshihiko},
       title={The orbits of affine symmetric spaces under the action of minimal
  parabolic subgroups},
        date={1979},
     journal={J. Math. Soc. Japan},
      volume={31},
      number={2},
       pages={331\ndash 357},
}

\bib{PP95}{article}{
      author={Parusi\'nski, Adam},
      author={Pragacz, Piotr},
       title={Chern-{S}chwartz-{M}ac{P}herson classes and the {E}uler
  characteristic of degeneracy loci and special divisors},
        date={1995},
     journal={J. Amer. Math. Soc.},
      volume={8},
      number={4},
       pages={793\ndash 817},
}

\bib{Brendan}{article}{
      author={Pawlowski, Brendan},
       title={Universal graph {S}chubert varieties},
        date={2021},
     journal={Transform. Groups},
      volume={26},
      number={4},
       pages={1417\ndash 1461},
}

\bib{PR22}{incollection}{
      author={Promtapan, Sutipoj},
      author={Rim\'anyi, Rich\'ard},
       title={Characteristic classes of symmetric and skew-symmetric degeneracy
  loci},
        date={2022},
   booktitle={Facets of algebraic geometry. {V}ol. {II}},
      series={London Math. Soc. Lecture Note Ser.},
      volume={473},
   publisher={Cambridge Univ. Press, Cambridge},
       pages={254\ndash 283},
}

\bib{RS90}{article}{
      author={Richardson, R.~W.},
      author={Springer, T.~A.},
       title={The {B}ruhat order on symmetric varieties},
        date={1990},
     journal={Geom. Dedicata},
      volume={35},
      number={1-3},
       pages={389\ndash 436},
}

\bib{Tam11}{article}{
      author={Tamvakis, Harry},
       title={Giambelli, {P}ieri, and tableau formulas via raising operators},
        date={2011},
     journal={J. Reine Angew. Math.},
      volume={652},
       pages={207\ndash 244},
}

\bib{Tanisaki}{incollection}{
      author={Tanisaki, Toshiyuki},
       title={Holonomic systems on a flag variety associated to
  {H}arish-{C}handra modules and representations of a {W}eyl group},
        date={1985},
   booktitle={Algebraic groups and related topics ({K}yoto/{N}agoya, 1983)},
      series={Adv. Stud. Pure Math.},
      volume={6},
   publisher={North-Holland, Amsterdam},
       pages={139\ndash 154},
}

\bib{Jose}{article}{
      author={Wolf, Joseph~A.},
       title={The action of a real semisimple group on a complex flag manifold.
  {I}. {O}rbit structure and holomorphic arc components},
        date={1969},
     journal={Bull. Amer. Math. Soc.},
      volume={75},
       pages={1121\ndash 1237},
}

\bib{WY}{article}{
      author={Wyser, B.},
      author={Yong, A.},
       title={Polynomials for symmetric orbit closures in the flag variety},
        date={2017},
     journal={Transform. Groups},
      volume={22},
      number={1},
       pages={267\ndash 290},
}

\bib{Wyser}{article}{
      author={Wyser, Benjamin~J.},
       title={{$K$}-orbit closures on {$G/B$} as universal degeneracy loci for
  flagged vector bundles with symmetric or skew-symmetric bilinear form},
        date={2013},
     journal={Transform. Groups},
      volume={18},
      number={2},
       pages={557\ndash 594},
}

\bib{Yok}{incollection}{
      author={Yokura, Shoji},
       title={A singular {R}iemann-{R}och for {H}irzebruch characteristics},
        date={1998},
   booktitle={Singularities {S}ymposium---\l ojasiewicz 70 ({K}rak\'ow, 1996;
  {W}arsaw, 1996)},
      series={Banach Center Publ.},
      volume={44},
   publisher={Polish Acad. Sci. Inst. Math., Warsaw},
       pages={257\ndash 268},
}

\bib{Zhang18}{article}{
      author={Zhang, Xiping},
       title={Chern classes and characteristic cycles of determinantal
  varieties},
        date={2018},
     journal={J. Algebra},
      volume={497},
       pages={55\ndash 91},
}

\end{biblist}
\end{bibdiv}
\end{document}